\newenvironment{keywords}
{\bgroup\leftskip 20pt\rightskip 20pt \small\noindent{\bf Keywords:} }%
{\par\egroup\vskip 0.25ex}
\newcommand{\BlackBox}{\rule{1.5ex}{1.5ex}}  
\newenvironment{proof}{\par\noindent{\bf Proof\ }}{\hfill\BlackBox\\[2mm]}
\newtheorem{example}{Example} 
\newtheorem{theorem}{Theorem}
\newtheorem{lemma}[theorem]{Lemma} 
\newtheorem{proposition}[theorem]{Proposition}
\newcommand{\egaldef}{:=} 
\newcommand{\un}{\mathbf{1}} 
\newcommand{\R}{\mathbb{R}} 
\newcommand{\N}{\mathbb{N}} 
\renewcommand{\H}{\mathcal{H}}
\newcommand{\X}{\mathcal{X}}
\newcommand{\F}{\mathcal{F}}
\newcommand{\cT}{\mathcal{T}}
\newcommand{\cH}{\mathcal{H}}
\newcommand{\card}{\mathrm{Card}} 
\newcommand{\argmin}{\mathrm{argmin}} 
\newcommand{\paren}[1]{\left(  #1 \right)} 
\newcommand{\parenj}[1]{\mathopen{}\left( #1  \right) \mathclose{}} 
\newcommand{\parens}[1]{( #1 )} 
\newcommand{\sparen}[1]{( #1 )} 
\newcommand{\parenb}[1]{\bigl( #1 \bigr)}
\newcommand{\croch}[1]{\left[ #1 \right]} 
\newcommand{\crochj}[1]{\mathopen{}\left[ #1 \right] \mathclose{}} 
\newcommand{\crochs}[1]{[ #1 ]} 
\newcommand{\scroch}[1]{[ #1 ]} 
\newcommand{\crochb}[1]{\bigl[ #1 \bigr]}
\newcommand{\crochB}[1]{\Bigl[ #1 \Bigr]}
\newcommand{\crochBb}[1]{\Biggl[ #1 \Biggr]}
\newcommand{\set}[1]{\left\{ #1 \right\}}
\newcommand{\acc}[1]{\set{#1}}
\newcommand{\setj}[1]{\mathopen{}\left\{ #1 \right\} \mathclose{}}
\newcommand{\sets}[1]{\{ #1 \}}
\newcommand{\sset}[1]{\{ #1 \}}
\newcommand{\setb}[1]{\bigl\{ #1 \bigr\}}
\newcommand{\absj}[1]{\mathopen{} \left\lvert #1 \right\rvert \mathclose{}} 
\newcommand{\abss}[1]{\lvert #1 \rvert}
\newcommand{\absb}[1]{\bigl\lvert #1 \bigr\rvert}
\newcommand{\absB}[1]{\Bigl\lvert #1 \Bigr\rvert}
\newcommand{\norm}[1]{\left \lVert #1 \right\rVert}
\newcommand{\norms}[1]{\lVert #1 \rVert}
\newcommand{\snorm}[1]{\norms{#1}}
\newcommand{\normb}[1]{\bigl \lVert #1 \bigr\rVert}
\newcommand{\normBb}[1]{\Biggl \lVert #1 \Biggr\rVert}
\newcommand{\normH}[1]{\norm{#1}_{\H}}
\newcommand{\normHs}[1]{\norms{#1}_{\H}}
\newcommand{\normHb}[1]{\normb{#1}_{\H}}
\newcommand{\normHBb}[1]{\normBb{#1}_{\H}}
\newcommand{\snormH}[1]{\snorm{#1}_{\H}}
\newcommand{\prodscal}[2]{\left\langle #1 , \, #2 \right\rangle} 
\newcommand{\prodscals}[2]{\langle #1 , \, #2 \rangle} 
\newcommand{\prodscalb}[2]{\bigl\langle #1 , \, #2 \bigr\rangle}
\newcommand{\prodH}[2]{\prodscal{#1}{#2}_{\H}} 
\newcommand{\prodHb}[2]{\prodscalb{#1}{#2}_{\H}} 
\renewcommand{\P}{\mathbb{P}}
\newcommand{\E}{\mathbb{E}} 
\newcommand{\var}{\mathrm{Var}} 
\newcommand{\bayes}{\mu^{\star}}
\newcommand{\ERM}{\widehat{\mu}}
\newcommand{\M}{\mathcal{M}}
\newcommand{\mM}{m \in \M}
\newcommand{\Dh}{\widehat{D}}
\newcommand{\tauh}{\widehat{\tau}}
\newcommand{\taus}{\tau^{\star}}
\newcommand{\Ds}{D^{\star}}
\DeclareMathOperator{\pen}{pen}
\newcommand{\penid}{\pen_{\mathrm{id}}} 
\newcommand{\vmax}{v_{\max}}
\newcommand{\tr}{\mathrm{tr}}
\newcommand{\Risk}[1]{\mathcal{R}\parenj{#1}}
\newcommand{\Risks}[1]{\mathcal{R}\parens{#1}}
\newcommand{\Remp}[1]{\widehat{\mathcal{R}}_n\parenj{#1}}
\newcommand{\Remps}[1]{\widehat{\mathcal{R}}_n\parens{#1}}
\newcommand{\Rempb}[1]{\widehat{\mathcal{R}}_n\parenb{#1}}
\newcommand{\grandO}{\mathcal{O}}
\newcommand{\blackbox}{\ensuremath{\blacksquare}}
\newcommand{\inter}[1]{\mathopen{} \left\llbracket #1 \right\rrbracket \mathclose{}}
\newcommand{\e}{\mathrm{e}}
\renewcommand{\d}{\,\mathrm{d}}
\newcommand{\Top}{\textbf{Top: }}
\newcommand{\Bottom}{\textbf{Bottom: }}
\newcommand{\klin}{k^{\mathrm{lin}}}
\newcommand{\kGau}{k^{\mathrm{G}}}
\newcommand{\kHer}{k^{\mathrm{H}}}
\newcommand{\kchi}{k^{\chi^2}}
\newcommand{\sh}{\widehat{s}}
\newcommand{\cC}{\mathcal{C}}
\newcommand{\figtroishspace}{\hspace*{-.14\textwidth}}
\newcommand{\figtroiswidth}{.4\textwidth}
\begin{document}


\title{A Kernel Multiple Change-point Algorithm via Model Selection}

\author{Sylvain Arlot  \\ \texttt{sylvain.arlot@u-psud.fr} \\
       Laboratoire de Math\'ematiques d'Orsay \\
Univ. Paris-Sud, CNRS, Inria, Universit\'e Paris-Saclay \\
91405 Orsay, France
       \and\ 
       Alain Celisse \\ \texttt{celisse@math.univ-lille1.fr} \\
Laboratoire de Math\'ematiques Paul Painlev\'e\\
UMR 8524 CNRS-Universit\'e Lille 1 \\
\textsc{Modal}  Project-Team \\
F-59\,655 Villeneuve d'Ascq Cedex, France      
		\and\ 
		Zaid Harchaoui \\ \texttt{zaid@uw.edu} \\
Department of Statistics\\
University of Washington\\ 
Seattle, WA. USA
}

\maketitle


\begin{abstract}
We tackle the change-point problem with data belonging to a general set. 
We build a penalty for choosing the number of change-points in the kernel-based method of \citet{Har_Cap:2007}. 
This penalty generalizes the one proposed by \citet{Leb:2005} for a one-dimensional signal changing only through its mean. 
We prove a non-asymptotic oracle inequality for the proposed method, thanks to a new concentration result for some function of Hilbert-space valued random variables. 
Experiments on synthetic and real data illustrate the accuracy of our method, showing that it can detect changes in the whole distribution of data, even when the mean and variance are constant. 
\end{abstract}

\begin{keywords}
model selection, kernel methods, change-point detection, concentration inequality
\end{keywords}

\section{Introduction}
\label{sec.intro}

The change-point problem has been tackled in numerous papers in the statistics and machine learning literature \citep{Bro_Dar:1993,Car:etal:1994,Tar_Bas_Nik:2014,Tru_Oud_Vay:2019}. 
Given a time series, the goal is to split it into 
homogeneous segments, in which the marginal distribution 
of the observations ---their mean or their variance, for instance--- is constant.  
When the number of change-points is known, this problem reduces to estimating the change-point locations as precisely as possible; 
in general, the number of change-points itself must be estimated. 
This problem arises in a wide range of applications, 
such as bioinformatics~\citep{Pic_etal:2005,CXPKX_2012}, 
neuroscience~ \citep{Park_Wang_Nobauer_Vaziri_Priebe:2015}, 
audio signal processing~\citep{Wu_Hsieh:2006}, 
temporal video segmentation~\citep{Koprinska_Carrato:2001}, 
hacker-attacks detection~\citep{Wang_Tang_Park_Priebe:2014}, 
social sciences~\citep{KoWa_2006} 
and econometrics~\citep{McCu_2009}.

\medskip

\paragraph{Related work}
%
%
A large part of the literature on change-point detection deals with observations in $\R$ or $\R^d$ and focuses on detecting 
changes arising in the mean and/or the variance of the signal 
\citep{Gijbels_Hall_Kneip:1999,Pic_etal:2005,Arlot:Celisse:2011,Bertin_Collilieux_Lebarbier:2014}. 
To this end, parametric models are often involved to derive change-point detection procedures. 
For instance, \citet{Com_Roz:2004}, \citet{Leb:2005}, \citet{PicLebBudRob:2011} 
and \citet{Cuevas_Chiken_Pignatiello:2012} make a Gaussian assumption, 
while \citet{Frick_Munk_Sieling:2014} and \citet{Cleynen_Lebarbier:2014} consider an exponential family.

%
The challenging problem of detecting abrupt changes in the full distribution of the data has been recently addressed in the nonparametric setting.
However, the corresponding procedures suffer several limitations since they are limited to real-valued data or they assume that the number of true change-points is known.
For instance, \citet{Zou_Yin_Feng_Wang:2014} design a strategy based on empirical cumulative distribution functions that allows to recover an unknown number of change-points by use of BIC, but only applies to $\R$-valued data. 
The strategy of \citet{Matteson_James:2014} applies to multivariate data, 
but it is time-consuming due to an intensive permutation use, 
and fully justified only in an asymptotic setting 
when there is a single change-point \citep{Biau_Bleakley_Mason:2015}. 
The kernel-based procedure proposed by \citet{Har_Cap:2007} enables to deal not only with vectorial data but also with structured data in the sense of~\citet{gartner:2008}, 
but it assumes that the number of change-points to recover is known, which reduces its practical interest when no such information is available.
Finally, many of these procedures are theoretically grounded only by asymptotic results, 
which makes their finite-sample performance questionable. 

%
Other attempts have been made to design change-point detection procedures allowing to deal with complex data (that are not necessarily vectors). However, the resulting procedures do not allow to detect more than one or two changes arising in particular features of the distribution.
For instance, \citet{Che_Zha:2015} describe a strategy based on a dissimilarity measure between individuals to compute a graph from which a statistical test allows to detect only one or two change-points. For a graph-valued time series, \citet{Wang_Tang_Park_Priebe:2014} design specific scan statistics to test whether one change arises in the connectivity matrix.

\medskip

\paragraph{Main contributions} 
We first describe a new efficient multiple change-point detection procedure (KCP) allowing to deal with univariate, multivariate or complex data (DNA sequences or graphs, for instance) as soon as a positive semidefinite kernel can be defined for them. 
Among several assets, this procedure is nonparametric and does not require to know the true number of change-points in advance. 
Furthermore, it allows to detect abrupt changes arising in the full distribution of the data by using a characteristic kernel; 
it can also focus on changes in specific features of the distribution by choosing an appropriate kernel. 

Secondly, our procedure (KCP) is theoretically grounded with a finite-sample optimality result, namely an oracle inequality in terms of quadratic risk,
stating that its performance is almost the same as that of the best one within the class we consider (Theorem~\ref{thm.oracle.large}). 
As argued by \citet{Leb:2005} for instance, 
such a guarantee is what we want for a change-point detection procedure. 
It means that the procedure detects only changes that are ``large enough'' given the noise level and the amount of data available, 
which is necessary to avoid having many false positives. 
A crucial point is that Theorem~\ref{thm.oracle.large} 
holds true for any value of the sample size~$n$; 
in particular it can be smaller than the dimensionality of the data. 
Note that contrary to previous oracle inequalities in the change-point detection framework, 
our result requires neither the variance to be constant 
nor the data to be Gaussian.

Thirdly, we settle a new concentration inequality for the quadratic norm of sums of independent Hilbert-valued vectors with exponential tails, which is a key result to derive our non-asymptotic oracle inequality with a large collection of candidate segmentations. 
The application domain of our exponential concentration inequality is not limited to change-point detection.

%
Let us finally mention that since the first version of the present work \citep{Arl_Cel_Har:2012:v1}, 
KCP has been successfully applied on different practical examples. 
\citet{Cel_Mor_Mar_Rig:2016:journal} illustrate that KCP outperforms state-of-the-art approaches on biological data. 
\citet{Cab_etal:2017} show that KCP with a Gaussian kernel outperforms three non-parametric methods 
for detecting correlation changes in synthetic multivariate time series, 
and provide an application to some data from behavioral sciences. 
Applying KCP to running empirical correlations \citep{Cab_etal:2018} 
or to the autocorrelations of a multivariate time series \citep{Cab_etal:2018:AR}  can make it focus on a specific kind of change 
---in the covariance between coordinates or in the autocorrelation structure of each coordinate, respectively---, as illustrated on synthetic data experiments and two real-world datasets from psychology.

\medskip

\paragraph{Outline} 
Motivating examples are first provided in Section~\ref{sec.chpt.pb} to highlight the wide applicability of our procedure to various important settings.
A comprehensive description of our kernel change-point detection algorithm (KCP, or Algorithm~\ref{algo}) is provided in Section~\ref{sec.algo}, where we also discuss algorithmic aspects as well as the practical choice of influential parameters (Section~\ref{sec.practice}).
Section~\ref{sec.oracle} exposes some important ideas underlying KCP 
and then states the main theoretical results of the paper (Proposition~\ref{pro.conc.quad} and Theorem~\ref{thm.oracle.large}).
Proofs of these main results have been collected in Section~\ref{sec.mainproofs}, while technical details have been deferred to Appendices~\ref{app.additional.proofs} and~\ref{sec.supmat}.
The practical performance of the kernel change-point detection algorithm is illustrated by 
experiments on synthetic data in Section~\ref{sec.synthetic.data} 
and on real data in Section~\ref{subsec.real.experiments}. 
Section~\ref{sec.conclusion} concludes the paper by a short discussion.

\paragraph{Notation.} 
For any $a<b$, we denote by $\inter{a,b} \egaldef [a,b] \cap \N$ the set of integers between $a$ and $b$.


\section{The change-point problem}
\label{sec.chpt.pb}

Let $\X$ be some measurable set and $X_1, \ldots, X_n \in \X$ a sequence of independent $\X$-valued random variables. 
For any $i \in \sets{1, \ldots, n }$, we denote by $P_{X_i}$ the distribution of $X_i$. 
The change-point problem can then be summarized as follows: 
Given $(X_i)_{1 \leq i \leq n}$, the goal is to find the locations of the abrupt changes along the sequence $P_{X_1}, \ldots, P_{X_n}$. 
Note that the case of dependent time series is often considered in the change-point literature 
\citep{Lavielle:Moulines:2000,Bardet_Kammoun:2008,Bar_Ken_Win:2010,Chang_Guo_Yao:2014}; 
as a first step, this paper focuses on the independent case for simplicity. 

An important example to have in mind is when $X_i$ corresponds to 
the observation at time $t_i = i/n$ of some random process on $[0,1]$, 
and we assume that this process is stationary over $[t^{\star}_{\ell} , t^{\star}_{\ell+1})$, 
$\ell=0, \ldots, D^{\star}-1$, 
for some fixed sequence $0 = t^{\star}_0 < t^{\star}_1 < \cdots < t^{\star}_{D^{\star}} = 1$. 
Then, the change-point problem is equivalent to localizing the change-points 
$t^{\star}_1, \ldots, t^{\star}_{D^{\star}-1} \in [0,1]$, 
which should be possible as the sample size $n$ tends to infinity. 
Note that we never make such an asymptotic assumption in the paper, 
where all theoretical results are non-asymptotic. 

\medbreak

Let us now detail some motivating examples of the change-point problem. 

\begin{example} \label{ex.Rd-moy}
The set $\X$ is $\R$ or $\R^d$, 
and the sequence $(P_{X_i})_{1 \leq i \leq n}$ changes only 
through its mean. 
This is the most classical setting, 
for which numerous methods have been proposed and analyzed 
in the one-dimensional setting \citep{Com_Roz:2004,Zha_Sie:2007,Boysen:etal:2009,Koko:2011,Fryzlewicz:2014} as well as the multi-dimensional case \citep{PicLebBudRob:2011,BleVer:2011,Hocking_Rigaill_Vert_Bach:2013,Soh_Cha:2014,Col_Leb_Rob:2015}.  
\end{example}

\begin{example} \label{ex.Rd-var}
The set $\X$ is $\R$ or $\R^d$, 
and the sequence $(P_{X_i})_{1 \leq i \leq n}$ changes only 
through its mean and/or its variance (or covariance matrix). 
This setting is rather classical, at least in the one-dimensional case, 
and several methods have been proposed for it 
\citep{Andreou_Ghysels:2002,Pic_etal:2005,Fryzlewicz_Rao:2014,Cab_etal:2017}.
\end{example}

\begin{example} \label{ex.Rd-loi}
The set $\X$ is $\R$ or $\R^d$, 
and no assumption is made on the changes in the sequence $(P_{X_i})_{1 \leq i \leq n}$. 
For instance, when data are centered and normalized, 
as in the audio-track example \citep{Rabiner:Schaefer:2007}, 
the mean and the variance of the $X_i$ can be constant, 
and only higher-order moments of $(P_{X_i})_{1 \leq i \leq n}$ are changing. 
Only a few recent papers deal with (an unknown number of) multiple change-points in a fully nonparametric framework: 
\citet{Zou_Yin_Feng_Wang:2014} for $\X=\R$, 
\citet{Matteson_James:2014} for $\X=\R^d$. 
Note that assuming $\X=\R$ and adding some further restrictions on the maximal order 
of the moments for which a change can arise in the sequence $(P_{X_i})_{1 \leq i \leq n}$, 
it is nevertheless possible to consider the multivariate sequence 
$ \parens{ \parens{ p_j(X_i)}_{0 \leq j \leq d} }_{1 \leq i \leq n}$, 
where $p_j$ is a polynomial of degree $j$ for $j \in \sets{0, \ldots, d}$, 
and to use a method made for detecting changes in the mean (Example~\ref{ex.Rd-moy}). 
For instance with $\R$-valued data, one can take $p_j(X)=X^j$ for every $1\leq j\leq d$, 
or $p_j$ equal to the $j$-th Hermite polynomial, as proposed by \cite{Laj_Arl_Bac:2014:icml}.
\end{example}

\begin{example} \label{ex.histos}
The set $\X$ is the $d$-dimensional simplex 
$\sset{(p_1, \ldots, p_d) \in [0,1]^d \text{ such that } p_1 + \dots + p_d = 1}$. 
For instance, audio and video data are often represented by histogram features \citep{oliva2001modeling,lowe2004distinctive,Rabiner:Schaefer:2007}, 
as done in Section~\ref{subsec.real.experiments}. 
%
In such cases, it is a bad idea to do as if $\X$ were $\R^d$-valued, 
since the Euclidean norm on $\R^d$ is usually a bad distance measure between histogram data. 
\end{example}

\begin{example} \label{ex.graphes}
The set $\X$ is a set of graphs. 
For instance, the $X_i$ can represent a social network \citep{KoWa_2006} 
or a biological network \citep{CXPKX_2012} 
that is changing over time \citep{Che_Zha:2015}. 
Then, detecting meaningful changes in the structure of a time-varying network 
is a change-point problem. 
In the case of social networks, this can be used for detecting 
the rise of an economic crisis \citep{McCu_2009}.
\end{example}

\begin{example} \label{ex.texte}
The set $\X$ is a set of texts (strings). 
For instance, text analysis can try to localize possible changes of authorship within 
a given text \citep{Che_Zha:2015}. 
\end{example}

\begin{example} \label{ex.ADN}
The set $\X$ is a subset of $\{A,T,C,G\}^{\N}$, the set of DNA sequences. 
For instance, an important question in phylogenetics is to find recombination events  from the genome of individuals of a given species \citep{KK_book_2010,Ane:2011}. 
This can be achieved from a multiple alignment of DNA sequences \citep{SchTsuVert_2004} by detecting abrupt changes (change-points) in the phylogenetic tree at each DNA position, 
that is, by solving a change-point problem. 
\end{example}

\begin{example} \label{ex.images}
The set $\X$ is a set of images. 
For instance, video shot boundary detection \citep{Cot:2006} or scene detection in videos \citep{All_Mad_Ye:2016} 
can be cast as change-point detection problems. 
\end{example}

\begin{example} \label{ex.functional}
The set $\X$ is an infinite-dimensional functional space. 
Such functional data arise in various fields \citep[see for instance][Chapter~2]{Fer_Vie:2006}, 
and the problem of testing whether there is a change or not 
in a functional time series has been considered recently 
\citep{Fer_Vie:2006,Ber_Gab_Hor_Kok:2009,Sharipov_Tewes_Wendler:2014}. 
\end{example}

Other kinds of data could be considered, such as 
counting data \citep{Cleynen_Lebarbier:2014,Ala_Gai_Gui:2015}, 
qualitative descriptors, 
as well as composite data, that is, data $X_i$ that are mixing several above examples. 

\medbreak

The goal of the paper is to propose 
a change-point algorithm that is 
(i) general enough to handle all these situations 
(up to the choice of an appropriate similarity measure on $\X$), 
(ii) in a non parametric framework, 
(iii) with an unknown number of change-points, and 
(iv) that we can analyze theoretically in all these examples simultaneously.

Note also that we want our algorithm to output a set of change-points that are ``close to'' the true ones, at least when $n$ is large enough.
But in settings where the signal-to-noise ratio is not large enough to recover all true change-points (for a given $n$), we do not want to have false positives.
This motivates the non-asymptotic analysis of our algorithm that we make in this paper. 
Since our algorithm relies on a model selection procedure, 
we prove in Section~\ref{sec.oracle} an oracle inequality, 
as usually done in non-asymptotic model selection theory.

\section{Detecting changes in the distribution with kernels}
\label{sec.algo}

Our approach for solving the general change-point problem uses positive semidefinite kernels. 
It can be sketched as follows. 

\subsection{Kernel change-point algorithm} \label{sec.algo.def}

For any integer $D \in \inter{ 1, n+1 }$, the set of sequences of $(D-1)$ change-points is defined by 
\begin{align}
\cT_n^D \egaldef \acc{ (\tau_0, \ldots, \tau_D) \in \N^{D+1} \, / \, 0 = \tau_0 < \tau_1 < \tau_2 < \cdots < \tau_{D}= n }
\label{eq.collection.segmentations.K}  
\end{align}
where $\tau_1, \ldots, \tau_{D-1}$ are the change-points, 
and $\tau_0,\tau_D$ are just added for notational convenience. 
Any $\tau \in \cT_n^D$ is called a \emph{segmentation} (of $\sets{1, \ldots, n}$) into $D_{\tau} \egaldef D$ segments.

\medbreak

Let $k: \X \times \X \to \R$ be a positive semidefinite kernel, 
that is, a measurable function $\X \times \X \to \R$ such that for any $x_1, \ldots, x_n \in \X$, 
the $n \times n$ matrix $(k(x_i,x_j))_{1 \leq i,j \leq n}$ is positive semidefinite. 
Examples of such kernels are given in Section~\ref{sec.chpt.high-dim.examples}. 
Then, we measure the quality of any candidate segmentation $\tau \in \cT_n^D$ with the \emph{kernel least-squares criterion} introduced by \cite{Har_Cap:2007}: 
\begin{equation} \label{eq.Remp}
\Remp{\tau} 
\egaldef \frac{1}{n} \sum_{i=1}^n k(X_i,X_i) 
- \frac{1}{n} \sum_{\ell=1}^D \croch{ \frac{1}{\tau_{\ell} - \tau_{\ell-1}} \sum_{i=\tau_{\ell-1}+1}^{\tau_{\ell}} \sum_{j=\tau_{\ell-1}+1}^{\tau_{\ell}} k(X_i,X_j)  }
\, .
\end{equation}
In particular when $\X=\R$ and $k(x,y)=xy$, we recover the usual least-squares criterion 
\[ 
\Remp{\tau} 
= \frac{1}{n}  \sum_{\ell=1}^D \sum_{i=\tau_{\ell-1}+1}^{\tau_{\ell}} \paren{X_i - \overline{X}_{ \inter{ \tau_{\ell-1}+1,\tau_{\ell} } }}^2 
\quad \text{where} \quad 
\overline{X}_{ \inter{ \tau_{\ell-1}+1,\tau_{\ell} } } \egaldef \frac{1}{\tau_{\ell} - \tau_{\ell-1}} \sum_{j=\tau_{\ell-1}+1}^{\tau_{\ell}} X_j
\,  . 
\]
Note that Eq.~\eqref{empirical.risk.loss} in Section~\ref{subsec.abstract.formulation.algorithm} provides an equivalent formula for $\Remp{\tau}$, 
which is helpful for understanding its meaning. 
Given the criterion \eqref{eq.Remp}, we cast the choice of $\tau$ as a model selection problem (as thoroughly detailed in Section~\ref{sec.oracle}), which leads to Algorithm~\ref{algo} below, 
that we now briefly comment on. 
\begin{algorithm}{}
\caption{\label{algo} kernel change-point algorithm (KCP)}
\begin{tabular}{ll}
\hline\\
\textbf{Input:} & observations:\quad $X_1, \ldots, X_n \in \X$, \\
& 
kernel:\quad  $k: \X\times \X \rightarrow \R$, \\
      & constants:\quad $c_1,c_2 >0$ and $D_{\max} \in \inter{1,n-1}$. 
\\[2mm]
\textbf{Step 1:}& $\forall D\in \inter{1,D_{\max}}$, compute (by dynamic programming): \\ 
& \hspace*{1,5cm} $\tauh(D) \in \argmin_{ \tau \in \cT_n^D } \setb{ \Remp{\tau} }$ \quad and \quad 
$\Rempb{\tauh(D)}$ 
\\[2mm]
\textbf{Step 2:} & find: \\
& \hspace*{1.5cm}
$ \displaystyle \Dh \in \argmin_{1 \leq D \leq  D_{\max} } \acc{ \Rempb{\widehat{\tau}(D) }  + \frac{ 1}{n} \parenj{ c_1 \log \binom{n-1}{D-1} + c_2 D} } $.
\\[3mm]
\textbf{Output:} & sequence of change-points: $\tauh = \tauh \parenb{ \Dh }$. \\
\hline \\
\end{tabular}
\end{algorithm}
\begin{itemize}
\item Step~1 of KCP consists in choosing the ``best'' segmentation with $D$ segments, 
that is, the minimizer of the kernel least-squares criterion $\Remp{\cdot}$ over $\cT_n^D$, for every $D \in \inter{1,D_{\max}}$. 

\item Step~2 of KCP chooses $D$ by model selection, using a penalized empirical criterion. 
A major contribution of this paper lies in the building and theoretical justification of the penalty 
$n^{-1} \sparen{ c_1 \log \binom{n-1}{D-1} + c_2 D}$, see Sections~\ref{sec.oracle}--\ref{sec.mainproofs}; 
a simplified penalty, of the form 
$\frac{D}{n} \sparen{ c_1 \log\sparen{\frac{n}{D}} + c_2} $, 
would also be possible, see Section~\ref{sec.large.collection}.  

\item Practical issues (computational complexity and choice of constants $c_1,c_2,D_{\max}$) are discussed in Section~\ref{sec.practice}. 
Let us only emphasize here that KCP is computationally tractable; its most expensive part is the minimization problem of Step~1, 
which can be done by dynamic programming \citep[see][]{Har_Cap:2007,Cel_Mor_Mar_Rig:2016:journal}. 
An implementation of KCP in python can be found in the ruptures package \cite{Tru_Oud_Vay:2018:python}. 
\end{itemize}

\subsection{Examples of kernels}
\label{sec.chpt.high-dim.examples}
KCP can be used with various sets $\X$ (not necessarily vector spaces) as long as a positive semidefinite kernel on $\X$ is available.
An important issue is to design relevant kernels, that are able to capture important features of the data for a given change-point problem, 
including non-vectorial data ---for instance, simplicial data (histograms), texts or graphs  (networks), 
see Section~\ref{sec.chpt.pb}. 
The question of choosing a kernel is discussed in Section~\ref{sec.conclusion.choix-k}. 

Classical kernels can be found in the books by \citet{Sch_Smo:2001}, \citet{Shawe:Cristianini:2004} and \citet{SchTsuVert_2004} for instance. 
Let us mention a few of them: 
\begin{itemize}
\item When $\X=\R^d$, $\klin(x,y) = \prodscals{x}{y}_{\R^d}$ defines the \emph{linear kernel}. 
When $d=1$, KCP then coincides with the algorithm proposed by \cite{Leb:2005}. 

\item When $\X=\R^d$, $\kGau_{h}(x,y) = \exp\scroch{ - \norm{x-y}^2 / (2 h^2) }$ defines the \emph{Gaussian kernel} with bandwidth $h>0$, 
which is used in the experiments of Section~\ref{sec.synthetic.data}. 

\item When $\X=\R^d$, $k^L_{h}(x,y) = \exp\scroch{ - \norms{x-y} / h }$ defines the \emph{Laplace kernel} with bandwidth $h>0$. 

\item When $\X=\R^d$, $k^{\mathrm{e}}_h(x,y) = \exp(  \prodscals{x}{y}_{\R^d} / h ) $ defines the \emph{exponential kernel} with bandwidth $h>0$. 
Note that, unlike the Gaussian and Laplace kernels, the exponential kernel is not translation-invariant.

\item When $\X=\R$, $\kHer_h(x,y) = \sum_{j=1}^5  H_{j,h}(x) H_{j,h}(y)$, corresponds to the Hermite kernel, where 
$ H_{j,h}(x) = 2^{j+1} \sqrt{\pi j!} \e^{-x^2/(2 h^2)}(-1)^j \e^{-x^2/2} \parens{\partial/\partial x}^j \parens{\e^{-x^2/2}}$
denotes the $j$-th Hermite function with bandwidth $h>0$. This kernel is used in Section~\ref{sec.synthetic.data}.

\item When $\X$ is the $d$-dimensional simplex as in Example~\ref{ex.histos}, the $\chi^2$-kernel can be defined by
$\kchi_{h} (x,y) = \exp\paren{ -\frac{1}{h \cdot d}\sum_{i=1}^d \frac{\paren{ x_i - y_i }^2}{x_i+y_i} }$ for some bandwidth $h>0$. 
An illustration of its behavior is provided in the simulation experiments of Sections~\ref{sec.synthetic.data} and~\ref{subsec.real.experiments}. 

\end{itemize}
Note that more generally, \citet{Sejdi_Sriper_Gretton_Fukumizu:2013}
proved that positive semidefinite kernels can be defined on any set $\X$ for which a semimetric of negative type is used to measure closeness between points. The so-called \emph{energy distance} between probability measures is an example \citep{Matteson_James:2014}. 
In addition, specific kernels have been designed for various kinds of structured data, 
including all the examples of Section~\ref{sec.chpt.pb} \citep{Cut_Fuk_Ver:2005,She:2012,Rakotomamonjy:Canu:2005,Vedaldi:2012}. 
Kernels can also be built from convolutional neural networks, with successful applications in computer vision \citep{mairal:etal:2014,Paulin:etal:2017}. 

Let us finally remark that KCP 
can also be used when $k$ is not a positive semidefinite 
kernel; its computational complexity remains unchanged, 
but we might loose the theoretical guarantees 
of Section~\ref{sec.oracle}. 

\subsection{Practical issues} \label{sec.practice}

\paragraph{Computational complexity.}
The discrete optimization problem at Step~1 of KCP is apparently hard to solve since, for each $D$, 
there are $\binom{n-1}{D-1}$ segmentations of $\sets{1, \ldots, n}$ into $D$ segments.  
Fortunately, as shown by \citet{Har_Cap:2007}, this optimization problem can be solved efficiently by 
dynamic programming. In the special case of a linear kernel, we recover the classical dynamic programming algorithm for detecting changes in mean~\citep{Fisher:1958,Auger_Lawrence:1989,Kay:1993}.

Denoting by $\cC_k$ the cost of computing $k(x,y)$ for some given $x,y \in \X$, the computational cost of a naive implementation of Step~1 
---computing each coefficient $(i,j)$ of the \emph{cost matrix} independently--- 
then is $\grandO\parens{ \cC_k  n^2 + D_{\max}n^4 }$ in time and $\grandO(D_{\max}n+n^2)$ in space. The computational complexity can actually be $\grandO((\cC_k+D_{\max})  n^2 )$ in time 
and $\grandO(D_{\max} n)$ in space as soon as one 
	either uses the summed area table or integral image technique as in~\citep{potapov:etal:2014} 
	or optimizes the interplay of the dynamic programming recursions and cost matrix computations~\citep{Cel_Mor_Mar_Rig:2016:journal}. 
For given constants $D_{\max}$ and $c_1,c_2$, Step~2 is straightforward since it consists in a minimization problem among $D_{\max}$ terms already stored in memory. 
Therefore, the overall complexity of KCP is at most $\grandO\parens{ (\cC_k + D_{\max}) n^2 }$ in time and $\grandO(D_{\max} n)$ in space.

\paragraph{Setting the constants $c_1,c_2$.}
At Step~2 of KCP, two constants $c_1,c_2>0$ appear in the penalty term. 
Theoretical guarantees (Theorem~\ref{thm.oracle.large} in Section~\ref{sec.oracle}) 
suggest to take $c_1=c_2=c$ large enough, 
but the lower bound on $c$ in Theorem~\ref{thm.oracle.large} is pessimistic, 
and the optimal value of $c$ certainly depends on unknown features of the data 
such as their ``variance'', as discussed after Theorem~\ref{thm.oracle.large}. 
In practice the constants $c_1,c_2$ must be chosen from data. 
To do so, we propose a fully data-driven method, 
based upon the ``slope heuristics'' \citep{2012_BauMauMich}, 
that is explained in Section~\ref{subsubsec.synthetic.proc}. 
Another way of choosing $c_1,c_2$ is described in supplementary material (Section~\ref{sec.choix-c1-c2.variance}).

\paragraph{Setting the constant $D_{\max}$.}
KCP requires to specify the maximal dimension $D_{\max}$ of the segmentations considered, 
a choice that has three main consequences. 
First, the computational complexity of KCP is affine in $D_{\max}$, 
as discussed above. 
Second, if $D_{\max}$ is too small ---smaller than the number of true change-points that can be detected---, 
the segmentation $\widehat{\tau}$ provided by the algorithm will necessarily be too coarse. 
Third, when the slope heuristics is used for choosing $c_1,c_2$, taking $D_{\max}$ larger than the true number of change-points 
might not be sufficient: better values for $c_1,c_2$ can be obtained by taking $D_{\max}$ larger, up to $n$. 
From our experiments, it seems that $D_{\max} \approx n/\sqrt{\log n}$ is large enough to provide good results.

\subsection{Related change-point algorithms}\label{sec.algo.related}
In addition to the references given in the Introduction, 
let us mention a few change-point algorithms 
to which KCP is more closely related. 

First, some two-sample (or homogeneity) tests based on kernels have been suggested. They tackle a simpler problem than the general change-point problem described in Section~\ref{sec.chpt.pb}. 
Among them, \citet{Gre_etal:2012} proposed a two-sample test based on a U-statistic of order two, called the 
maximum mean discrepancy (MMD). 
A related family of two-sample tests, called $B$-tests, has been proposed by \citet{Zar_Gre_Bla:2013}; 
$B$-tests have also been used by \citet{Li_Xie_Dai_Son:2015:nips,Li_Xie_Dai_Son:2015:journal} for localizing a single change-point. 
\citet{Har_Bac_Mou:2008} proposed a studentized kernel-based test statistic for testing homogeneity. 
Resampling methods ---(block) bootstrap and permutations--- have 
also been proposed for choosing the threshold of 
several kernel two-sample tests 
\citep{Fro_etal:2012,Chw_Sej_Gre:2014,Sharipov_Tewes_Wendler:2014}. 

Second, \citet{Har_Cap:2007} proposed a kernel change-point 
algorithm when the true number of segments $D^{\star}$ is 
known, 
which corresponds to Step~1 of KCP. 
The present paper proposes a data-driven choice of $D$ 
for which theoretical guarantees are proved. 

Third, when $\X=\R$ and $k(x,y)=xy$, $\Remp{\tau}$ is the 
usual least-squares risk and Step~2 of KCP 
is similar to the penalization procedures proposed by 
\citet{Com_Roz:2004} and \citet{Leb:2005} 
for detecting changes in the mean of a one-dimensional signal. 
We refer readers familiar with model selection techniques to 
Section~\ref{subsec.abstract.formulation.algorithm} for an 
equivalent formulation of KCP ---in more 
abstract terms--- that clearly emphasizes the links between 
KCP and these penalization procedures.

\section{Theoretical analysis}\label{sec.oracle}

We now provide theoretical guarantees for KCP. 
We start by reformulating it in an abstract way, 
which enlightens how it works. 

\subsection{Abstract formulation of KCP}
\label{subsec.abstract.formulation.algorithm}
Let $\H = \H_k$ denote the reproducing kernel Hilbert space (RKHS) associated 
with the positive semidefinite kernel $k:\ \X\times \X \to \R$. 
The canonical feature map $\Phi: \X \mapsto \cH$ is then defined by 
$\Phi(x) = k(x,\cdot)\in\H$ for every $x\in\X$. 
A detailed presentation of positive semidefinite kernels and related notions 
can be found in several books \citep{Sch_Smo:2001,Cucker:DXZ:2007,Ste_Chr:2008}. 

Let us define 
$Y_i = \Phi(X_i) \in \H$ 
for every $i \in \sets{1, \ldots, n}$, 
$Y = (Y_i)_{1 \leq i \leq n} \in \H^n$, 
$ \cT_n \egaldef \bigcup_{D=1}^n \cT_n^D $
the set of segmentations ---see Eq.~\eqref{eq.collection.segmentations.K}---, 
and for every $\tau \in \cT_n$, 
\begin{align}
F_{\tau} \egaldef \setj{ f = (f_1,\ldots,f_n) \in \H^n \text{ s.t. } 
  f_{\tau_{\ell-1}+1} = \cdots = f_{\tau_{\ell}} \quad \forall 1\leq \ell \leq D_{\tau} 
}
\enspace ,
\label{model.piecewise.property}
\end{align}
which is a linear subspace of $\H^n$.
We also define on $\H^n$ the canonical scalar product by 
$\prodscal{f}{g} \egaldef \sum_{i=1}^n \prodH{f_i}{g_i}$ for $f,g \in \H^n$, 
and we denote by $\norm{\cdot}$ the corresponding norm. 
Then, for any $g \in \H^n$, 
\begin{equation}
\label{def.Pitau} 
\Pi_{\tau} g \egaldef \argmin_{f \in F_{\tau}} \setj{ \norm{f - g}^2 }
\end{equation} 
is the orthogonal projection of $g\in\H^n$ onto $F_{\tau}$, 
and satisfies 
\begin{align}
\label{eq.regressogram.Hilbert}
\forall g \in \H^n , \,
\forall 1\leq \ell \leq D_\tau \, , \, 
\forall i \in \inter{ \tau_{\ell-1} + 1, \tau_{\ell}} , \quad 
(\Pi_{\tau} g)_i &= \frac{1}{\tau_{\ell} - \tau_{\ell-1}} \sum_{j =\tau_{\ell-1} + 1 }^{\tau_{\ell}} g_j 
\enspace .
\end{align}
The proof of this statement has been deferred to Appendix~\ref{app.proofs.orthog.proj}.

Following \citet{Har_Cap:2007}, 
the empirical risk $\Remps{\tau}$ defined by Eq.~\eqref{eq.Remp} can be rewritten as 
\begin{align}\label{empirical.risk.loss}
  \Remp{\tau} = \frac{1}{n} \norm{ Y - \ERM_{\tau} }^2 
\qquad \text{where} \qquad 
\ERM_{\tau} = \Pi_{\tau} Y
\enspace ,
\end{align}
as proved in Appendix~\ref{app.proofs.orthog.proj}.

For each $D\in \inter{ 1,D_{\max} }$,  Step~1 of KCP consists in finding a segmentation $\tauh(D)$ in $D$ segments such that
\begin{align*}
  \tauh(D) \in 
\argmin_{\tau \in \cT_n^D} \setj{ \normb{ Y - \ERM_{\tau} }^2 } 
= \argmin_{\tau \in \cT_n^D} \setj{ \inf_{f \in F_{\tau}} \sum_{i=1}^n \normb{ \Phi(X_i) - f_i }^2 } 
\enspace ,
\end{align*}
which is the ``kernelized'' version of the classical least-squares change-point algorithm  \citep{Leb:2005}. 
Since the penalized criterion of Step~2 is similar to that of \citet{Com_Roz:2004} and \citet{Leb:2005}, 
we can see KCP as a ``kernelization'' of these penalized least-squares change-point procedures. 

\medbreak

Let us emphasize that building a theoretically-grounded penalty for such a kernel least-squares change-point algorithm 
is not straightforward. For instance, we cannot apply the model selection results by \citet{Bir_Mas:2002} that were used by \citet{Com_Roz:2004} and \citet{Leb:2005}. 
Indeed, a Gaussian homoscedastic assumption is not realistic for general Hilbert-valued data, 
and we have to consider possibly heteroscedastic data
for which we assume only that $Y_i = \Phi(X_i)$ is bounded in $\cH$ 
---see Assumption~\eqref{hyp.donnees-bornees} in Section~\ref{sec.oracle.hyp}. 
Note that unbounded data $X_i$ can satisfy Assumption~\eqref{hyp.donnees-bornees}, for instance by choosing a bounded kernel such as the Gaussian or Laplace ones. 
In addition, dealing with Hilbert-valued random variables instead of (multivariate) real variables requires a new concentration inequality, 
see Proposition~\ref{pro.conc.quad} in Section~\ref{sec.concentration.hilbert}.

\subsection{Intuitive analysis}
\label{sec.oracle.intuition}

Section~\ref{subsec.abstract.formulation.algorithm} shows 
that KCP can be seen as a kernelization 
of change-point algorithms focusing on changes of the mean 
of the signal \citep[for instance]{Leb:2005}. 
Therefore, KCP is looking for changes in the ``mean'' of $Y_i = \Phi(X_i) \in \cH$, 
provided that such a notion can be defined. 

If $\H$ is separable and $\E\crochs{k(X_i,X_i)}<+\infty$, 
we can define the (Bochner) mean $\bayes_i \in \H$ of 
$\Phi(X_i)$ \citep{Led_Tal:1991}, 
also called the mean element of $P_{X_i}$, by 
\begin{align}
\forall g \in \H  , 
\qquad 
\prodH{\bayes_i}{g} = \E\crochb{g(X_i)} = \E\crochb{\prodH{Y_i}{g}}
\enspace . \label{eq.mean.element}
\end{align}
Then, we can write 
\begin{align*}
  \forall 1 \leq i \leq n,\qquad Y_i = \bayes_i + \varepsilon_i \in\H 
  \qquad \text{where} \qquad 
  \varepsilon_i \egaldef Y_i - \bayes_i 
  \enspace . 
\end{align*}
The variables $\parens{ \varepsilon_i}_{1\leq i\leq n}$ 
are independent and centered ---that is, $\forall g \in \cH$, $\E\crochs{\prodH{\varepsilon_i}{g}}=0$. 
So, we can understand 
$\ERM_{\tau}$ as the least-squares estimator over $F_{\tau}$ of $\bayes=\parens{\bayes_1,\ldots,\bayes_n} \in \cH^n$. 

\medbreak 

An interesting case is when $k$ is a \emph{characteristic kernel} \citep{Fuk_etal:2008}, 
or equivalently, when $\H_k$ is \emph{probability-determining} \citep{Fuk_Bac_Jor:2004,Fuk_Bac_Jor:2004:nips}. 
Then any change in the distribution $P_{X_i}$ 
induces a change in the mean element $\bayes_i$. 
In such settings, we can expect KCP to be able to 
detect \emph{any change} in the distribution $P_{X_i}$, 
at least asymptotically. 
For instance the Gaussian kernel is characteristic \citep[Theorem~4]{Fuk_Bac_Jor:2004:nips}, 
and general sufficient conditions for $k$ to be characteristic 
are known \citep{SGFSL_2010,Sri_Fuk_Lan:2011}. 

Note that \citet{Sharipov_Tewes_Wendler:2014} suggest to use 
$k_{\leq}(x,y) = \un_{x \leq y}$ as a ``kernel'' 
within a two-sample test, 
in order to look for any change of the distribution of 
real-valued data $X_i$ (Example~\ref{ex.Rd-loi}). 
This idea is similar to our proposal of using 
KCP with a characteristic kernel for 
tackling Example~\ref{ex.Rd-loi}, even if 
we do not advise to take $k=k_{\leq}$ within KCP. 
Indeed, when $k=k_{\leq}$, 
$\Remp{\tau} = \frac{1}{2} - \frac{D_{\tau}}{2n}$ 
as soon as the $X_i$ are all different 
so that KCP becomes useless. 
This illustrates that using a kernel which is not 
symmetric positive definite 
should be done cautiously. 

\subsection{Notation and assumptions}
\label{sec.oracle.hyp}

Throughout the paper, we assume that 
\emph{$\H$ is separable}, which is kind of a minimal assumption for two reasons: it allows to define the mean element ---see Eq.~\eqref{eq.mean.element}---, and most reasonable examples satisfy this requirement \citep[][p.~4]{Die_Bac:2014:v1}.
Let us further assume
\begin{align} 
\label{hyp.donnees-bornees}  \tag{\bf Db}
\exists M \in (0,+\infty) \, , \quad 
\forall i \in \sets{1, \ldots, n}, \qquad 
\normHs{Y_i}^2 = \normHs{\Phi(X_i)}^2 
= k(X_i,X_i) \leq M^2 
\quad \text{a.s.}
\end{align}
For every $1\leq i \leq n$, we also define the ``variance'' of $Y_i$ by 
\begin{align}
\label{eq.v_i}
v_i \egaldef \E\crochB{ \normHb{\Phi(X_i) - \bayes_i}^2}
 = \E\crochb{k(X_i,X_i)} - \normHb{\bayes_i}^2
 = \E\crochb{k(X_i,X_i) - k(X_i, X'_i)} 
\end{align}
where $X'_i$ is an independent copy of $X_i$, 
and  
$\vmax  \egaldef   \max_{1 \leq i \leq n}  v_i$. 
Let us make a few remarks. 
\begin{itemize}
\item If \eqref{hyp.donnees-bornees} holds true, then 
the mean element $\bayes_i$ exists since $\E\crochs{\sqrt{k(X_i,X_i)}} < \infty$, 
the variances $v_i$ are finite and smaller than $\vmax \leq M^2$. 

\item If \eqref{hyp.donnees-bornees} holds true, then 
$Y_i$ admits a covariance operator $\Sigma_i$ 
that is trace-class and $v_i = \tr(\Sigma_i)$. 

\item If $k$ is translation invariant, that is,
$\X$ is a vector space and 
$k(x,x^{\prime}) = \overline{k}(x-x^{\prime})$ for every 
$x,x' \in \X$, and some measurable function $\overline{k}: \X \to \R$, 
then \eqref{hyp.donnees-bornees} holds true with $M^2 = k(0)$ 
and $v_i = k(0) - \snormH{\bayes_i}^2$. 
For instance the Gaussian and Laplace kernels are translation invariant (see Section~\ref{sec.chpt.high-dim.examples}). 

\item Let us consider the case of the linear kernel 
$(x,y) \mapsto \prodscal{x}{y}$ 
on $\X=\R^d$. 
If $\E\crochs{\norms{X_i}_{\R^d}^2}<\infty$, then, 
$v_i = \tr(\Sigma_i)$ where $\Sigma_i$ is the covariance matrix of $X_i$. 
In addition, \eqref{hyp.donnees-bornees} holds true if 
and only if $\norm{X_i}_{\R^d} \leq M$ a.s.\@ for all $i$. 

\end{itemize}

\subsection{Concentration inequality for some quadratic form of Hilbert-valued random variables} 
\label{sec.concentration.hilbert}

Our main theoretical result, stated in 
Section~\ref{sec.large.collection}, 
relies on two concentration inequalities for some 
linear and quadratic functionals of Hilbert-valued vectors.
Here we state the concentration result 
that we prove for the quadratic term, 
which is significantly different from existing results 
and can be of independent interest. 

\begin{proposition}[Concentration of the quadratic term] \label{pro.conc.quad}
Let $\tau\in \cT_n$ and recall that $\Pi_{\tau}$ is 
the orthogonal projection onto $F_{\tau}$ in $\H^n$ defined by Eq.~\eqref{def.Pitau}. 
Let $X_1, \ldots, X_n$ be independent $\X$-valued random variables 
and assume that \eqref{hyp.donnees-bornees} holds true, 
so that we can define 
$\varepsilon = (\varepsilon_1, \ldots, \varepsilon_n) \in \cH^n$ 
as in Section~\ref{subsec.abstract.formulation.algorithm}. 
Then for every $x>0$, with probability at least $1-\e^{-x}$, 
\begin{align*} 
 \norms{\Pi_{\tau} \varepsilon}^2 - \E\crochj{\norms{\Pi_{\tau}  \varepsilon }^2}  
&\leq 
\frac{14 M^2}{3} \parenj{ x + 2 \sqrt{2 x D_{\tau}}}
\enspace .
\end{align*}
\end{proposition}
Proposition~\ref{pro.conc.quad} is proved in Section~\ref{sec.proofs.quad}. 
The proof relies on a combination of Bernstein's and Pinelis-Sakhanenko's inequalities. 
Note that the proof of Proposition~\ref{pro.conc.quad} also 
shows that for every $x>0$, with probability at least $1-\e^{-x}$, 
\begin{align*} 
 \norms{\Pi_{\tau} \varepsilon}^2 - \E\crochj{\norms{\Pi_{\tau}  \varepsilon }^2}  
&\geq 
- \frac{14 M^2}{3} \parenj{ x + 2 \sqrt{2 x D_{\tau}}}
\enspace .
\end{align*}

Previous concentration results for quantities such as 
$ \norms{\Pi_{\tau} \varepsilon}^2$ 
or $ \norms{\Pi_{\tau} \varepsilon}$
do not imply Proposition~\ref{pro.conc.quad} ---even up to 
numerical constants. 
Indeed, they either assume that $\varepsilon$ is a Gaussian 
vector, or they involve much larger deviation terms (see Section~\ref{sec.Talagrand} for a detailed discussion 
of these results). 

\subsection{Oracle inequality for KCP}
\label{sec.large.collection}

Similarly to the results of \citet{Com_Roz:2004} and \citet{Leb:2005} in the one-dimensional case, 
we state below a non-asymptotic oracle inequality for KCP. 
First, we define the quadratic risk of any $\mu \in \H^n$ as an estimator of $\bayes$ by 
\begin{align*}
\Risks{\mu} 
= \frac{1}{n} \norms{ \mu - \bayes }^2 
= \frac{1}{n} \sum_{i=1}^n \normHs{ \mu_{i} - \bayes_i }^2 
\enspace .
\end{align*}

\begin{theorem}
\label{thm.oracle.large} 
We consider the framework and notation introduced in Sections~\ref{sec.chpt.pb}--\ref{sec.oracle}. 
Let $C \geq 0$ be some constant. 
Assume that \eqref{hyp.donnees-bornees} holds true and that 
$\pen: \cT_n\to \R$ is some penalty function satisfying 
\begin{align}
\label{eq.thm.oracle-ineq.sansVmin.hyp-pen}
\forall \tau \in \cT_n, \qquad \pen(\tau) \geq \frac{C M^2}{n} \crochj{ \log \binom{n-1}{D_{\tau} - 1} + D_{\tau} }
\enspace . 
\end{align}
Then, some numerical constant $L_1>0$ exists such that the following holds\textup{:} 
if $C \geq L_1$, for every $y \geq 0$, 
an event of probability at least $1 - \e^{-y}$ exists on which, for every 
\begin{align}
\label{eq.thm.oracle-ineq.sansVmin.def-tauhat}
\tauh \in \argmin_{\tau \in \cT_n} \set{ \Remp{\tau} + \pen(\tau) }
\enspace , 
\end{align}
we have 
\begin{equation}
\label{eq.thm.oracle-ineq.sansVmin.resultat}
\Risk{\ERM_{\tauh}}  
\leq  2 \inf_{\tau\in \cT_n} \setb{   \Risk{\ERM_{\tau}} + \pen(\tau) } +  \frac{83 y M^2 } {n} 
\enspace .  
\end{equation}
\end{theorem}
Theorem~\ref{thm.oracle.large} is proved in Section~\ref{sec.proof.thm.oracle.large}. 
In addition, Section~\ref{sec.proof.outline} provides some insight about the construction of the penalty 
suggested by Eq.~\eqref{eq.thm.oracle-ineq.sansVmin.hyp-pen}. 
In a few words, the idea is to take a penalty such that the empirical criterion 
$\Remp{\tau} + \pen(\tau)$ in Eq.~\eqref{eq.thm.oracle-ineq.sansVmin.def-tauhat} 
mimics (approximately) the oracle criterion $\Risk{\ERM_{\tau}}$. 
At least, the penalty must be \emph{large enough} so that 
$\Remp{\tau} + \pen(\tau) \geq \Risk{\ERM_{\tau}}$ holds true 
\emph{simultaneously} for all $\tau \in \cT_n$ 
(up to technical details, see Section~\ref{sec.mainproofs}).

\medbreak

%
Theorem~\ref{thm.oracle.large} applies to the segmentation $\tauh$ provided by KCP when $c_1,c_2 \geq L_1 M^2$. 
%
%
Theorem~\ref{thm.oracle.large} shows that $\ERM_{\tauh}$ estimates well the ``mean'' $\bayes\in \H^n$ of the transformed time 
series $Y_1 = \Phi(X_1), \ldots, Y_n = \Phi(X_n)$. 
Such a non-asymptotic oracle inequality is the usual way to theoretically validate a model selection procedure \citep[for instance]{Bir_Mas:2002}. 
It is therefore a natural way to theoretically validate any change-point detection procedure based on model selection. 
As argued by \citet{Leb:2005} for instance, 
proving such a non-asymptotic result is necessary for taking into account situations where some changes 
are too small to be detected ---they are ``below the noise level''.
By defining the performance of $\tauh$ as the quadratic risk of $\ERM_{\tauh}$ as an estimator of $\bayes$, 
a non-asymptotic oracle inequality such as Eq.~\eqref{eq.thm.oracle-ineq.sansVmin.resultat} is the natural way to prove that 
KCP works well for finite sample size 
and for a set $\X$ that can have a large dimensionality 
(possibly much larger than the sample size~$n$). 
The consistency of KCP for estimating the change-point locations, which is outside the scope of this paper, 
is discussed in Section~\ref{sec.conclu.consistency}.

%
%
The constant $2$ in front of the first term in Eq.~\eqref{eq.thm.oracle-ineq.sansVmin.resultat} has no special meaning, and could be replaced by any quantity strictly larger than $1$, at the price of enlarging $L_1$ and $83$. 

\medbreak

%
The value $2 L_1 M^2$ suggested by Theorem~\ref{thm.oracle.large} for the constants $c_1,c_2$ within KCP 
should not be used in practice because it is likely to lead to a conservative choice for two reasons. 
%
First, the minimal value $L_1$ for the constant $C$ suggested by the proof of Theorem~\ref{thm.oracle.large} depends on the numerical constants appearing in the deviation bounds of Propositions~\ref{pro.conc.quad} and~\ref{prop.concentration.lineaire}, which probably are not optimal. 
%
%
Second, the constant $M^2$ in the penalty is probably pessimistic in several frameworks. 
For instance with the linear kernel and Gaussian data belonging to $\X=\R$, 
\eqref{hyp.donnees-bornees} is not satisfied, but other similar oracle inequalities have been proved 
with $M^2$ replaced by the residual variance \citep{Leb:2005}. 
%
%
In practice, as we do in the experiments of Section~\ref{sec.synthetic.data}, 
we recommend to use a data-driven value for the leading constant $C$ in the penalty, as explained in Section~\ref{sec.practice}.

%
Theorem~\ref{thm.oracle.large} also applies to KCP with 
simplified penalty shapes. 
Indeed, for any $D \in \sets{1, \ldots, n}$, 
\[ 
\binom{n-1}{D - 1} 
= \frac{D}{n} \binom{n}{D} 
\leq \binom{n}{D} 
\leq \frac{n^D}{D!} 
\leq \paren{ \frac{n \e}{D} }^D
\]
so that Theorem~\ref{thm.oracle.large} applies to the penalty 
$\frac{D}{n} \sparen{ c_1 \log\sparen{\frac{n}{D}} + c_2} $ 
---similar to the one of \citet{Leb:2005}--- 
as soon as $c_1,c_2 \geq 2 L_1 M^2$. 
A BIC-type penalty $C D \log(n) / n$ 
is also covered by Theorem~\ref{thm.oracle.large} 
provided that $C \geq 3.9 L_1 M^2$ and $n \geq 2$, 
even if we do not recommend to use it given our experiments 
---see Section~\ref{sec.synthetic.data.results}.

%
A nice feature of Theorem~\ref{thm.oracle.large} is that it holds under mild assumptions: 
we only need the data $X_i$ to be independent and to have \eqref{hyp.donnees-bornees} 
satisfied. 
As noticed in Section~\ref{sec.oracle.hyp}, 
\eqref{hyp.donnees-bornees} holds true for translation-invariant kernel
such as the Gaussian and Laplace kernels. 
Compared to previous results \citep{Com_Roz:2004,Leb:2005}, 
we do not need the data to be Gaussian or homoscedastic. 
Furthermore, the independence assumption can certainly be relaxed: 
to do so, it would be sufficient to prove 
concentration inequalities similar to 
Propositions~\ref{pro.conc.quad} and~\ref{prop.concentration.lineaire}
for some dependent $X_i$. 

\medbreak 

%
%
In the particular setting where $\X=\R$ and $k$ is the linear kernel $(x,y) \mapsto xy$, 
Theorem~\ref{thm.oracle.large} provides an oracle inequality similar to the one proved by \citet{Leb:2005} for Gaussian and homoscedastic real-valued data. 
The price to pay for extending this result to heteroscedastic Hilbert-valued data is rather mild: 
we only assume \eqref{hyp.donnees-bornees} 
and replace the residual variance by $M^2$.

%
Apart from the results already mentioned, 
a few oracle inequalities have been proved for change-point procedures, 
for real-valued data with a multiplicative penalty \citep{Baraud_Giraud_HUet:2009}, 
for discrete data \citep{Aka:2011}, 
for counting data with a total-variation penalty \citep{Ala_Gai_Gui:2015}, 
for counting data with a penalized maximum-likelihood procedure \citep{Cleynen_Lebarbier:2014} 
and for data distributed according to an exponential family \citep{Cle_Leb:2014b}. 
Among these oracle inequalities, 
only the result by \citet{Aka:2011} is more precise than Theorem~\ref{thm.oracle.large} (there is no $\log(n)$ factor compared to the oracle loss), 
at the price of using a smaller (dyadic) collection of possible segmentations, hence a worse oracle performance in general.

\section{Main proofs} \label{sec.mainproofs} 

We now prove the main results of the paper, 
Theorem~\ref{thm.oracle.large} and Proposition~\ref{pro.conc.quad}. 

\subsection{Outline of the proof of Theorem~\ref{thm.oracle.large}} \label{sec.proof.outline}

As usual for proving an oracle inequality \cite[see][Section~2.2]{Arl:2014:hdr}, 
we remark that 
by Eq.~\eqref{eq.thm.oracle-ineq.sansVmin.def-tauhat}, for every $\tau \in \cT_n$, 
\[ 
\Remp{\ERM_{\tauh}} + \pen(\tauh) \leq \Remp{\ERM_{\tau}} + \pen(\tau) 
\enspace . 
\]
Therefore, 
\begin{gather}
 \label{eq.pr.thm.oracle-ineq.sansVmin.1}
\Risk{\ERM_{\tauh}} + \pen(\tauh) - \penid(\tauh) 
\leq \Risk{\ERM_{\tau}} + \pen(\tau) - \penid(\tau)
\\
\text{where} \qquad 
\forall \tau \in \cT, \qquad 
\penid(\tau) \egaldef \Risk{\ERM_{\tau}} - \Remp{\ERM_{\tau}} + \frac{1}{n} \norm{\varepsilon}^2
\label{eq.ideal.penalty}
\enspace . 
\end{gather}
The idea of the proof is that if we had $\pen(\tau) \geq \penid(\tau)$ for every $\tau \in \cT_n$, we would get an oracle 
inequality similar to Eq.~\eqref{eq.thm.oracle-ineq.sansVmin.resultat}. 
What remains to obtain is a deterministic upper bound on the \emph{ideal penalty} $\penid(\tau)$ that holds true simultaneously for all $\tau \in \cT_n$ on a large probability event. 
To this aim, our approach is to compute 
$\E\crochs{\penid(\tau)}$ and to show that 
$\penid(\tau)$ concentrates around its expectation for every $\tau \in \cT_n$ (Sections~\ref{sec.mainproofs.penid}--\ref{sec.proofs.quad}). Then we use a union bound as detailed in Section~\ref{sec.proof.thm.oracle.large}.
A similar strategy has been used for instance 
by \citet{Com_Roz:2004} and \citet{Leb:2005} 
in the specific context of change-point detection.

Note that we prove below a slightly weaker result than $\pen(\tau) \geq \penid(\tau)$, which is nevertheless 
sufficient to obtain Eq.~\eqref{eq.thm.oracle-ineq.sansVmin.resultat}. 
Remark also that Eq.~\eqref{eq.pr.thm.oracle-ineq.sansVmin.1} would be true if the constant 
$n^{-1} \norm{\varepsilon}^2$ in the definition \eqref{eq.ideal.penalty} of $\penid$ was replaced 
by any quantity independent from $\tau$; the reasons for this specific choice appear in the computations below.

\subsection{Computation of the ideal penalty} \label{sec.mainproofs.penid} 

From Eq.~\eqref{eq.ideal.penalty} it results that for every 
$\tau \in \cT_n$, 
\begin{align}
\notag 
n\times\penid(\tau)  & = \norm{\ERM_{\tau} - \bayes}^2 - \norm{\ERM_{\tau} - Y}^2 + \norm{\varepsilon}^2
\\
\notag 
&= 
\norm{\ERM_{\tau} - \bayes}^2 - \norm{\ERM_{\tau} - \bayes - \varepsilon}^2 + \norm{\varepsilon}^2
\\
\notag 
&= 2 \prodscal{\ERM_{\tau} - \bayes}{\varepsilon}
\\
\notag 
&= 2 \prodscalb{\Pi_{\tau} (\bayes + \varepsilon) - \bayes}{\varepsilon}
\\
\notag 
&= 2 \prodscal{\Pi_{\tau} \bayes - \bayes}{\varepsilon}
+ 2 \prodscal{\Pi_{\tau} \varepsilon}{\varepsilon}
\\
&= 2 \prodscal{\Pi_{\tau} \bayes - \bayes}{\varepsilon}
+ 2 \norm{ \Pi_{\tau} \varepsilon}^2 
\label{eq.penid}
\end{align}
since $\Pi_{\tau}$ is an orthogonal projection. 
The next two sections focus separately 
on the two terms appearing in Eq.~\eqref{eq.penid}.

\subsection{Concentration of the linear term}
\label{sec.mainproofs.linear} 

We prove in Section~\ref{sec.proof.conc-linear} the following concentration inequality for the linear term in Eq.~\eqref{eq.penid}, mostly by applying Bernstein's inequality. 
\begin{proposition}[Concentration of the linear term] \label{prop.concentration.lineaire}
%
If \eqref{hyp.donnees-bornees} holds true, then for every $x > 0$, with probability at least $1-2 \e^{-x}$, 
\begin{equation}
\forall \theta > 0 \, , \quad 
\absB{\prodscalb{(I - \Pi_{\tau}) \bayes}{\Phi(\mathbf{X})-\bayes}} \leq \theta \norm{\Pi_{\tau}\bayes - \bayes}^2 + \paren{ \frac{\vmax}{2 \theta} + \frac{4 M^2}{3} } x \enspace . 
\end{equation}
\end{proposition}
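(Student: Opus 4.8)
The plan is to apply a Bernstein-type inequality to the real-valued random variable $Z \egaldef \prodscal{(I-\Pi_m)\bayes}{\varepsilon}$, then optimize a Young-type inequality in $\theta$. First I would write $(I-\Pi_m)\bayes = \bayes - \bayes_m$, which is a fixed (non-random) element of $\H^n$; call it $w$, so that $Z = \prodscal{w}{\varepsilon} = \sum_{i=1}^n \prodH{w_i}{\varepsilon_i}$. Since the $\varepsilon_i = Y_i - \bayes_i$ are independent with $\E[\varepsilon_i]=0$, $Z$ is a sum of independent, centered real random variables $Z_i \egaldef \prodH{w_i}{\varepsilon_i}$. I would bound each $Z_i$: by Cauchy--Schwarz, $|Z_i| \leq \normH{w_i}\,\normH{\varepsilon_i}$, and under \eqref{hyp.donnees-bornees} we have $\normH{\varepsilon_i} = \normH{Y_i - \bayes_i} \leq \normH{Y_i} + \normH{\bayes_i} \leq 2M$ a.s. (using $\normH{\bayes_i} = \normH{\E[Y_i]} \leq \E\normH{Y_i} \leq M$). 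So $|Z_i| \leq 2M\normH{w_i}$. For the variance, $\E[Z_i^2] = \E[\prodH{w_i}{\varepsilon_i}^2] \leq \normH{w_i}^2\,\E\normH{\varepsilon_i}^2 = \normH{w_i}^2 v_i \leq \vmax \normH{w_i}^2$. Summing, $\sum_i \E[Z_i^2] \leq \vmax \sum_i \normH{w_i}^2 = \vmax \norm{w}^2 = \vmax\norm{\bayes_m - \bayes}^2$, and the uniform bound on the summands is $2M \max_i \normH{w_i} \leq 2M\norm{w}$ — but here it is cleaner to keep the per-term Bernstein bound $b \egaldef 2M\norm{\bayes_m-\bayes}$ from $\max_i\normH{w_i} \le \norm{w}$ (or even just use $\max_i \normH{w_i}$ and bound it later), so that Bernstein's inequality gives, for all $x>0$, with probability at least $1 - 2e^{-x}$,
\[
|Z| \leq \sqrt{2\vmax \norm{\bayes_m - \bayes}^2\, x} + \frac{2M\norm{\bayes_m-\bayes}}{3}\,x \enspace .
\]

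Next I would convert this into the stated form using $\sqrt{2ab} \le \theta a + b/(2\theta)$ (AM--GM), applied twice. With $a = \norm{\bayes_m-\bayes}^2$: the first term $\sqrt{2 a \vmax x} \le \theta a + \vmax x/(2\theta)$. For the second term, which is of order $\norm{\bayes_m-\bayes} \cdot x$ and only linear in $\norm{\bayes_m-\bayes}$, I would again use a weighted AM--GM, e.g. $2M\norm{\bayes_m-\bayes} x/3 \le \theta' \norm{\bayes_m-\bayes}^2 + (M x)^2/(9\theta')$ for any $\theta'>0$; choosing $\theta'$ appropriately (for instance absorbing it into the final $\theta$ and tidying the constant) yields a remainder term of the form $c M^2 x$. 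Collecting terms and being slightly generous with constants gives
\[
|Z| \leq \theta \norm{\bayes_m - \bayes}^2 + \paren{\frac{\vmax}{2\theta} + \frac{4M^2}{3}}x
\]
for every $\theta > 0$, on the same event (the event does not depend on $\theta$, since it is the Bernstein event for $Z$). One subtlety: to get the clean $4M^2/3$ I would need to handle the degenerate case $\norm{\bayes_m-\bayes}=0$ separately (then $Z=0$ deterministically and there is nothing to prove), and to bound $\max_i \normH{w_i}$ by $\norm{w}$, which is legitimate since each coordinate norm is at most the full norm.

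The main obstacle is minor and purely bookkeeping: getting the remainder constant to come out as exactly $\tfrac{4M^2}{3}$ (rather than some larger absolute multiple of $M^2$) requires choosing the auxiliary AM--GM weight carefully and noting that the linear-in-$\norm{\bayes_m-\bayes}$ Bernstein term already carries the factor $\tfrac13$, so that one should split it as $\tfrac{2M}{3}\norm{\bayes_m-\bayes}\,x = \tfrac{2M}{3}\cdot\big(\sqrt{\alpha}\norm{\bayes_m-\bayes}\big)\cdot\big(x/\sqrt\alpha\big)$ and optimize. There is no real analytic difficulty — the only genuine input is Bernstein's inequality for bounded independent real variables applied to $\prodH{w_i}{\varepsilon_i}$, and everything else is Cauchy--Schwarz, the a.s. bound $\normH{\varepsilon_i}\le 2M$, and elementary inequalities.
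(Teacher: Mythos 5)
Your overall strategy is exactly the paper's: write $\prodscal{(I-\Pi_m)\bayes}{\varepsilon}=\sum_i Z_i$ with $Z_i=\prodH{(\bayes-\bayes_m)_i}{\varepsilon_i}$ independent and centered, bound the variance by $\vmax\norm{\bayes_m-\bayes}^2$ via Cauchy--Schwarz and $\normH{\varepsilon_i}\le 2M$, apply Bernstein, and use $2ab\le \theta a^2+\theta^{-1}b^2$ on the square-root term. However, there is a genuine gap in your treatment of the Bernstein range term. You bound the summands by $|Z_i|\le 2M\max_j\normH{w_j}\le 2M\norm{w}$ with $w=\bayes-\bayes_m$, which makes the Bernstein constant $c=\tfrac{2M}{3}\norm{w}$ and produces the deviation term $\tfrac{2M}{3}\norm{w}\,x$. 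Your proposed second AM--GM, $\tfrac{2M}{3}\norm{w}x\le \theta'\norm{w}^2+\tfrac{M^2x^2}{9\theta'}$, cannot be ``tidied'' into the stated form: to make the remainder linear in $x$ you must take $\theta'$ proportional to $x$, and then the coefficient $\theta'$ multiplying $\norm{w}^2$ depends on $x$ and cannot be absorbed into an arbitrary $\theta>0$ (the claimed inequality must hold simultaneously for all $\theta$, including $\theta\to 0$, where it would force $\norm{w}\le 2M$ --- false in general since $\norm{w}$ can be of order $M\sqrt{n}$). So as written your argument does not reach the constant $\tfrac{4M^2}{3}x$, nor any remainder of the form $cM^2x$.

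The missing observation, which is what the paper's Lemma~\ref{lem.upperbounds.Bernstein.Linear} supplies, is a \emph{coordinatewise} bound: $(\bayes_m)_i=\tfrac{1}{\card(\lambda(i))}\sum_{j\in\lambda(i)}\bayes_j$ is an average of the $\bayes_j$, each of norm at most $M$ under \eqref{hyp.donnees-bornees}, so $\normH{(\bayes-\bayes_m)_i}\le 2M$ and hence $|Z_i|\le 4M^2$ almost surely, uniformly in $i$ and independently of $\norm{w}$. Bernstein's inequality (Theorem~\ref{thm.Bernstein.ineq}, with $3c=4M^2$) then directly gives the deviation $\sqrt{2\vmax\norm{w}^2x}+\tfrac{4M^2}{3}x$, and only the single AM--GM on the square-root term is needed; no second AM--GM and no quadratic-in-$x$ term ever appears. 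Replacing your bound $\max_i\normH{w_i}\le\norm{w}$ by $\max_i\normH{w_i}\le 2M$ repairs the proof completely and reduces it to the paper's.
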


\subsection{Dealing with the quadratic term} \label{sec.proofs.quad}

We now focus on the quadratic term in the right-hand side of Eq.~\eqref{eq.penid}. 

\subsubsection{Preliminary computations}
We start by providing a useful closed-form formula for 
$\norms{\Pi_{\tau} \varepsilon}^2$ and by computing 
its expectation. 
First, a straightforward consequence of Eq.~\eqref{eq.regressogram.Hilbert} is that
\begin{align}
\label{eq.norm-proj-histo}
\norm{\Pi_\tau \varepsilon}^2
&= \sum_{\ell =1}^{D_\tau} \crochj{ \frac{1}{\tau_{\ell}-\tau_{\ell-1}} \normHBb{\sum_{i =\tau_{\ell-1}+1}^{\tau_{\ell}} \varepsilon_i}^2}  
\\
&= \sum_{\ell=1}^{D_\tau} \crochBb{ \frac{1}{\tau_{\ell}-\tau_{\ell-1}} \sum_{\tau_{\ell-1}+1 \leq i,j \leq \tau_{\ell}} \prodH{\varepsilon_i}{\varepsilon_j} } . \label{eq.formule.terme-quad}
\end{align}

Second, we remark that 
for every $i,j \in \set{1 , \ldots, n} $, 
\begin{align}
\notag 
\E\croch{ \prodH{\varepsilon_i}{\varepsilon_j} } 
&= \E\croch{ \prodH{\Phi(X_i)}{\Phi(X_j)} } -  \E\croch{ \prodH{\bayes_i}{\Phi(X_j)} } - \E\croch{ \prodH{\Phi(X_i)}{\bayes_j} } + \prodH{\bayes_i}{\bayes_j} 
\\
\notag 
&= \E\croch{ \prodH{\Phi(X_i)}{\Phi(X_j)} } - \prodH{\bayes_i}{\bayes_j} 
\\
&= \un_{i=j} \paren{ \E\croch{k(X_i,X_i)} - \normH{\bayes_i}^2 } 
= \un_{i=j} v_i .
\label{eq.prodeps} 
\end{align}

Combining Eq.~\eqref{eq.formule.terme-quad} and~\eqref{eq.prodeps}, 
we get 
\begin{align}
\E\croch{ \norm{\Pi_\tau \varepsilon}^2 }
&= \sum_{\ell=1}^{D_\tau} \croch{ \frac{1}{\tau_{\ell}-\tau_{\ell-1}} \sum_{i =\tau_{\ell-1}+1}^{\tau_{\ell}} v_i }  
= \sum_{\ell=1}^{D_\tau} v_{\ell}^\tau \enspace,
\label{eq.E-terme-quadratique}
\end{align}
where $v_{\ell}^{\tau} \egaldef \frac{1}{\tau_{\ell}-\tau_{\ell-1}} \sum_{i =\tau_{\ell-1}+1}^{\tau_{\ell}} v_i$.

\subsubsection{Concentration: proof of Proposition~\ref{pro.conc.quad}}
\label{sec.proof.pro.conc.quad}
This proof is inspired from that of a concentration inequality by \cite{Sauv_2009} in the context of regression with real-valued non-Gaussian noise.
Let us define 
\begin{align*} 
T_{\ell} \egaldef \frac{1}{\tau_{\ell}-\tau_{\ell-1}}  \normH{\sum_{j = \tau_{\ell-1}+1}^{\tau_{\ell}}\varepsilon_j}^2  ,
\qquad \text{so that}  \qquad 
 \norm{\Pi_{\tau} \varepsilon}^2 
 =  \sum_{1\leq \ell\leq D_\tau} T_{\ell} 
\end{align*}
by Eq.~\eqref{eq.norm-proj-histo}. 
Since the real random variables 
$(T_{\ell})_{1\leq \ell \leq D_{\tau}} $ are independent, we get a concentration inequality for 
their sum $\norm{\Pi_{\tau} \varepsilon}^2$ via Bernstein's inequality (Theorem~\ref{thm.Bernstein.ineq}) as long as $T_{\ell}$ satisfies some moment conditions. 
The rest of the proof consists in showing such moment bounds
by using  Pinelis-Sakhanenko's deviation inequality (Proposition~\ref{prop.Pinelis.Sakhanenko}). 

First, note that \eqref{hyp.donnees-bornees} implies that 
$\normHs{\varepsilon_i}\leq 2M$ almost surely for every $i$ by Lemma~\ref{lem.upperbounds.Bernstein.Linear}, 
hence $\normHs{\sum_{i=\tau_{\ell-1}+1}^{\tau_{\ell}} \varepsilon_i} \leq 2 (\tau_{\ell}-\tau_{\ell-1}) M$ a.\@s.\@ for every $1\leq \ell \leq D_\tau$. 
Then for every $q\geq 2$ and $1\leq \ell\leq D_\tau$,  
\begin{align}
\E \croch{ T_{\ell}^q } 
&
= \frac{1}{(\tau_{\ell}-\tau_{\ell-1})^q} \int_0^{2 (\tau_{\ell}-\tau_{\ell-1}) M} 2q x^{2q-1}  \P\croch{\normH{\sum_{i = \tau_{\ell-1}+1}^{\tau_{\ell}}\varepsilon_i} \geq x} \d x 
\enspace .
\label{eq.pr.prop.quadratic.sansVmin.1}
\end{align}

Second, since $\normH{\varepsilon_i}\leq 2M$ almost surely and 
$\E\croch{ \normH{\varepsilon_i}^2 } = v_i \leq M^2$ for every $i$, 
we get that for every $p \geq 2$ and $1\leq \ell\leq D_\tau$, 
\begin{align*}
\sum_{i=\tau_{\ell-1}+1}^{\tau_{\ell}} \E\croch{ \normH{\varepsilon_i}^p} 
%
&\leq \frac{p!}{2} \paren{\sum_{j=\tau_{\ell-1}+1}^{\tau_{\ell}} v_j} \paren{\frac{2M}{3}}^{p-2} 
\leq \frac{p!}{2} \times (\tau_{\ell}-\tau_{\ell-1}) M^2 \times \paren{\frac{2M}{3}}^{p-2} 
\enspace .
\end{align*}
Hence, the assumptions of Pinelis-Sakhanenko's deviation inequality 
\citep{Pin:Sak:1986} ---which is recalled by Proposition~\ref{prop.Pinelis.Sakhanenko}--- are 
satisfied with $c=2M/3$ and $\sigma^2= (\tau_{\ell}-\tau_{\ell-1}) M^2$, 
and we get that for every $x \in [0,\ 2 (\tau_{\ell}-\tau_{\ell-1}) M]$  
\begin{align*}
\P\parenj{\normH{\sum_{i= \tau_{\ell-1}+1}^{\tau_{\ell}}\varepsilon_i} \geq x}
&\leq 
2 \exp\parenj{ - \frac{x^2}{2 \crochj{ (\tau_{\ell}-\tau_{\ell-1}) M^2 + \frac{2 M x}{3} }} }
\\
&\leq 
2 \exp\parenj{ - \frac{3 x^2}{14 (\tau_{\ell}-\tau_{\ell-1}) M^2 }}
\enspace . 
\end{align*}
Together with Eq.~\eqref{eq.pr.prop.quadratic.sansVmin.1}, 
we obtain that 
\begin{align}
\notag 
\E \croch{ T_{\ell}^q } 
&  \leq \frac{4 q}{(\tau_{\ell}-\tau_{\ell-1})^q} \int_0^{2 (\tau_{\ell}-\tau_{\ell-1}) M} x^{2q-1}  \exp \croch{ -\frac{3 x^2}{14 (\tau_{\ell}-\tau_{\ell-1}) M^2 }}
 \d x 
\\
\notag 
%
&\leq 
4 q \paren{\frac{7 M^2}{3}}^q 
\int_0^{+\infty} u^{2q-1}  \exp \croch{ -\frac{u^2}{2}} \d u 
\\
\notag 
&= 
2^{q-1} (q-1)! \times 4 q \paren{\frac{7 M^2}{3}}^q 
\\
&= 
2 \times (q!)  \croch{ \frac{14 M^2}{3} }^{q} 
\enspace ,
\label{eq.pr.prop.quadratic.sansVmin.2}
\end{align}
since for every  $q \geq 1$, 
\begin{equation*}
 \int_0^{+\infty} u^{2q-1} \exp(-u^2/2) \d u = 2^{q-1} (q-1)! \enspace.
\end{equation*}
Finally summing Eq.~\eqref{eq.pr.prop.quadratic.sansVmin.2} over $1\leq \ell \leq D_\tau$, it comes 
\begin{align*}
\sum_{1\leq \ell\leq D_\tau} \E \croch{ T_{\ell}^q } 
&\leq 
2 \times (q!)  \croch{ \frac{14 M^2}{3} }^{q} D_{\tau}
\\
&= 
\frac{q!}{2} \times D_{\tau} \croch{ \frac{28 M^2}{3} }^2 \times \croch{ \frac{14 M^2}{3} }^{q-2}
\enspace .
\end{align*}
Then, condition \eqref{constraint.Bernstein.moment} of 
Bernstein's inequality holds true with 
\[ 
v = D_{\tau} \croch{ \frac{28 M^2}{3} }^2 
\quad \text{and} \quad 
c = \frac{14 M^2}{3}
\enspace . \]
Therefore, Bernstein's inequality \citep[Proposition~2.9]{Mas:2003:St-Flour} 
---which is recalled by Proposition~\ref{thm.Bernstein.ineq}--- shows 
that for every $x>0$, with probability at least $1- \e^{-x}$, 
\begin{align*}
 \norm{\Pi_{\tau} \varepsilon}^2  - \E\croch{ \norm{\Pi_{\tau} \varepsilon}^2  } 
& \leq  
\sqrt{2 v x} + c x
\\
&= 
\sqrt{2 D_{\tau} x} \frac{28 M^2}{3} + \frac{14 M^2}{3} x
\\
&= 
\frac{14 M^2}{3} \paren{ 2 \sqrt{2 D_{\tau} x} +  x }
\enspace .  \qquad \hfill \blackbox
\end{align*}

\subsubsection{Why do we need a new concentration inequality?}
\label{sec.Talagrand}

We now review previous concentration results for quantities 
such as $\norms{\Pi_{\tau} \varepsilon}^2$ or $\norms{\Pi_{\tau} \varepsilon}$, 
showing that they are not sufficient for our needs, 
hence requiring a new result such as Proposition~\ref{pro.conc.quad}. 

%
First, when $\varepsilon \in \R^n$ is a Gaussian isotropic vector, 
$\norms{\Pi_{\tau} \varepsilon}^2$ is a chi-square random variable 
for which concentration tools have been developed. 
Such results have been used by \citet{Bir_Mas:2002} and 
by \citet{Leb:2005} for instance. 
They cannot be applied here since $\varepsilon$ cannot be assumed Gaussian, 
and the $\varepsilon_j$ do not necessarily have the same variance. 

%
Second, Eq.~\eqref{eq.formule.terme-quad} shows that 
$ \norms{\Pi_{\tau} \varepsilon}^2 $ is a U-statistic of order 2. 
Some tight exponential concentration inequalities exist for such 
quantities when $\varepsilon_j \in \R$ \citep{Hou_Rey:2003} 
and when $\varepsilon_j$ belongs to a general measurable set \citep[Theorem~3.4.8]{Gin_Nic:2016}. 
In both results, a term of order $M^2 x^2$ appears in the deviations, 
which is too large because the proof of Theorem~\ref{thm.oracle.large} 
relies on Proposition~\ref{pro.conc.quad} with $x \gg D_{\tau}$:  
we really need a smaller deviation term, 
as in Proposition~\ref{pro.conc.quad} where it is proportional to $M^2 x$.

%
Third, since 
\begin{align*}
\norms{\Pi_\tau \varepsilon} 
= \sup_{f\in\H^n,\norm{f}=1} \absb{\prodscal{f}{\Pi_\tau\varepsilon}} 
= \sup_{f\in\H^n,\norm{f}=1} \absb{\sum_{i=1}^n \prodH{f_i}{(\Pi_\tau\varepsilon)_i}}
\enspace , 
\end{align*}
Talagrand's inequality~\cite[Corollary~12.12]{Bou_Lug_Mas:2011:livre} 
provides a concentration inequality for $\norms{\Pi_\tau \varepsilon}$ 
around its expectation. 
More precisely, we can get the following result, 
which is proved in supplementary material (Section~\ref{app.Talagrand-details}). 
\begin{proposition}
\label{pro.concentration.quadratic.talagrand}
If \eqref{hyp.donnees-bornees} holds true, then for every $x>0$ with probability at least $1-2 \e^{-x}$, 
\begin{equation} 
\label{eq.pro.concentration.quadratic.talagrand.1}
\absb{ \norms{\Pi_{\tau} \varepsilon } - \E\crochb{\norms{\Pi_{\tau} \varepsilon }} }
\leq \sqrt{ 2 x \paren{ 4 M  \E\crochb{\norms{\Pi_{\tau} \varepsilon }} + \max_{1\leq \ell \leq D_\tau}  v_{\ell}^{\tau} }} + \frac{2 M x}{3} 
\enspace . 
\end{equation} 
\end{proposition}
Therefore, in order to get a concentration inequality for $\norms{\Pi_{\tau} \varepsilon}^2$, 
we have to square Eq.~\eqref{eq.pro.concentration.quadratic.talagrand.1} and we necessarily get 
a deviation term of order $M^2 x^2$. 
As with the U-statistics approach, this is too large for our needs. 

%
Fourth, given Eq.~\eqref{eq.norm-proj-histo}, 
it is also natural to think of 
Pinelis-Sakhanenko's inequality~\citep{Pin:Sak:1986}, 
but this result alone is not precise enough because it is a 
\emph{deviation} inequality, 
and not a \emph{concentration} inequality. 
It is nevertheless a key ingredient in our proof of Proposition~\ref{pro.conc.quad}. 

\subsection{Oracle inequality: proof of Theorem~\ref{thm.oracle.large}}
\label{sec.proof.thm.oracle.large}

We now end the proof of Theorem~\ref{thm.oracle.large} as explained in Section~\ref{sec.proof.outline}. 

\noindent {\bf Upper bound on $\penid(\tau)$ for every $\tau \in \cT_n$.} 
First, by Eq.~\eqref{eq.penid} for every $\tau \in \cT_n$, 
\begin{equation}
\penid(\tau)
= \frac{1}{n} \paren{ \norm{\ERM_{\tau} - \mu^{\star}}^2 - \norm{\ERM_{\tau} - Y}^2 + \norm{\varepsilon}^2 }
= \frac{2}{n} \norm{ \Pi_{\tau} \varepsilon }^2 - \frac{2}{n} \prodscalb{ (I - \Pi_{\tau}) \mu^{\star} }{\varepsilon}
\enspace . 
 \label{eq.pr.thm.oracle-ineq.sansVmin.2}
\end{equation}
In other words, $\penid(\tau)$ is the sum of two terms, for which Propositions~\ref{pro.conc.quad} and~\ref{prop.concentration.lineaire} 
provide concentration inequalities. 

On the one hand, by Proposition~\ref{pro.conc.quad} under \eqref{hyp.donnees-bornees}, 
for every $\tau \in \cT_n$ and $x \geq 0$, with probability at least $1- \e^{-x}$ we have 
\begin{align}
\frac{2}{n} \norm{ \Pi_{\tau} \varepsilon }^2
&\leq \frac{2}{n} \paren{ \E\croch{ \norm{ \Pi_{\tau} \varepsilon }^2 } 
+ \frac{14 M^2}{3} \paren{ x + 2 \sqrt{2 x D_{\tau}}} } \label{eq.quad.upper.bound.first}
\\
&\leq \frac{2 M^2}{n} \paren{ D_{\tau} + \frac{14 x}{3} + \frac{28}{3} \sqrt{2 x D_{\tau}} }
 \label{eq.pr.thm.oracle-ineq.sansVmin.3}
\end{align}
since 
\[ 
\E\croch{ \norm{ \Pi_{\tau} \varepsilon }^2  } = \sum_{j=1}^{D_{\tau}} v^{\tau}_{j} \leq D_{\tau} M^2 
\]
by Eq.~\eqref{eq.E-terme-quadratique}. 
On the other hand, by Proposition~\ref{prop.concentration.lineaire} under \eqref{hyp.donnees-bornees}, 
for every $\tau \in \cT_n$ and $x \geq 0$, with probability at least $1- 2 \e^{-x}$ we have 
\begin{align}
\forall \theta>0, \qquad 
\frac{2}{n} \absB{ \prodscalb{ (I - \Pi_{\tau}) \mu^{\star} }{\varepsilon} }
&\leq 
 \frac{2 \theta}{n} \norm{\Pi_{\tau}\bayes - \bayes}^2 + \frac{2}{n} \paren{ \frac{\vmax}{2 \theta} + \frac{4 M^2}{3} } x 
\notag 
\\
&\leq \frac{2 \theta}{n} \norm{\Pi_{\tau}\bayes - \bayes}^2
+ \frac{x M^2}{n} \paren{ \theta^{-1} + \frac{8}{3} }
 \enspace . 
  \label{eq.pr.thm.oracle-ineq.sansVmin.4}
\end{align}

For every $\tau \in \cT_n$ and $x \geq 0$, let $\Omega_x^{\tau}$ be the event on which 
Eq.~\eqref{eq.pr.thm.oracle-ineq.sansVmin.3} and~\eqref{eq.pr.thm.oracle-ineq.sansVmin.4} 
hold true. 
A union bound shows that $\P(\Omega_x^{\tau}) \geq 1 - 3 \e^{-x}$. 
Furthermore, combining Eq.~\eqref{eq.pr.thm.oracle-ineq.sansVmin.2}, \eqref{eq.pr.thm.oracle-ineq.sansVmin.3} and~\eqref{eq.pr.thm.oracle-ineq.sansVmin.4} 
shows that on $\Omega_x^{\tau}$, for every $\theta>0$, 
\begin{align}
\notag 
\penid(\tau)
&\leq \frac{2 M^2}{n} \parenj{ D_{\tau} + \frac{14 x}{3} + \frac{28}{3} \sqrt{2 x D_{\tau}} }
+ \frac{2 \theta}{n} \norms{\Pi_{\tau}\bayes - \bayes}^2
+ \frac{x M^2}{n} \parenj{ \theta^{-1} + \frac{8}{3} }
\\
&\leq 2 \theta  \Risk{\ERM_{\tau}} 
+ \frac{M^2}{n} \crochj{ 2 D_{\tau} + \parenj{ \theta^{-1} + \frac{36}{3} } x + \frac{56}{3} \sqrt{2 x D_{\tau}} }
  \label{eq.pr.thm.oracle-ineq.sansVmin.5}
\end{align}
using that $n^{-1} \norms{\Pi_{\tau}\bayes - \bayes}^2 = \Risks{\Pi_{\tau}\bayes} \leq \Risks{ \ERM_{\tau}}$ 
by definition of the orthogonal projection $\Pi_{\tau}$, 
and 
\begin{align}
\notag 
\penid(\tau)
&\geq - \frac{2}{n} \prodscalb{ (I - \Pi_{\tau}) \mu^{\star} }{\varepsilon}
\\
\notag 
&\geq  - \frac{2 \theta}{n} \norms{\Pi_{\tau}\bayes - \bayes}^2
- \frac{x M^2}{n} \parenj{ \theta^{-1} + \frac{8}{3} }
\\
&\geq 
- 2 \theta  \Risk{\ERM_{\tau}} 
- \frac{x M^2}{n} \parenj{ \theta^{-1} + \frac{8}{3} }
\enspace . 
\label{eq.pr.thm.oracle-ineq.sansVmin.6}
\end{align}

\medskip

\noindent \textbf{Union bound over the models and conclusion.} 
Let $y \geq 0$ be fixed and let us define the event 
$\Omega_y = \bigcap_{\tau \in \cT_n} \Omega_{x(\tau,y)}^{\tau}$ where 
for every $\tau \in \cT_n$, 
\[ x(\tau,y) \egaldef y  + \log\paren{\frac{3}{\e - 1 }} + D_{\tau} + \log \binom{n-1}{D_{\tau} - 1} 
\enspace . \]
Then, since 
\[ \card\set{\tau \in \cT_n\, | \, D_{\tau}=D} = \binom{n-1}{D - 1} \]
for every $D \in \{1, \ldots, n\}$, 
a union bound shows that 
\begin{align*}
\P(\Omega_y) 
\geq 1 - \sum_{\tau \in \cT_n} \P \parenb{ \, \overline \Omega_{x(\tau,y)}^{\tau} }
\geq 
1 - 3 \sum_{D=1}^n \e^{-y - \log\paren{\frac{3}{\e - 1 }} - D}
&
= 1 - (\e - 1) \e^{-y} \sum_{D=1}^n \e^{-D}
\\
&\geq 
1 - \e^{-y}
\enspace . 
\end{align*}

In addition, on $\Omega_y$, for every $\tau \in \cT_n$, 
since Eq.~\eqref{eq.pr.thm.oracle-ineq.sansVmin.5} and~\eqref{eq.pr.thm.oracle-ineq.sansVmin.6} hold true with 
$x=x(\tau,y) \geq D_{\tau}$, 
taking $\theta = 1/6$, we get that 
\begin{align}
- \frac{26}{3} \frac{M^2 x(\tau,y)}{n} 
-\frac{1}{3}  \Risk{\ERM_{\tau}} 
\leq
\penid(\tau)
\leq 
\frac{1}{3}  \Risk{\ERM_{\tau}} 
+ \parenj{ 20 + \frac{56 \sqrt{2}}{3} }  \frac{M^2 x(\tau,y)}{n} 
\enspace . 
\notag 
\end{align}
Let us define 
\[ 
\kappa_1 \egaldef 20 + \frac{56 \sqrt{2}}{3}
\qquad \text{and} \qquad 
\kappa_2 \egaldef \frac{26}{3}
\enspace , 
\]
and assume that $C \geq \kappa_1$. 
Then, using Eq.~\eqref{eq.thm.oracle-ineq.sansVmin.hyp-pen}, 
we have 
\begin{align*}
\penid(\tau)
&\leq 
\frac{1}{3}  \Risk{\ERM_{\tau}} 
+  \pen(\tau) +  \frac{\kappa_1 M^2 \croch{ y + \log(3/(\e-1))}}{n} 
\\
\penid(\tau)
&\geq 
-\frac{1}{3}  \Risk{\ERM_{\tau}} 
- \frac{\kappa_2}{C} \pen(\tau) 
- \frac{\kappa_2 M^2 \croch{ y + \log(3/(\e-1))}}{n}
\enspace . 
\end{align*}
Therefore, by Eq.~\eqref{eq.pr.thm.oracle-ineq.sansVmin.1}, on $\Omega_y$, for every $\tau \in \cT_n$, 
\[
\frac{2}{3} \Risk{\ERM_{\tauh}} -\frac{ \kappa_1 M^2 \croch{ y + \log(3/(\e-1))}}{n} 
\leq \frac{4}{3} \Risk{\ERM_{\tau}} + \paren{1 + \frac{\kappa_2}{C} } \pen(\tau) 
+ \frac{\kappa_2 M^2 \croch{ y + \log(3/(\e-1))}}{n}
\]
hence 
\begin{align*}
\frac{2}{3} \Risk{\ERM_{\tauh}}  
&\leq \frac{4}{3} \Risk{\ERM_{\tau}} + \paren{1 + \frac{\kappa_2}{C} } \pen(\tau) 
+ \frac{(\kappa_1 + \kappa_2) M^2 \croch{ y + \log(3/(\e-1))}}{n}
\\
&\leq \frac{4}{3} \Risk{\ERM_{\tau}} + \paren{1 + \frac{\kappa_2 + (\kappa_1 + \kappa_2) \log(3/(\e-1))}{C} } \pen(\tau) 
+ (\kappa_1 + \kappa_2) \frac{M^2 y}{n}
\end{align*}
since 
$\pen(\tau) \geq C M^2/n$ for every $\tau \in \cT_n$. 
Multiplying both sides by $3/2$, we get that if $C \geq \kappa_1$, on $\Omega_y$, 
\begin{align*}
\Risk{\ERM_{\tauh}}  
&\leq 
\inf_{\tau \in \cT_n} \set{ 2 \Risk{\ERM_{\tau}} 
+ \frac{3}{2} \paren{1 + \frac{\kappa_2 + (\kappa_1 + \kappa_2) \log(3/(\e-1))}{C} } \pen(\tau) 
}
+ \frac{3 (\kappa_1 + \kappa_2)}{2} \frac{M^2 y}{n}
\enspace . 
\end{align*}
Let us finally define 
\[ 
L_1 \egaldef 3 \crochB{ \kappa_2 + (\kappa_1 + \kappa_2) \log\parenb{ 3/(\e-1) } } \geq \kappa_1 
\]
so that 
\[
\frac{3}{2} \paren{1 + \frac{\kappa_2 + (\kappa_1 + \kappa_2) \log(3/(\e-1))}{L_1} } = 2
\enspace . 
\]
Then, we get that if $C \geq L_1$, on $\Omega_y$, 
\begin{align*}
\Risk{\ERM_{\tauh}}  
&\leq 
2 \inf_{\tau \in \cT_n} \set{ \Risk{\ERM_{\tau}} 
+  \pen(\tau) 
}
+ \frac{3 (\kappa_1 + \kappa_2)}{2} \frac{M^2 y}{n}
\end{align*}
and the result follows. \hfill \blackbox


\section{Experiments on synthetic data}
\label{sec.synthetic.data}

This section reports the results of some experiments on synthetic data 
that illustrate the performance of KCP. 

\subsection{Data-generation process}\label{subsubsec.synthetic.framework}

Three scenarios are considered: 
$(i)$ real-valued data with a changing (mean,variance), 
$(ii)$ real-valued data with constant mean and variance, and 
$(iii)$ histogram-valued data as in Example~\ref{ex.histos}.

In the three scenarios, the sample size is $n=1\,000$ 
and the true segmentation $\tau^{\star}$ 
is made of $D^{\star}=11$ segments, with change-points 
$ \tau^{\star}_1 = 100$, $\tau^{\star}_2=130$, $\tau^{\star}_3=220$, $\tau^{\star}_4=320$, $\tau^{\star}_5=370$, $\tau^{\star}_6= 520$,  $\tau^{\star}_7=620$, $\tau^{\star}_8=740$, 
$\tau^{\star}_9=790$, $\tau^{\star}_{10} = 870$ 
(see Figure~\ref{fig.ex}). 
For each sample, we choose randomly the distribution of the $X_i$ 
within each segment of $\tau^{\star}$ as detailed below; 
note that we always make sure that the distribution of $X_i$ 
does change at each change-point $\tau^{\star}_{\ell}$. 

For each scenario, we generate $N=500$ independent samples, 
from which we estimate all quantities that are 
reported in Section~\ref{sec.synthetic.data.results}. 

\paragraph{Scenario~1: Real-valued data with changing (mean, variance).}
The distribution of $X_i \in \R$ 
is randomly picked out from: $\mathcal{B}(10, 0.2)$ (Binomial), $\mathcal{NB}\parens{3, 0.7}$ (Negative-Binomial), $\mathcal{H}(10, 5, 2)$ (Hypergeometric),
$\mathcal{N}(2.5, 0.25)$ (Gaussian), $\gamma\parens{0.5, 5}$ (Gamma), $\mathcal{W}(5,2)$ (Weibull) and $\mathcal{P}ar(1.5,3)$ (Pareto). 
Note that the pair (mean, variance) in each segment changes from that of its neighbors. 
Table~\ref{tab.Sc1.moy-var} summarizes its values.

The distribution within segment $\ell \in \sets{1, \ldots , D^{\star}}$ 
is given by the realization of a random variable 
$S_{\ell} \in \sets{1,\ldots,7}$, 
each integer representing one of the 7 possible distributions. 
The variables $S_{\ell}$ are generated as follows: 
$S_1$ is uniformly chosen among 
$\sets{1,\ldots,7}$, 
and for every $\ell \in \sets{1, \ldots , D^{\star}-1}$, 
given $S_{\ell}$, 
$S_{\ell+1}$ is uniformly chosen among 
$\sets{1,\ldots,7} \backslash \sets{S_{\ell}}$.  
Figure~\ref{fig.ex.Sc1} 
shows one sample generated according to this scenario. 
\begin{figure}
  \centering
  \subfloat[Scenario~1: changes in the pair (mean, variance).]{\label{fig.ex.Sc1} 
  \includegraphics[width = .3\textwidth]{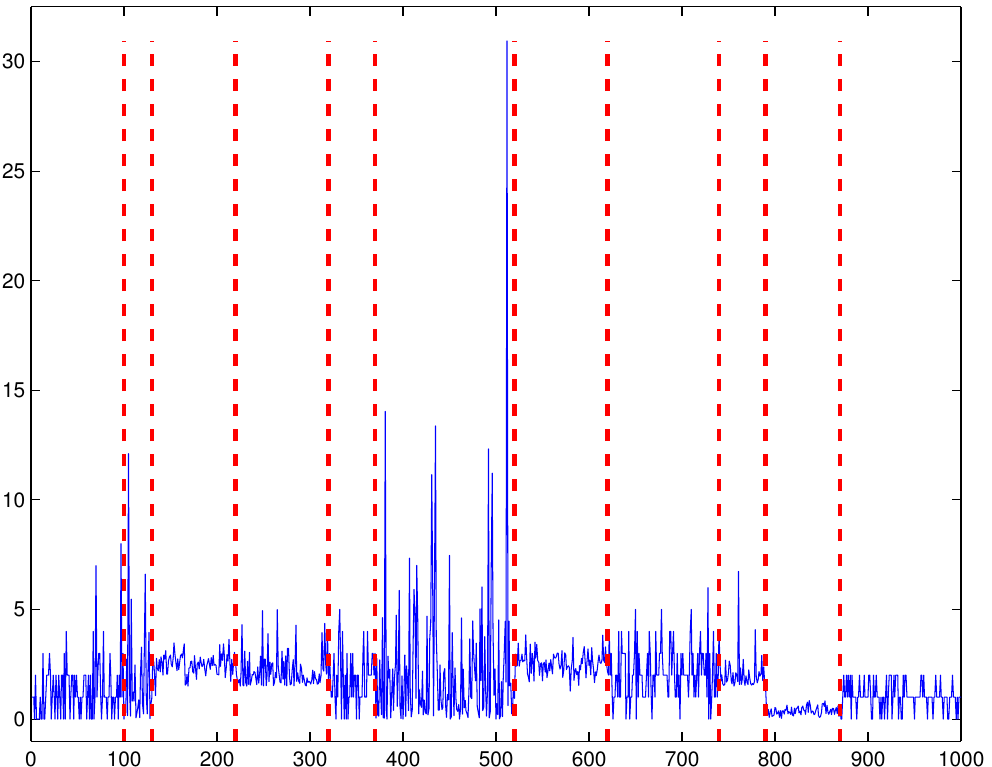}
}
 \hspace*{0.01\textwidth}
  \subfloat[Scenario~2: constant mean and variance.]{\label{fig.ex.Sc2}
  \includegraphics[width = .3\textwidth]{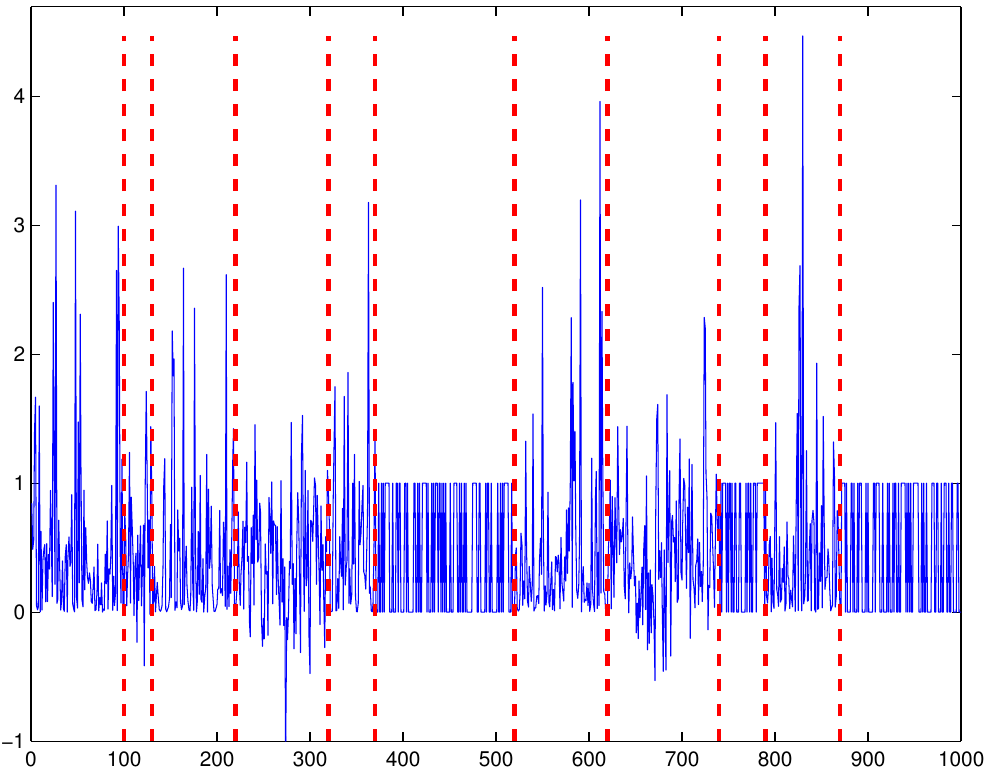}
}
 \hspace*{0.01\textwidth}
  \subfloat[Scenario~3: histogram-valued data (first three coordinates).]{\label{fig.ex.Sc3}
  \includegraphics[width = .3\textwidth]{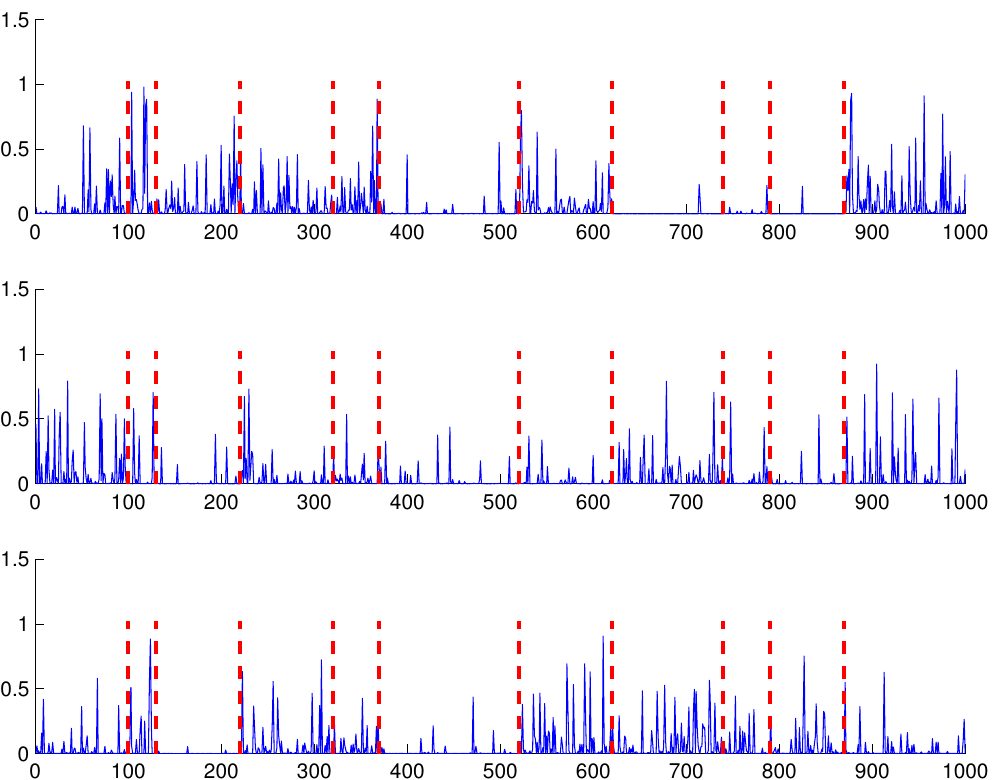}
}
  \caption{Examples of generated signals (blue plain curve) in the three scenarios. 
Red vertical dashed lines visualize the true change-points locations. }
  \label{fig.ex} 
\end{figure}

\paragraph{Scenario~2: Real-valued data with constant mean and variance.}
The distribution of $X_i \in \R$ 
is randomly chosen among (1) $\mathcal{B}(0.5)$ (Bernoulli), (2) $\mathcal{N}(0.5, 0.25)$ (Gaussian) and (3) $\mathcal{E}\paren{0.5}$ (Exponential).
These three distributions have a mean $0.5$ and a variance $0.25$. 

The distribution within segment $\ell \in \sets{1, \ldots , D^{\star}}$ 
is given by the realization of a random variable 
$S_{\ell} \in \sets{1,2,3}$, 
similarly to what is done in Scenario~1 (replacing $7$ by~$3$). 
Figure~\ref{fig.ex.Sc2} 
shows one sample generated according to this scenario. 

\paragraph{Scenario~3: Histogram-valued data.}
The observations $X_i$ belong to the $d$-dimensional simplex with $d=20$
(Example~\ref{ex.histos}), that is, $X_i = (a_1,\ldots,a_d)\in[0,1]^d$ with 
$\sum_{j=1}^d a_j = 1$.
For each $\ell \in \sets{1, \ldots, \Ds}$, we randomly generate $d$ parameter values $p^\ell_1, \ldots, p^\ell_d$ independently with uniform distribution over $[0,c_3]$ with $c_3 = 0.2$\,. 
Then, within the $\ell$-th segment of $\tau^{\star}$, 
$X_i$ follows a Dirichlet distribution with parameter 
$(p^\ell_{1}, \ldots, p^\ell_{d})$. 
Figure~\ref{fig.ex.Sc3} displays the first three coordinates of one sample generated according to this scenario.

\subsection{Parameters of KCP}\label{subsubsec.synthetic.proc}
For each sample, we apply our kernel change-point procedure 
(KCP, that is, Algorithm~\ref{algo}) with the following choices for 
its parameters. 
We always take $D_{\max} = 100$. 

%
For the first two scenarios, 
we consider three kernels: 
\begin{enumerate}
\item[(i)] The linear kernel $\klin (x,y) = xy$.  
\item[(ii)] The Hermite kernel given by $\kHer_{\sigma_H}(x,y)$ defined in Section~\ref{sec.chpt.high-dim.examples}. 
In Scenario~1, $\sigma_H=1$. In Scenario~2, $\sigma_H=0.1$. 
\item[(iii)] The Gaussian kernel $\kGau_{\sigma_G}$ 
defined in Section~\ref{sec.chpt.high-dim.examples}. In Scenario~1, $\sigma_G = 0.1$. In Scenario~2, $\sigma_G = 0.16$.
\end{enumerate}
For Scenario~3, 
we consider the $\chi^2$ kernel $\kchi_{0.1}(x,y)$ 
defined in Section~\ref{sec.chpt.high-dim.examples}, 
and the Gaussian kernel 
$\kGau_{\sigma_G}$ with $\sigma_G=1$.

In each scenario several candidate values have been explored for the bandwidth parameters of the above kernels. We have selected the ones with the most representative results.

\medbreak

%
For choosing the constants $c_1,c_2$ arising from Step~2 of KCP, 
we use the ``slope heuristics'' method, 
and more precisely a variant proposed by 
\citet[Section~4.3.2]{Leb:2002} 
for the calibration of two constants for change-point detection. 
We first perform a linear regression of 
$\Remp{\tauh(D)}$ against $ 1/n \cdot \log\binom{n-1}{D-1}$ and $D/n$ for 
$ D \in [0.6 \times D_{\max} , D_{\max}]$. 
Then, denoting by $\sh_1,\sh_2$ the coefficients obtained, 
we define $c_i = -\alpha \sh_i$ for $i=1,2$, 
with $\alpha=2$. 
The slope heuristics has been justified theoretically in various 
settings \citep[for instance by][for regressograms]{ArMa09}, 
and is supported by numerous experiments \citep{2012_BauMauMich}, 
including for change-point detection \citep{Leb:2002,Leb:2005}. 
A partial theoretical justification has been obtained recently for 
change-point detection \citep{Sor:2017}. 
The intuition behind the slope heuristics is that 
the optimal amount of penalization needed for avoiding to overfit 
with $\tauh \in \argmin_{\tau} \{ \Remp{\tau} + \pen(\tau) \}$ 
is (approximately) proportional to the minimal penalty: 
\[
\pen_{\mathrm{optimal}} (\tau) \approx \alpha \pen_{\mathrm{minimal}} (\tau)
\]
for some constant $\alpha >1$, equal to~$2$ in several settings. 
The linear regression step described above corresponds to estimating the 
minimal penalty: 
\[
\pen_{\mathrm{minimal}} (\tau) \approx - \sh_1  \cdot \frac{1}{n}\log \binom{n-1}{D_{\tau}-1} - \sh_2 \frac{D_{\tau}}{n}
\, \cdot
\]
Then, multiplying it by $\alpha$ leads to an estimation of the optimal penalty. 
In our experiments, we considered several values of $\alpha \in [0.8, 2.5 ]$. 
Remarkably, the performance of the procedure is not too sensitive to the value of $\alpha$ provided $\alpha \in [ 1.7 , 2.2 ]$. 
We only report the results for $\alpha=2$ because 
it corresponds to the classical advice 
when using the slope heuristics, and it is among the best choices for $\alpha$ according to our experiments.

\subsection{Results}
\label{sec.synthetic.data.results}
We now summarize the results of our experiments. 
\paragraph{Distance between segmentations.} 
In order to assess the quality of the segmentation $\tauh$ 
as an estimator of the true segmentation $\taus$, 
we consider two measures of distance between segmentations. 
For any $\tau,\tau' \in \cT_n$, 
we define the Hausdorff distance between $\tau$ and $\tau'$ by 
\[
d_H(\tau,\tau') 
\egaldef 
\max\setj{ 
\max_{1 \leq i \leq D_{\tau}-1}
\min_{1 \leq j \leq D_{\tau'}-1}
\absj{ \tau_i - \tau'_j }
,
\max_{1 \leq j \leq D_{\tau'}-1}
\min_{1 \leq i \leq D_{\tau}-1}
\absj{ \tau_i - \tau'_j }
}
\]
and the 
Frobenius distance between $\tau$ and $\tau'$ \citep[see][]{Laj_Arl_Bac:2014:icml} by 
\begin{gather*}
d_F(\tau,\tau') \egaldef 
\norm{M^{\tau} - M^{\tau'}}_F 
= \sqrt{ \sum_{1 \leq i,j \leq n} \parens{ M^{\tau}_{i,j} - M^{\tau'}_{i,j}}^2 },
\\
\text{where} \qquad 
M^{\tau}_{i,j} = \frac{\un_{\acc{i \text{ and } j \text{ belong 
to the same segment of } \tau }} }{\card(\text{segment of } \tau \text{ containing } i \text{ and } j)}
\enspace . 
\end{gather*}
Note that $M^{\tau} = \Pi_{\tau}$ the projection matrix onto $F_{\tau}$ when $\cH=\R$, that is, for the linear kernel on $\X=\R$.
The Hausdorff distance is probably more classical in the 
change-point literature, 
but Figure~\ref{fig.Sc1.dist}  
shows that the Frobenius distance is more informative 
for comparing $(\tauh(D))_{D > \Ds}$. 
Indeed, when $D$ is already a bit larger than $\Ds$, 
adding false change-points makes the segmentation worse 
without increasing much $d_H$; 
on the contrary, $d_F^2$ readily takes into account these additional false change-points. 

\medbreak 

\paragraph{Illustration of KCP.}
%
%
\begin{figure}
  \centering
  \subfloat[Average distance ($d_F$ or $d_H$) between $\tauh(D)$ and $\taus$, as a function of $D$.]{\label{fig.Sc1.dist} 
  \includegraphics[width = .478\textwidth]{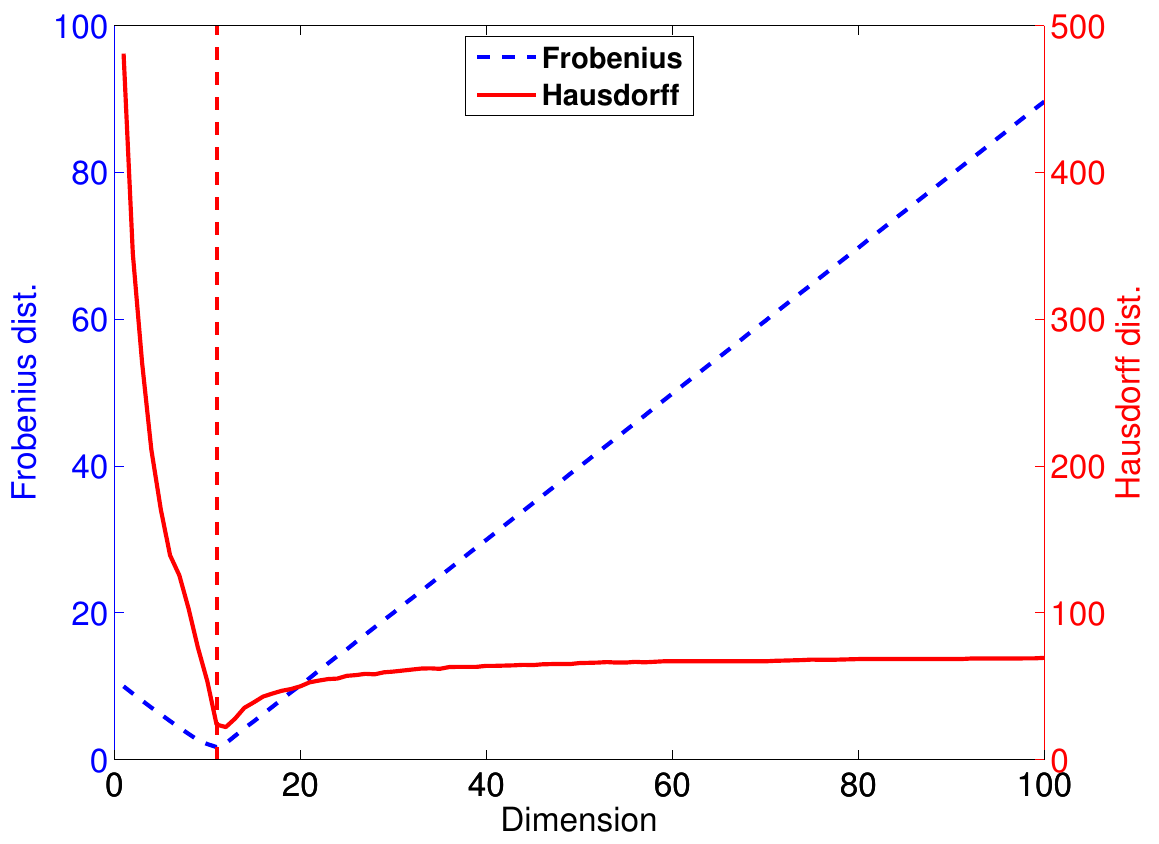}
}
 \hspace*{0.01\textwidth}
  \subfloat[Average risk $\Risk{\ERM_{\tauh(D)}}$, 
  empirical risk $\Remp{\ERM_{\tauh(D)}}$ and penalized criterion as a function of $D$.]{\label{fig.Sc1.risk}
  \includegraphics[width = .45\textwidth]{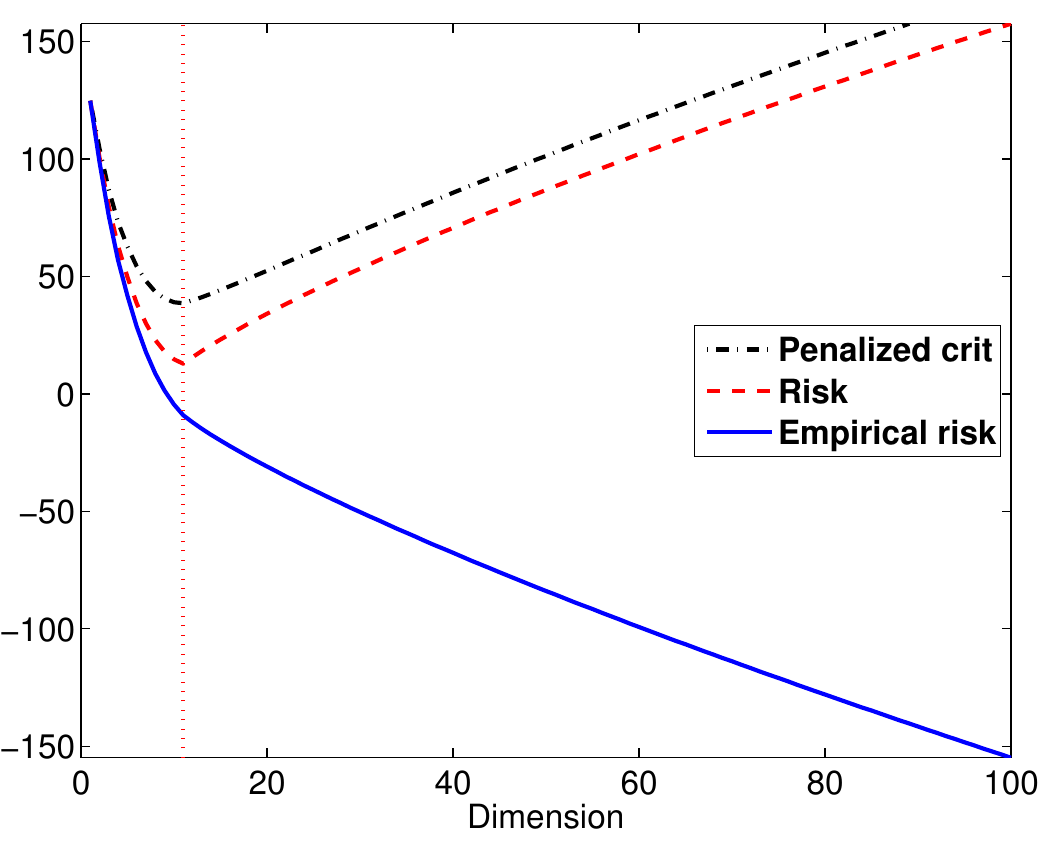}
}
\newline
  \subfloat[Distribution of $\Dh$.]{\label{fig.Sc1.Dh}
  \includegraphics[width = .45\textwidth]{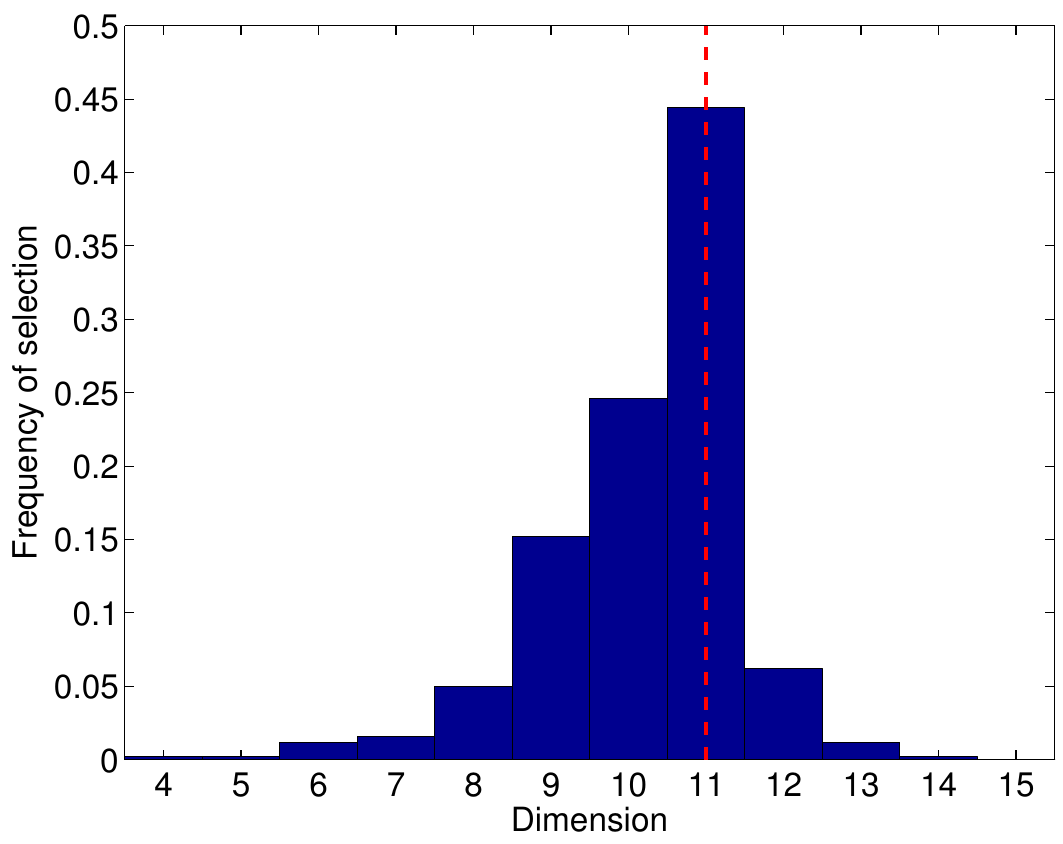}
}
 \hspace*{0.01\textwidth}
  \subfloat[Probability, for each instant $i \in \sets{1, \ldots, n}$, that $\tauh = \tauh(\Dh)$ puts a change-point at $i$.]{\label{fig.Sc1.freq} 
  \includegraphics[width = .45\textwidth]{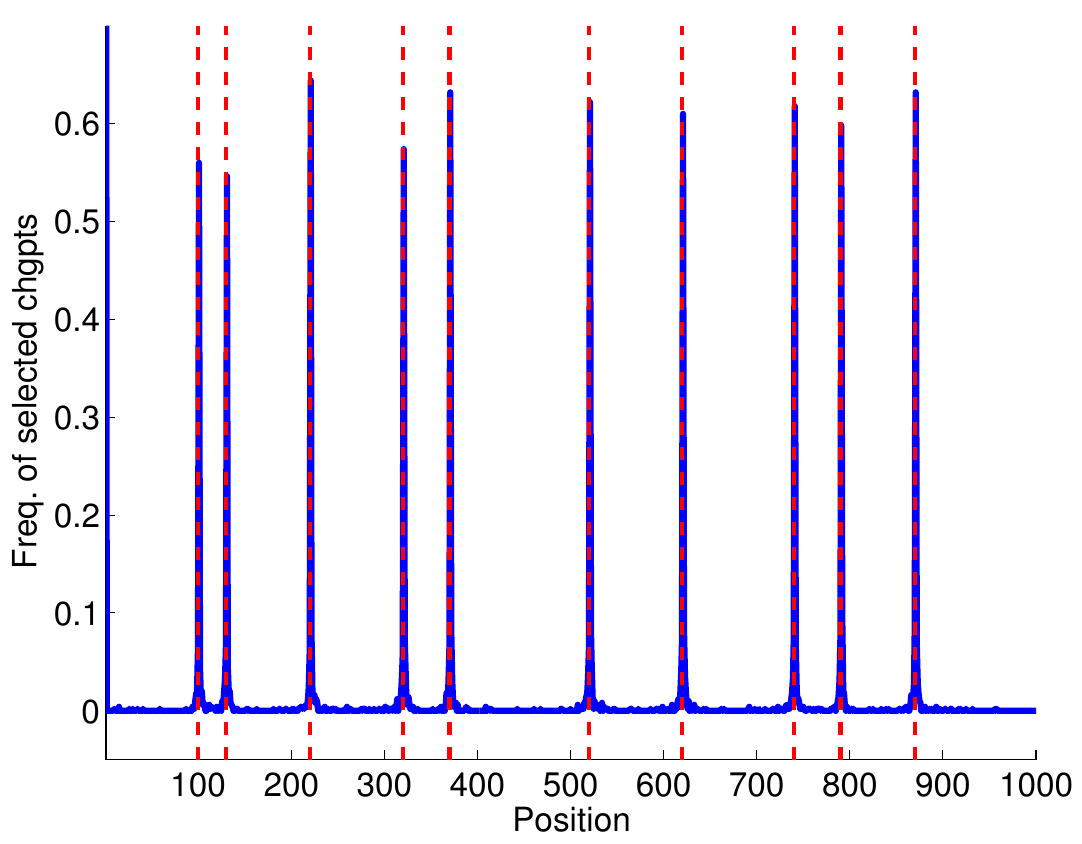}
}
  \caption{
Scenario~1: $\X=\R$, variable (mean, variance). 
Performance of KCP with kernel $\kGau_{0.1}$. 
The value $\Ds$ and the localization of the true change-points in $\taus$ are materialized by vertical red lines. 
   }
  \label{fig.Sc1} 
\end{figure}
Figure~\ref{fig.Sc1} illustrates the typical behaviour of 
KCP when $k$ is well-suited to the 
change-point problem we consider. It summarizes results
obtained in Scenario~1 with $k=\kGau$ the Gaussian 
kernel. 

%
Figure~\ref{fig.Sc1.dist} shows the expected distance 
between the true segmentation $\taus$ and the segmentations 
$(\tauh(D))_{1 \leq D \leq D_{\max}}$ 
produced at Step~1 of KCP. 
As expected, the distance is clearly minimal at $D=\Ds$, 
for both Hausdorff and Frobenius distances. Note that for each individual sample, 
$d(\tauh(D),\taus)$ behaves exactly as the expectation 
shown on Figure~\ref{fig.Sc1.dist}, up to minor fluctuations. 
Moreover, the minimal value of the distance is small 
enough to suggest that $\tauh(\Ds)$ is indeed close to $\taus$. 
For instance, 
$\E\crochs{ d_F \parens{ \tauh(\Ds),\taus } } \approx 1.71$, 
with a $95\%$ error bar smaller than $0.11$. 
The closeness between $\tauh(\Ds)$ and $\taus$ when $k=\kGau$ 
can also be visualized on Figure~\ref{fig.Sc1.freq-Ds.kGau} in the supplementary material. 

As a comparison, when $k=\klin$ in the same setting, 
$\tauh(\Ds)$ is much further from $\taus$ since 
$\E\crochs{ d_F \parens{ \tauh(\Ds),\taus } } \approx 10.39 \pm 0.24$, 
and a permutation test shows that the difference is significant, with a p-value smaller than $10^{-13}$.
See also Figures~\ref{fig.Sc1.klin.dist} and~\ref{fig.Sc1.freq-Ds.klin} 
in the supplementary material. 

%
Step~2 of KCP is illustrated by 
Figures~\ref{fig.Sc1.risk} and~\ref{fig.Sc1.Dh}. 
The expectation of the penalized criterion is minimal 
at $D=\Ds$ (as well as for the risk of $\ERM_{\tauh(D)}$), 
and takes significantly larger values when $D\neq \Ds$ 
(Figure~\ref{fig.Sc1.risk}). 
As a result, KCP often selects a number 
of change-points $\Dh-1$ close to its true value 
$\Ds-1$ (Figure~\ref{fig.Sc1.Dh}). 
Overall, this suggests that the model selection procedure 
used at Step~2 of KCP works fairly well. 

%
The overall performance of KCP as a change-point detection 
procedure is illustrated by Figure~\ref{fig.Sc1.freq}. 
Each true change-point has a probability larger than 
$0.5$ to be recovered \emph{exactly} by $\tauh$. 
If one groups the positions $i$ by blocks of six elements 
$\sets{6 j , 6j+1, \ldots, 6j+5}$, $j \geq 1$, 
the frequency of detection 
of a change-point by $\tauh$ 
in each block containing a true change-point 
is between $79$ and $89\%$.
%
%
Importantly, such figures are obtained without overestimating 
much the number of change-points, according to Figure~\ref{fig.Sc1.Dh}. 
Figures~\ref{fig.Sc1.klin.freq} and~\ref{fig.Sc1.kHer.freq} in the supplementary material 
show that more standard change-point detection algorithms ---that is, KCP with $k=\klin$ or $\kHer$--- have a slightly worse performance.

\medbreak 

\paragraph{Comparison of three kernels in Scenario~2.}
Scenario~2 proposes a more challenging change-point problem 
with real-valued data: 
the distribution of the $X_i$ changes while the mean 
\emph{and} the variance remain constant. 
The performance of KCP with three 
kernels ---$\klin$, $\kHer$ and $\kGau$--- is shown on 
Figure~\ref{fig.Sc2}. 
%
%
\begin{figure}
  \centering
\figtroishspace
%
  \subfloat[$k=\klin$]{\label{fig.Sc2.klin-freqDs} 
\includegraphics[width = \figtroiswidth]{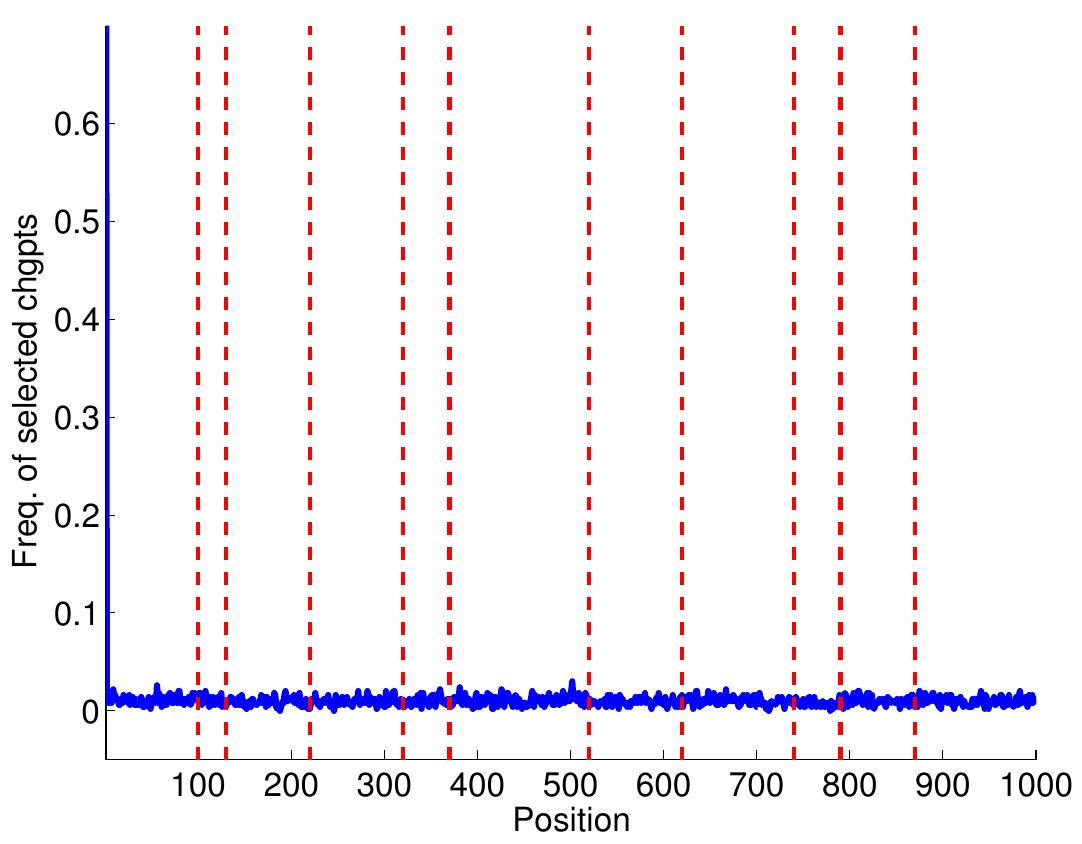}
}
 \hspace*{0.01\textwidth}
  \subfloat[$k=\kHer_{0.1}$]{\label{fig.Sc2.kHer-freqDs}
\includegraphics[width = \figtroiswidth]{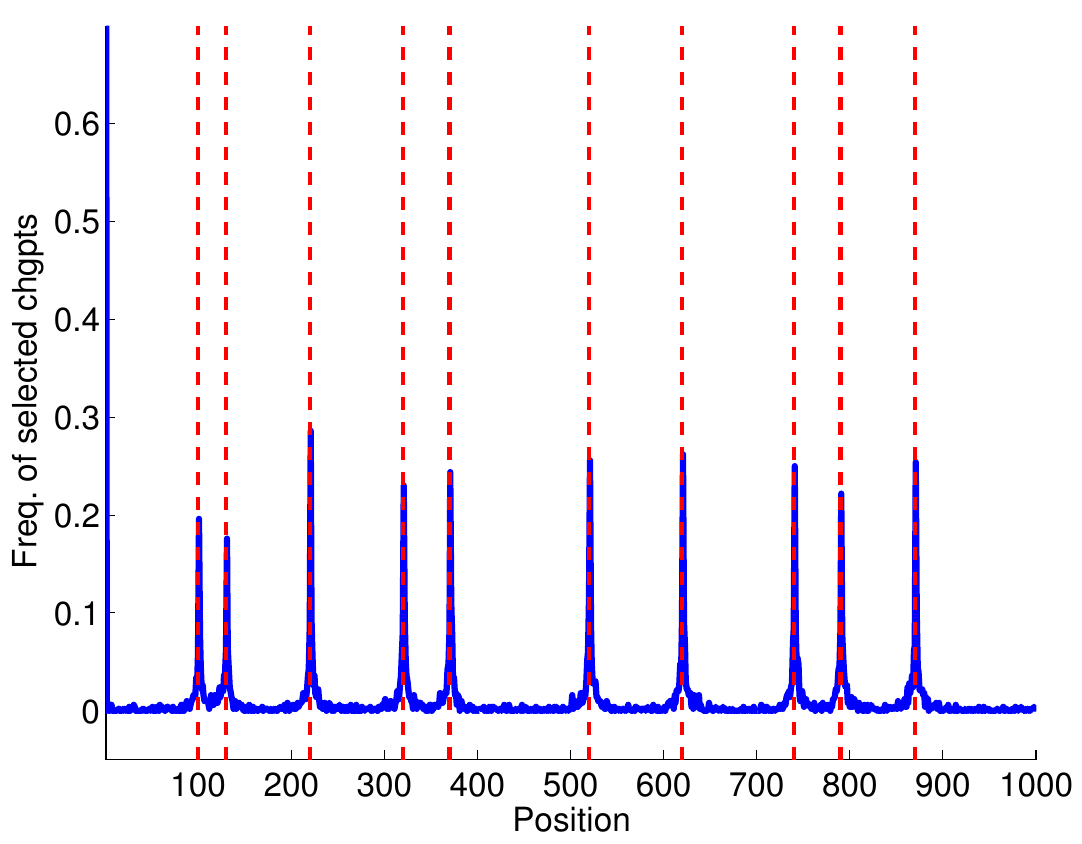}
}
 \hspace*{0.01\textwidth}
  \subfloat[$k=\kGau_{0.16}$]{\label{fig.Sc2.kGau-freqDs}
\includegraphics[width = \figtroiswidth]{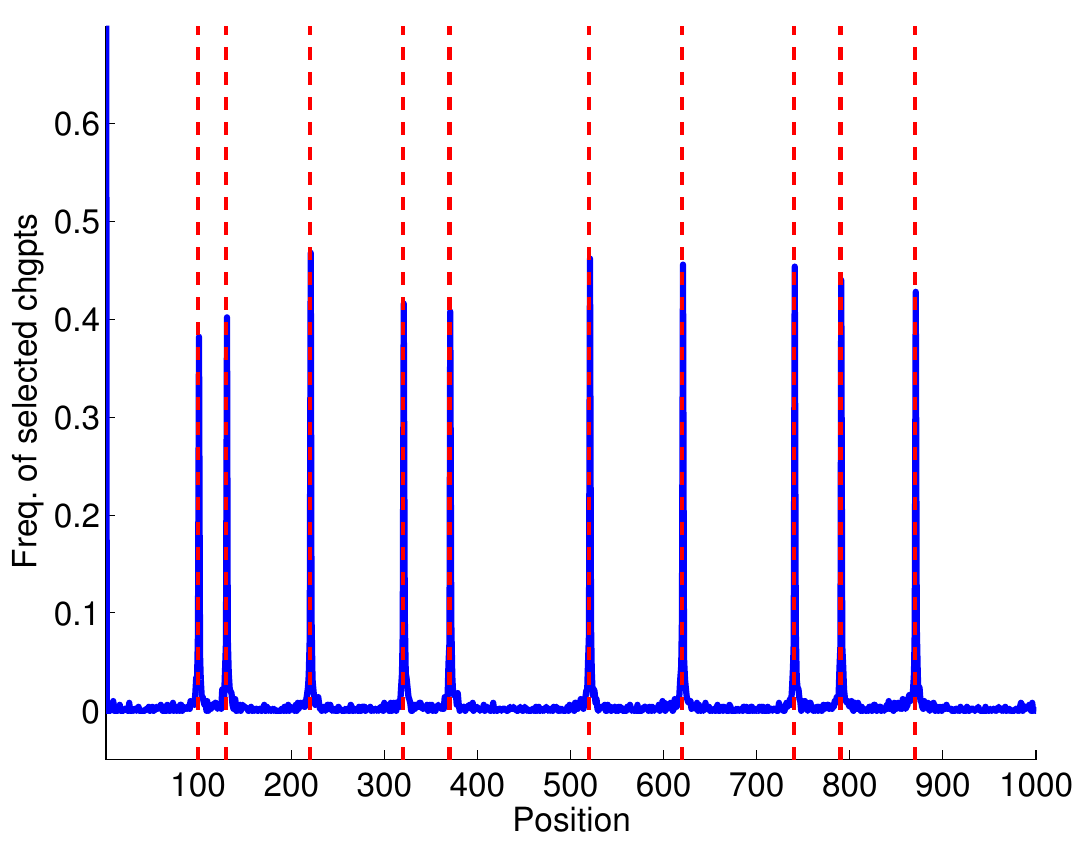}
}
\newline 
%
%
%
\figtroishspace
  \subfloat[$k=\klin$]{\label{fig.Sc2.klin-dist} 
\includegraphics[width=\figtroiswidth]{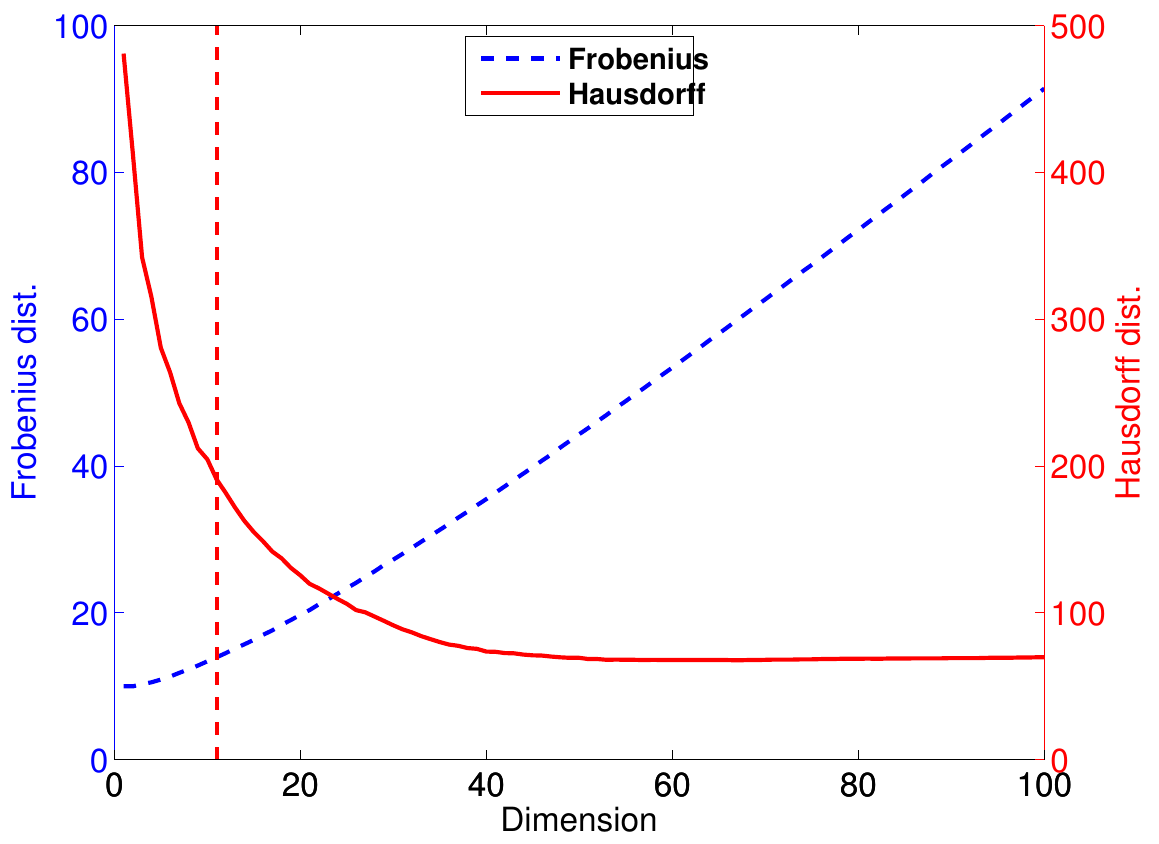}
}
 \hspace*{0.01\textwidth}
  \subfloat[$k=\kHer_{0.1}$]{\label{fig.Sc2.kHer-dist}
\includegraphics[width = \figtroiswidth]{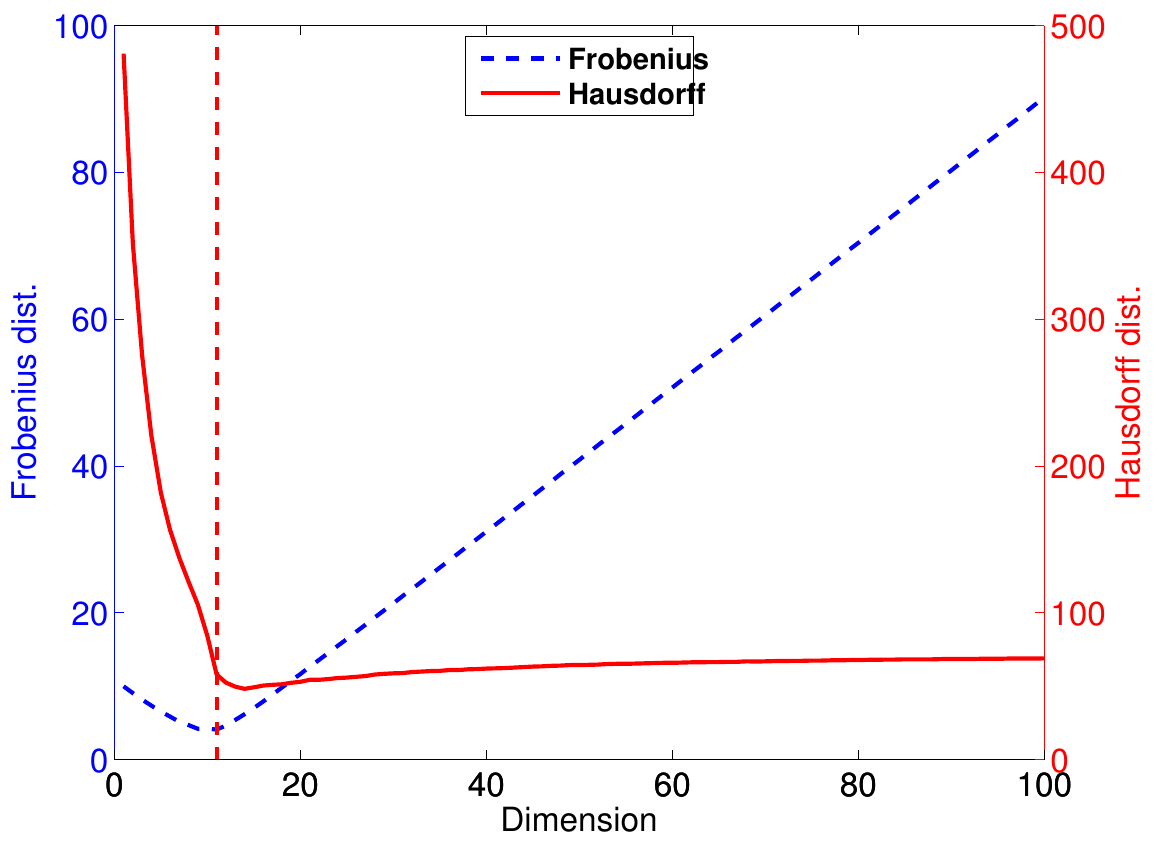}
}
 \hspace*{0.01\textwidth}
  \subfloat[$k=\kGau_{0.16}$]{\label{fig.Sc2.kGau-dist}
\includegraphics[width = \figtroiswidth]{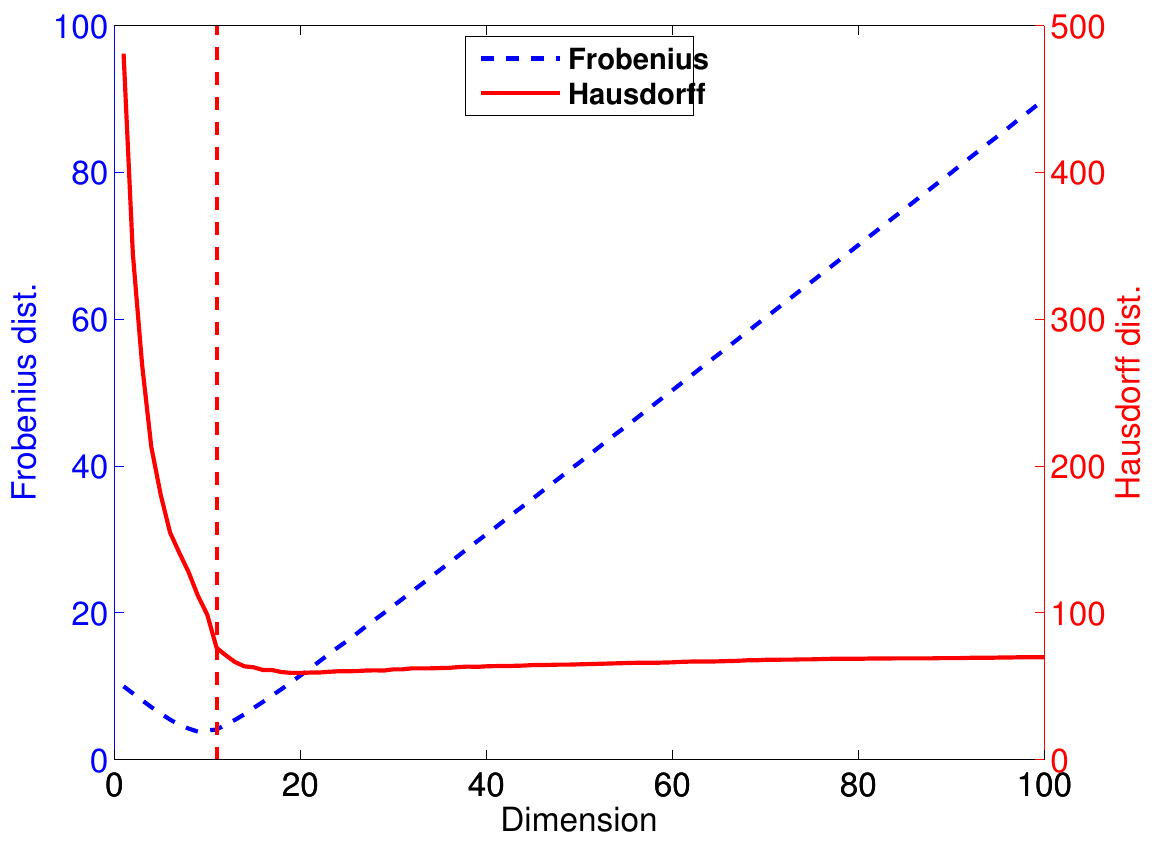}
}
\caption{%
Scenario 2: $\X=\R$, constant mean and variance. 
Performance of KCP 
with three different kernels $k$.  
The value $\Ds$ and the localization of the true change-points in $\taus$ are materialized by vertical red lines. 
\Top 
Probability, for each instant $i \in \sets{1, \ldots, n}$, that $\tauh(\Ds)$ puts a change-point at $i$.
\Bottom 
Average distance ($d_F$ or $d_H$) between $\tauh(D)$ and $\taus$, as a function of $D$.
}
\label{fig.Sc2}
\end{figure}
The linear kernel $\klin$ corresponds to the classical 
least-squares change-point algorithm \citep{Leb:2005}, 
which is designed to detect changes in the mean, 
hence it should fail in Scenario~2. 
KCP with the Hermite kernel $\kHer$ 
is a natural ``hand-made'' extension of this classical 
approach, since it corresponds to applying the 
least-squares change-point algorithm to the 
feature vectors $\parens{ H_{j,h}(X_i) }_{1 \leq j \leq 5}$. 
By construction, it should be able to detect changes in the first five moments on the $X_i$. 
On the contrary, taking $k=\kGau$ the Gaussian kernel 
fully relies on the versatility of KCP, 
which makes possible to consider (virtually) 
infinite-dimensional feature vectors $\kGau(X_i,\cdot)$. 
Since $\kGau$ is characteristic, it should be able to detect 
any change in the distribution of the $X_i$. 

%
In order to compare these three kernels within KCP, 
let us first assume that the number of change-points is known, 
hence we can estimate $\taus$ with $\tauh(\Ds)$, where $D^*$ is the true number of segments. 
Then, Figures~\ref{fig.Sc2.klin-freqDs}, \ref{fig.Sc2.kHer-freqDs} 
and~\ref{fig.Sc2.kGau-freqDs} show that $\klin$, $\kHer$ and $\kGau$ 
behave as expected: 
$\klin$ seems to put the change-points of $\tauh(\Ds)$ 
uniformly at random over $\sets{1, \ldots, n}$, 
while $\kHer$ and $\kGau$ are able to localize the true change-points 
with a rather large probability of success. 
The Gaussian kernel here shows a significantly better detection power, 
compared to $\kHer$: 
the frequency of exact detection of the true change-points is between 
$38$ and~$47\%$ with $\kGau$, 
and between $17$ and~$29\%$ with~$\kHer$. 
The same holds when considering blocks of size~$6$: 
$\kGau$ then detects the change-points with probability $70$ to~$79\%$, 
while $\kHer$ exhibits probabilities between $58$ and~$62\%$.

%
Figures~\ref{fig.Sc2.klin-dist}, \ref{fig.Sc2.kHer-dist} and~\ref{fig.Sc2.kGau-dist} 
show that a similar comparison between $\klin$, $\kHer$, and $\kGau$ holds 
over the whole set of segmentations $\parens{ \tauh(D) }_{1 \leq D \leq D_{\max}}$ 
provided by Step~1 of KCP. 
%
%
With the linear kernel (Figure~\ref{fig.Sc2.klin-dist}), 
the Frobenius distance between $\tauh(D)$ and $\taus$ is almost minimal for $D=1$, 
which suggests that $\tauh(D)$ is not far from random guessing for all~$D$. 
The shape of the Hausdorff distance ---first decreasing fastly, then almost constant--- also supports this interpretation: 
A small number of purely random guesses do lead to a fast decrease of $d_H$; 
and for large dimensions, adding a new random guess does not move away 
$\tauh(D)$ from $\taus$ if $\tauh(D)$ already contains all the worst possible 
candidate change-points (which are the furthest from the true change-points). 
%
%
The Hermite kernel does much better according to Figure~\ref{fig.Sc2.kHer-dist}: 
the Frobenius distance from $\tauh(D)$ to $\taus$ is minimal for $D$ close to $\Ds$, 
and the minimal expected distance, 
$\inf_D \E\crochs{ d_F \parens{ \tauh(D) , \taus} } \approx 4.12 \pm 0.6$ 
(with confidence $95\%$), 
is much smaller than when $k=\klin$ (in which case 
$\inf_D \E\crochs{ d_F \parens{ \tauh(D) , \taus} } \approx 10 $); 
this difference is significant (a permutation test yields a p-value 
smaller than $10^{-15}$). 
%
%
%
Nevertheless, we still obtain slightly better performance for 
$\parens{\tauh(D)}_{1 \leq D \leq D_{\max}}$ with $k=\kGau$, 
for which the minimal distance to $\taus$ is achieved at $D=9$, 
with a minimal expected value equal to $3.83 \pm 0.49$ 
(the difference between $\kHer$ and $\kGau$ is not statistically significant).
The Hausdorff distance suggests that both $\kHer$ and $\kGau$ lead to include false change-points among true ones as long as $D\leq \Ds$. 
However, the smaller Frobenius distance achieved by $\kGau$ at $D=9$ (rather than $D=11$ for $\kHer$) indicates 
that the corresponding change-points are closer to the true ones than those provided by $\kHer$ 
(which include more false positives).

%
When $D=\Dh$ is chosen by KCP, $\kGau$ clearly leads to the best performance in terms of recovering 
the exact change-points compared to $\klin$ and $\kHer$, 
as illustrated by Figures~\ref{fig.Sc2.klin-freqDh}, \ref{fig.Sc2.kHer-freqDh} 
and~\ref{fig.Sc2.kGau-freqDh} 
in the supplementary material. 

\medbreak 

%
Overall, the best performance for KCP in Scenario~2 is clearly obtained with 
$\kGau$, while $\klin$ completely fails 
and $\kHer$ yields a decent but suboptimal procedure. 

We can notice that other settings can lead to different behaviours. 
For instance, in Scenario~1, 
according to Figure~\ref{fig.Sc1.klin.freq} in the supplementary material, 
$\klin$ can detect fairly well the true change-points ---as expected since 
the mean (almost) always changes in this scenario, see Table~\ref{tab.Sc1.moy-var} in the supplementary material---, 
but this is at the price of a strong overestimation of the number of 
change-points (Figure~\ref{fig.Sc1.klin-Dh}). 
In the same setting, $\kHer$ provides fairly good results (Figure~\ref{fig.Sc1.kHer.freq}), 
while $\kGau$ remains the best choice (Figure~\ref{fig.Sc1.freq}). 

Since $\kGau$ is a characteristic kernel, 
these results suggest that KCP with a characteristic kernel $k$ 
might be more versatile than classical least-squares change-point 
algorithms and their extensions. 
A more detailed simulation experiment would nevertheless be needed to 
confirm this hypothesis. 
We also refer to Section~\ref{sec.conclusion.choix-k} for a discussion 
on the choice of $k$ for a given change-point problem. 

\medbreak 

\paragraph{Structured data.}
Figure~\ref{fig.Sc3} illustrates the performance 
of KCP on some histogram-valued data 
(Scenario~3). 
%
%
\begin{figure}
  \centering
  \subfloat[$k=\kchi_{0.1}$]{\label{fig.Sc3.chi2-Ds} 
\includegraphics[width = .45\textwidth]{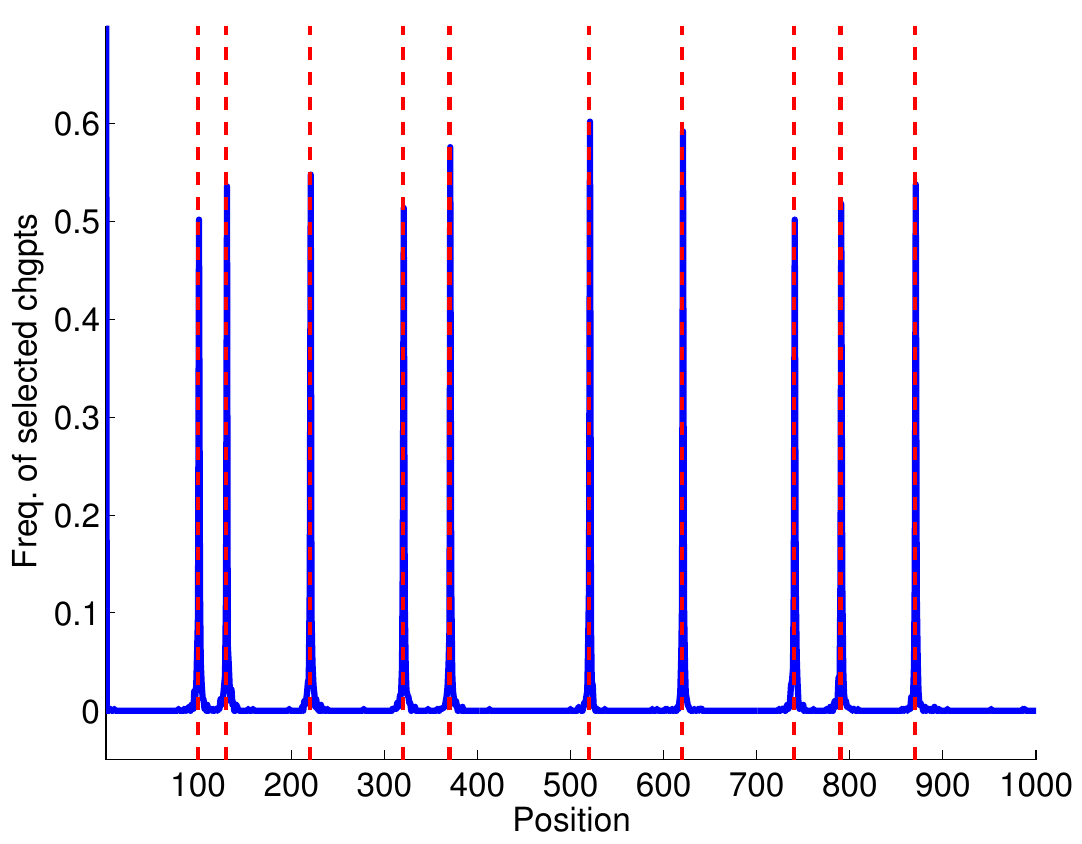}
}
 \hspace*{0.01\textwidth}
  \subfloat[$k=\kGau_{1}$]{\label{fig.Sc3.Gau-Ds}
\includegraphics[width = .45\textwidth]{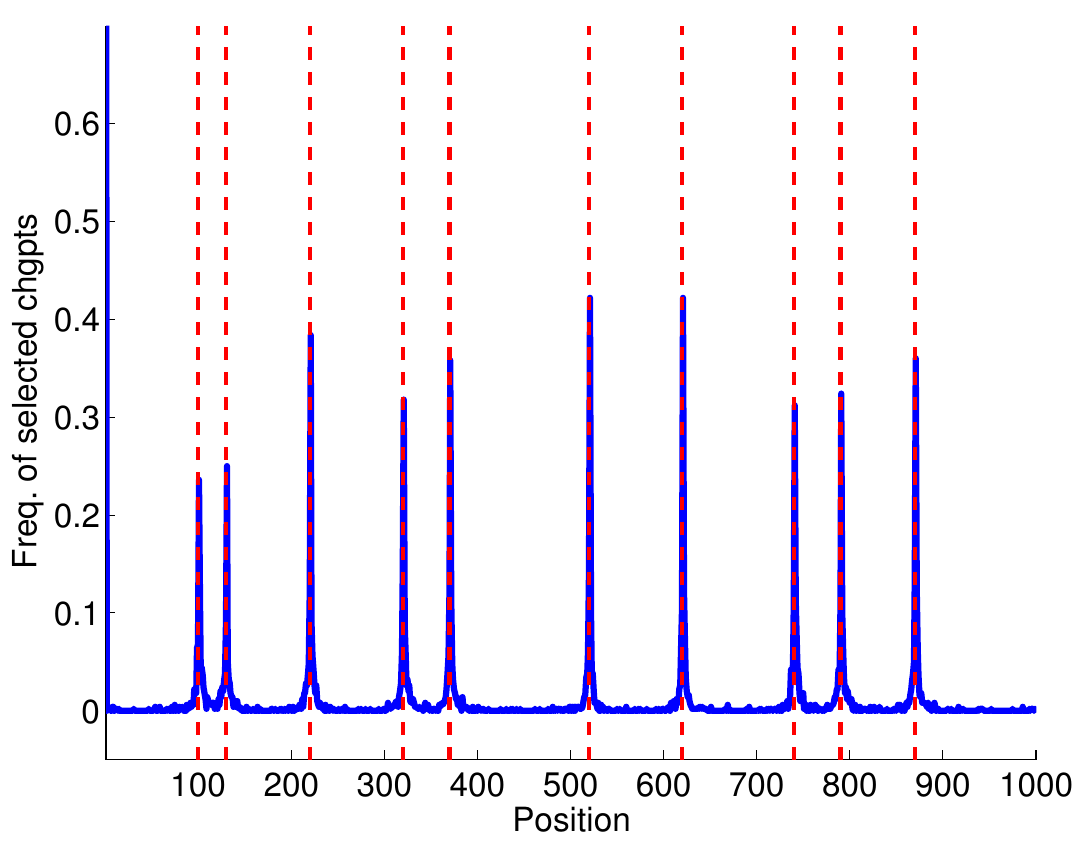}
}
  \caption{
Scenario~3: histogram-valued data. 
Performance of KCP with two different kernels $k$.   
Probability, for each instant $i \in \sets{1, \ldots, n}$, that $\tauh(\Ds)$ puts a change-point at $i$.
Vertical red lines show the true change-points locations. 
   }
  \label{fig.Sc3} 
\end{figure}
Since a $d$-dimensional histogram is also an element of $\R^d$, 
we can analyze such data either with a kernel taking 
into account the histogram structure (such as $\kchi$) 
or with a usual kernel on $\R^d$ (such as $\klin$ or $\kGau$; here, we consider $\kGau$, which seems more reliable according to our experiments in Scenarios~1 and~2). 
Assuming that the number of change-points is known, 
taking $k=\kchi$ yields quite good results according 
to Figure~\ref{fig.Sc3.chi2-Ds}, 
at least in comparison with $k=\kGau$ (Figure~\ref{fig.Sc3.Gau-Ds}). 
Similar results hold with a fully data-driven number 
of change-points, as shown by 
Figures~\ref{fig.Sc3.chi2-freqDh} and~\ref{fig.Sc3.Gau-freqDh} 
in the supplementary material. 
Hence, choosing a kernel such as $\kchi$, 
which takes into account the histogram structure of the $X_i$, 
can improve much the change-point detection performance, 
compared to taking a kernel such as $\kGau$, which ignores the structure of the $X_i$. 

Let us emphasize that Scenario~3 is quite challenging 
---changes are hard to distinguish on Figure~\ref{fig.ex.Sc3}---, 
which has been chosen on purpose.  
Preliminary experiments have been done with larger values of 
$c_3$ ---which makes the change-point problem easier, see Section~\ref{subsubsec.synthetic.framework}---, 
leading to an almost perfect localization of all 
change-points by KCP with $k=\kchi_{0.1}$.

\paragraph{Comparison to AIC/BIC-type penalty.} 
%
%
Figures \ref{fig.LinearPenalty.Dh} and~\ref{fig.LinearPenalty.freq} 
in the supplementary material 
show the results of KCP with a linear penalty 
---that is, of the form $C D / n$, $C>0$--- in step~2, 
similarly to AIC (which would correspond to $C=\sigma^2$) and BIC (for which $C=\log(n) \sigma^2 / 2$). 
Since $\sigma^2$ is unknown, we use the slope heuristics for choosing $C$ from data, 
as explained in Section~\ref{subsubsec.synthetic.proc}. 
The performance is comparable to the one of KCP, 
except that a linear penalty leads to overfitting 
---by detecting too many change-points (including false positives)--- 
with a large probability 
in Scenarios~1 and~2 (compare Figures~\ref{fig.Sc1.penlin-Dh} and \ref{fig.Sc1.kGau-Dh} for Scenario~1, 
and Figures~\ref{fig.Sc2.penlin-Dh} and \ref{fig.Sc2.kGau-Dh} for Scenario~2). 
Therefore, a linear penalty seems less reliable than the refined 
shape proposed in the definition of KCP, 
so we do not recommend to use a linear penalty in practice.

\paragraph{Comparison to the E-divise procedure (ED).} 
%
We finally consider the E-divisive procedure (ED) 
designed by \citet{Matteson_James:2014}, 
focusing on Scenarios 1--2 since this procedure is made for $\X=\R^d$ only. 
We use the \texttt{e.divisive} function from the \texttt{R}-package \texttt{ecp} 
described by \citet{JamesMatteson2015}, 
with recommended parameters $\mathtt{sig.lvl}=0.05$ 
(significance level to test any new change-point), 
$\alpha=1$, and $R=199$ permutations.
Detailed results are shown on Figures \ref{fig.EDivisive-Dh}, 
\ref{fig.EDivisive.freq-Dh} and~\ref{fig.EDivisive.freq-Ds} 
in the supplementary material. 
%
%
In both scenarios, ED provides much more conservative results 
---that is, it strongly underestimates the number of change-points--- 
compared to KCP with $k=\kGau$. 
This drawback of ED is particularly clear in Scenario~2 (more difficult) from the comparison of Figures \ref{fig.Sc2.kGau-Dh} and~\ref{fig.Sc2.EDivisive-Dh}. 
As a result, ED's detection power is much smaller than the one 
of KCP, 
with detection frequencies 2 to 5 times lower for ED in Scenario~2 
(see Figures \ref{fig.Sc2.kGau-freqDh} and~\ref{fig.Sc2.EDivisive-freqDh}). 
The performance of ED improves when $\Ds$ is given to the algorithm, 
but KCP remains significantly better than ED in terms of detection power 
(see Figures \ref{fig.Sc2.kGau-freqDs} and~\ref{fig.Sc2.EDivisive-freqDs}, for instance).  
Overall, KCP with $k=\kGau$ 
clearly outperforms ED in Scenarios~1--2, 
which can be explained by at least two reasons: 
(i) ED uses a different similarity measures than ours; 
(ii) ED relies on a greedy strategy, in which $\tauh(D+1)$ is obtained from 
$\tauh(D)$ by adding one change-point, 
so that any mistake at the beginning of the process 
impacts the final segmentation.

\section{Real-data experiment} \label{subsec.real.experiments}

\subsection{Data description}

In this section, we illustrate the behavior of KCP on a publicly available dataset corresponding to wave heights hourly-measured between January 2005 and September 2012 at a location in Northern Atlantic \citep[][Section~4.2]{killick2012optimal}. This leads to a large sample of length $n=63\,651$.

This dataset exhibits a strong difference between the wave heights during winters (high level) and summers (low level) (Figure~\ref{fig.segmentation.Wave}). Plotting the first-difference based signal following \citet[][Figure~4]{killick2012optimal} highlights strong changes in the variance of the signal (Figure~\ref{fig.segmentation.Wave.Diff}).
Automatically detecting the change-points between such successive periods is of primary interest for analyzing the environmental conditions of offshore wind farms for instance.

\begin{figure}[h!]
	\centering
	
	\subfloat[Original input]{\label{fig.segmentation.Wave} 
		
		\includegraphics[width = \textwidth]{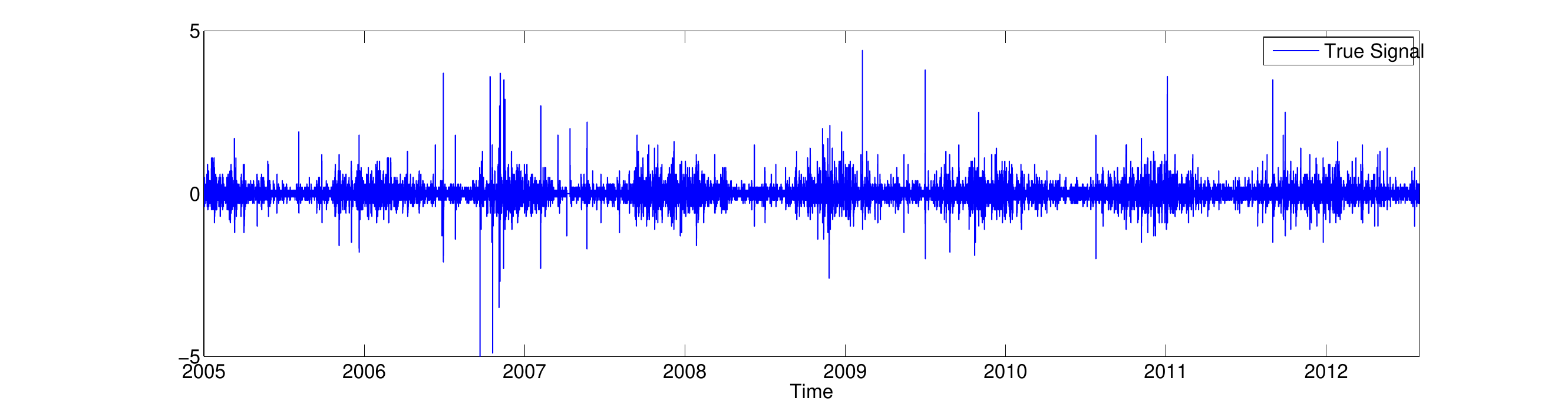}
	}
	\\
	\vspace*{-0.01\textwidth}
	\subfloat[First-difference based signal]{\label{fig.segmentation.Wave.Diff}
		\includegraphics[width =\textwidth]{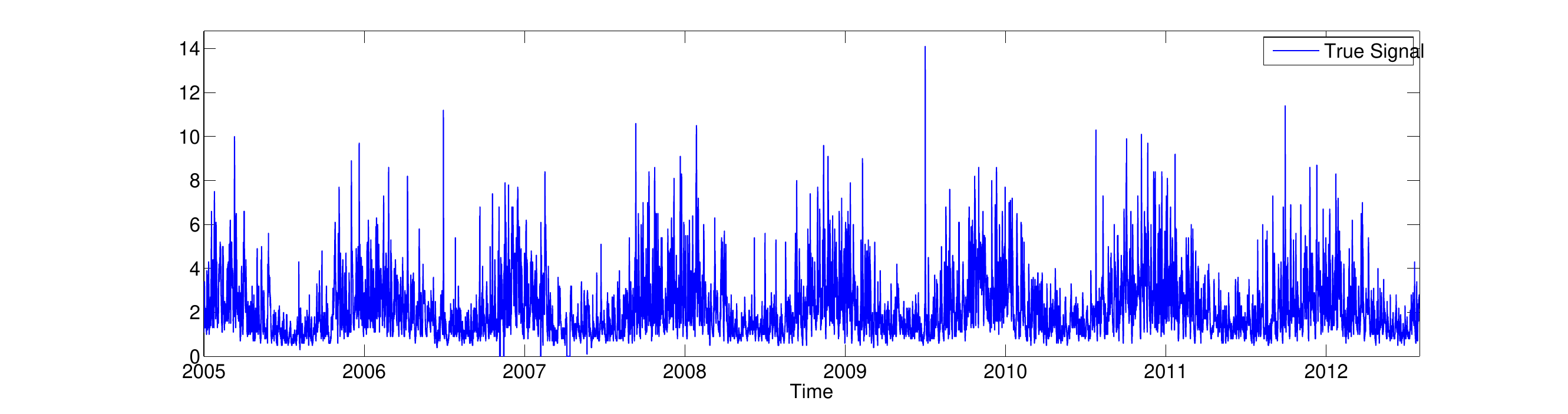}
	}
	\caption{Wave-heights time-series collected between January 2005 and September 2012.
	}
	\label{fig.segmentation.Wave.height} 
\end{figure}

\subsection{Procedures compared}

\paragraph{KCP}
We apply KCP (Algorithm~\ref{algo}) on the original data (Figure~\ref{fig.segmentation.Wave}) 
with the Gaussian kernel and bandwidth parameter equal to the empirical standard deviation of the data $\sigma_G=1.3526$ ---we here take the same data-driven bandwidth choice as \cite{Cel_Mor_Mar_Rig:2016:journal}. 
The maximum number of segments is set to $D_{\max}=50$, which seems to be large enough since about 14 changes only are expected along this period of almost 7 years.
The numerical constants $c_1,c_2\geq 0$ are estimated by using the slope heuristics as detailed in Section~\ref{subsubsec.synthetic.proc}.

\paragraph{ED}
The E-divisive procedure (ED) from \citet{Matteson_James:2014} (see Section~\ref{sec.synthetic.data.results}) is applied on the original data (Figure~\ref{fig.segmentation.Wave}) with its default parameter values: $\mathtt{sig.lvl}=0.05$, $\alpha=1$, and $R=199$ permutations.

\paragraph{PELT}
The so-called PELT procedure \citep{killick2012optimal} is considered by means of the function \texttt{cpt.var} implemented in the \texttt{R} package \texttt{changepoint} \citep{killick2014changepoint}.
It is applied to the first-difference based signal (Figure~\ref{fig.segmentation.Wave.Diff}) as done by \citet{killick2012optimal}, 
because this procedure is built for detecting changes in the variance of a zero-mean Gaussian signal.

\subsection{Results}

Figure~\ref{fig.segmented.Wave.height} displays the estimated change-points (red vertical dashed lines) output by KCP (Figure~\ref{fig.segmented.Wave.Diff.KCP}) and PELT (Figure~\ref{fig.segmented.Wave.Diff.PELT}).
The segmentation output by ED only contains one segment (no change-point), which is consistent with the trend of ED towards underestimating the number of changes.

KCP outputs 16 homogeneous segments which do not coincide with changes of the mean as one could have feared. 
PELT outputs 17 segments which are mainly similar to the ones of KCP. 
Both results are realistic, as explained by \citet{killick2012optimal}.

\begin{figure}[h!]
	\centering
	
	\subfloat[KCP]{\label{fig.segmented.Wave.Diff.KCP}
		\includegraphics[width = \textwidth]{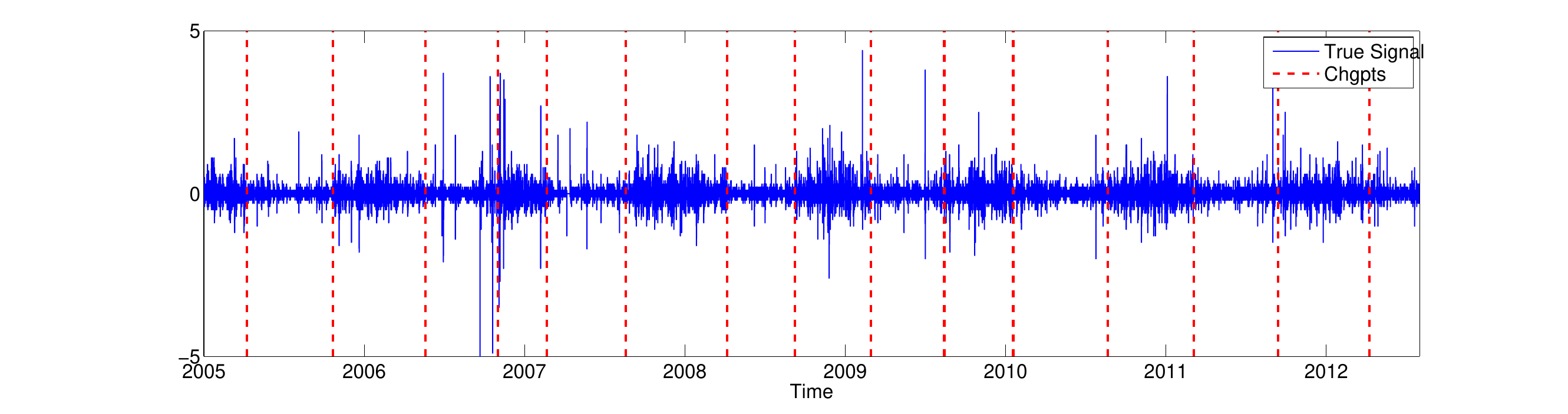}
	}
	\\
	\subfloat[PELT]{\label{fig.segmented.Wave.Diff.PELT}
		\includegraphics[width =\textwidth]{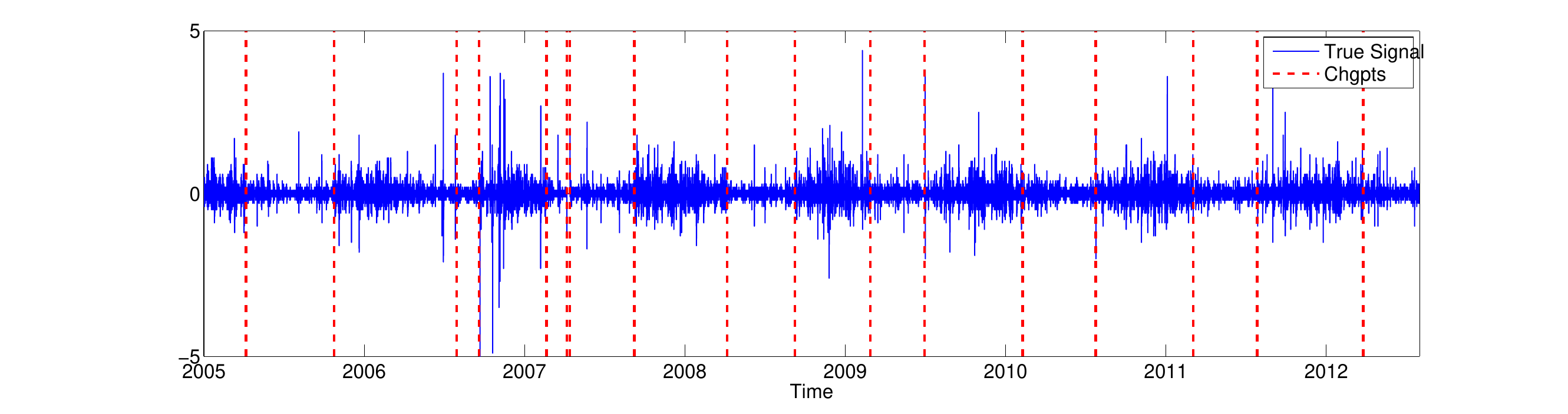}
	}
	\caption{Segmentations of the wave-heights time-series output by KCP and PELT, respectively. 
	}
	\label{fig.segmented.Wave.height} 
\end{figure}

Nevertheless there are a few differences around the year 2007 where PELT detects very narrow segments, which are likely related to outliers.
It still arises that the fourth change-point location estimated by KCP is somewhat questionable, compared to the one output by PELT, 
since it coincides with a strong change in the variance which seems, by eye, to have started a bit sooner.

A striking feature of KCP remains that no \emph{a priori} specification has been made about the type of changes we are looking for. 
This strongly contrasts with the PELT procedure,  
which makes a Gaussian assumption and relies on 
the prior knowledge that changes occur in the variance of the signal only (in this example). 

The overall conclusion is that KCP here provides reliable results without requiring 
any side information about the data distribution and the nature of its changes. 
This turns out to be a strong asset when analyzing real data, 
for which any distributional assumption is misleading when it happens to be violated.

\section{Conclusion}\label{sec.conclusion}
%
%
This paper describes a kernel change-point algorithm 
(KCP, that is, Algorithm~\ref{algo}), 
based upon a penalization procedure generalizing the one of 
\citet{Com_Roz:2004} and \citet{Leb:2005} to RKHS-valued data. 
Such an extension significantly broadens the range of 
possible applications of the algorithm, 
since it can deal with complex or structured data, 
and it can detect changes in the full distribution 
of the data ---not only the mean or the variance. 
The new theoretical tools developed in the paper ---mostly, a 
concentration inequality for some function of 
RKHS-valued random variables (Proposition~\ref{pro.conc.quad})--- could be useful in other 
settings, such as clustering in reproducing kernel Hilbert 
spaces or functional data analysis. 
Let us now end the paper with three questions about KCP: 
one that has been solved while this paper was in revision, 
and two that are still open.

\subsection{Identification of the change-point locations}
\label{sec.conclu.consistency}
A natural question for a change-point algorithm 
is its consistency for estimating the 
true change-point locations $\taus$. 
More precisely, 
let us assume that some $\taus \in \cT_n$ exists such that 
\[ 
P_{X_{\taus_{\ell-1}+1}} 
= \cdots = P_{X_{\taus_{\ell}}} 
\qquad \text{for } 1 \leq \ell \leq  D_{\taus}
\, , 
\qquad 
P_{X_{\taus_{\ell}}} \neq P_{X_{\taus_{\ell}+1}}
\qquad \text{for } 1 \leq \ell \leq  D_{\taus} -1 
\]
and $D_{\taus}$ is fixed as $n$ tends to infinity (even if $\taus$ necessarily depends on $n$). 
The goal is to prove that 
$d(\tauh,\taus)$ tends to zero almost surely 
as $n$ tends to infinity, 
where $d$ is some distance on $\cT_n$, 
for instance $n^{-1} d_F$ or $n^{-1} d_H$ as defined in 
Section~\ref{sec.synthetic.data.results}. 
Many papers prove such a consistency result 
for other change-point algorithms in various settings 
\citep[for instance,][]{Yao:1988,Lavielle:Moulines:2000,Frick_Munk_Sieling:2014,Matteson_James:2014}. 
Answering this question for KCP is 
beyond the scope of the paper. 
It has been proved by \citet{Gar_Arl:2015}, 
after the first version of this work appeared as a preprint, 
that KCP is indeed consistent under mild assumptions.

\subsection{Choosing the kernel $k$}
\label{sec.conclusion.choix-k}
A major practical and theoretical question about KCP 
is the choice of the kernel $k$. 
Fully answering this question is beyond the scope of 
the paper, but we can already provide a few guidelines, 
based upon the theoretical and experimental results 
that we already have, and review some previous works 
tackling a related question. 

\medbreak

%
First, simulation experiments in Section~\ref{sec.synthetic.data} 
show that the performance can strongly vary with $k$. 
They suggest that using a characteristic kernel ---such as 
the Gaussian kernel $\kGau$--- yields a more versatile 
procedure when the goal is to detect changes in the 
full distribution of the data. 
Nevertheless, for a given change-point problem, 
all characteristic kernels certainly are not equivalent. 
For instance, unshown experimental results suggest 
that $\kGau_{h}$ with a clearly bad choice of the bandwidth $h$ ---say, smaller than $10^{-4}$ or larger than $10^4$ in settings similar to Scenario~1--- leads 
to a poor performance of KCP, 
despite the fact that $\kGau_{h}$ is characteristic 
for any $h>0$. 

Furthermore, for a given setting, a non characteristic 
kernel can be a good choice: 
when the goal is to detect changes in the mean of $X_i \in \R^d$, 
$\klin$ is known to work very well \citep{Leb:2005}. 
\citet{Cab_etal:2018,Cab_etal:2018:AR} also show that KCP can be used for focusing on changes 
in the correlation (resp. autocorrelation) structure of multivariate time series.

\medbreak

%
Second, our theoretical interpretation of KCP 
in Section~\ref{sec.oracle.intuition} 
suggests how the performance of KCP 
depends on $k$, hence on which basis $k$ should be chosen. 
Indeed, KCP focuses on changes in the 
mean $\bayes_1, \ldots, \bayes_n$ of the time series 
$Y_1, \ldots, Y_n \in \cH$. 
A change between $P_{X_i}$ and $P_{X_{i+1}}$ should be 
detected more easily when 
\begin{align*}
  \normHs{\bayes_{i+1} - \bayes_i}^2 
 = \E\crochb{k(X_{i+1},X_{i+1})} - 2 \E\crochb{k(X_{i+1},X_{i})}  + \E\crochb{k(X_{i},X_{i})}
\end{align*}
is larger, compared to the ``noise level'' 
$\max\{v_i,v_{i+1}\}$. 
When $P_{X_i} \neq P_{X_{i+1}}$, we know that 
$\normHs{\bayes_{i+1} - \bayes_i}$ is positive 
for any characteristic 
kernel $k$, while it might be equal to zero when $k$ is not characteristic. 
But the fact that $k$ is characteristic or not 
is not sufficient to guess whether $k$ will work well 
or not, according to the above heuristic. 

\medbreak

%
The problem of choosing a kernel has been considered 
for many different tasks in the machine learning literature. 
Let us only mention here some references that are tackling 
this question in a framework close to change-point detection: 
choosing the best kernel for a two-sample or an homogeneity test. 

%
For choosing the bandwidth $h$ of a Gaussian kernel, 
a classical heuristic ---called the median heuristic--- is to take $h$ equal to some 
median of $\parens{ \norms{X_i-X_j} }_{i < j}$, 
see  
\citet[Section~8, and references therein]{Gre_etal:2012} 
and \citet{Gar_Jit_Kan:2018}. 
%

%
A procedure for choosing the best convex combination 
of a finite number of kernels has been proposed 
by \citet{Gre_etal:2012:nips}, 
with the goal of building a powerful two-sample test. 
Another idea for combining several kernels, 
for instance the family $\sets{ \kGau_h : h>0}$, 
has been studied by \citet{Sri_etal:2009} 
for homogeneity and independence tests. 
Roughly, the idea is to replace the MMD test statistics ---which depends on a kernel $k$--- by its supremum 
over the considered family of kernels. 
Nevertheless, the extension of these two ideas 
to change-point detection with KCP 
does not seem straightforward. 

\medbreak


Let us now discuss the choice of the bandwidth of a Gaussian kernel for KCP. 
If $n$ is large, the median heuristic can require a large computation time. 
When $X_i \in \R$, the empirical standard deviation of $\{ X_1, \ldots , X_n \}$ is a good proxy to it, 
easy to compute on large datasets. 
It has been used successfully with KCP by \citet{Cel_Mor_Mar_Rig:2016:journal}, 
which is the reason why we use it in Section~\ref{subsec.real.experiments}. 

Nevertheless, using the median heuristic (or a proxy) with KCP may be questionable in general, 
since two-sample test and multiple change-point detection are different tasks. 
For instance, when the mean of the $X_i \in \R$ has large jumps, 
the median-heuristic bandwidth can be much larger than the 
standard deviation of the $X_i$, so that it may not work as well. 
In such cases, another option to consider would be some median of 
$\parens{ \norms{X_{i+1}-X_i} }_{1 \leq i \leq n-1}$, 
which could be studied in future works on KCP.

\subsection{Heteroscedasticity of data in $\H$}
\label{sec.conclusion.heterosc}
A possible drawback of KCP 
is that it does not take into account the fact 
that the variance $v_i$ of $Y_i = \Phi(X_i)$ 
can change with $i$: 
in general, the $Y_i$ are heteroscedastic. 
In the case of real-valued data and the linear kernel $\klin$, 
\citet{Arlot:Celisse:2011} have shown that heteroscedastic 
data can make KCP fail, 
and that this failure cannot be fixed by changing the penalty used at Step~2: 
all the segmentations $\tauh(D)$ produced at Step~1 
can be wrong. 

We conjecture that, for the Gaussian kernel $\kGau_h$ 
at least, when the bandwidth $h$ is well chosen, 
the variances of the $Y_i$ stay within a reasonably small 
range of values for most non-degenerate distributions. 
Indeed, according to Eq.~\eqref{eq.v_i}, 
\[ 
v_i = 1 - \E\crochj{ \exp\parenj{ \frac{-\normHs{X_i - X'_i}^2}{2 h^2} } } \in [0,1]
\]
where $X'_i$ is an independent copy of $X_i$. 
If $X_i$ is not deterministic and if $h$ 
is smaller than the typical order of magnitude 
of $\normHs{X_i - X'_i}$, then, 
$v_i$ cannot be much smaller than its maximal value~$1$. 
The median heuristic and our simulation experiments 
suggest that ``good'' values of $h$ for change-point 
detection are small enough, 
but this remains to be proved. 

When heteroscedasticity is a problem for KCP, 
which probably occurs for some kernels beyond $\klin$, 
we can think of combining KCP 
with the ideas of \citet{Arlot:Celisse:2011}, 
that is, replacing the empirical risk and the penalized criterion 
in Steps~1 and~2 of KCP by 
cross-validation estimators of the risk 
$\Risks{\ERM_{\tau}}$.


\section*{Acknowledgments} 
The authors thank Damien Garreau for some discussions that lead to an improvement of the theoretical results 
---namely, Proposition~\ref{pro.conc.quad} and Theorem~\ref{thm.oracle.large}, 
which were stated with the additional assumption that 
$ \min_i v_i \geq c M^2 >0$ in a previous version of this paper 
\citep{Arl_Cel_Har:2012:v1}. 
\\
This work was mostly done while Sylvain Arlot was financed by CNRS 
and member of the Sierra team in the Departement d'Informatique de l'Ecole normale superieure (CNRS/ENS/INRIA UMR 8548), 
45 rue d'Ulm, F-75230 Paris Cedex 05, France, and Zaid Harchaoui was a member of the LEAR team of Inria.
Sylvain Arlot and Alain Celisse were also supported by Institut des Hautes \'Etudes Scientifiques (IHES, Le Bois-Marie, 35, route de Chartres,
91440 Bures-Sur-Yvette, France) at the end of the writing of this paper.
Sylvain Arlot is also member of the Select project-team of Inria Saclay. 
\\
The authors acknowledge the support of the French Agence Nationale de la Recherche (ANR) under reference ANR-09-JCJC-0027-01 ({\sc Detect} project) 
and ANR-14-CE23-0003-01 ({\sc Macaron} project), 
the GARGANTUA project funded by the Mastodons program of CNRS, 
the LabEx Persyval-Lab (ANR-11-LABX-0025), 
the BeFast project funded by the PEPS Fascido program of CNRS, 
and the Moore-Sloan Data Science Environment at
NYU.


\appendix

\renewcommand{\theequation}{\thesection.\arabic{equation}}
\renewcommand{\thetheorem}{\thesection.\arabic{theorem}}
\renewcommand{\thelemma}{\thesection.\arabic{lemma}}
\renewcommand{\thedefinition}{\thesection.\arabic{definition}}
\renewcommand{\theremark}{\thesection.\arabic{remark}}
\renewcommand{\thetable}{\thesection.\arabic{table}}
\renewcommand{\thefigure}{\thesection.\arabic{figure}}

\section{Additional proofs}
\label{app.additional.proofs}

\subsection{Proofs of Section~\ref{subsec.abstract.formulation.algorithm}}
\label{app.proofs.orthog.proj}

\subsubsection{Proof of Eq.~\eqref{eq.regressogram.Hilbert}}
Let $f \in F_\tau$ and $g \in \H^n$. 
For any $\ell \in \inter{1,D_{\tau}}$, we define 
$I^{\tau}_{\ell} \egaldef \inter{\tau_{\ell-1} + 1 , \tau_{\ell}}$ the $\ell$-th interval of $\tau$, 
$f_{I^{\tau}_{\ell}}$ the common value of $(f_i)_{i \in I^{\tau}_{\ell}}$ and 
\begin{align} \label{eq.proj.interval.value}
 \overline{g}_{I^{\tau}_{\ell}} \egaldef \frac{1}{\card(I^{\tau}_{\ell})} \sum_{i \in I^{\tau}_{\ell}} g_i 
= \frac{1}{\tau_{\ell} - \tau_{\ell-1}} \sum_{i \in I^{\tau}_{\ell}} g_i
\enspace . 
\end{align}
Then, 
\begin{align*}
\norm{f-g}^2 
&= \sum_{\ell=1}^{D_\tau} \sum_{i \in I^{\tau}_{\ell}} 
\croch{ \normH{f_{I^{\tau}_{\ell}} - \overline{g}_{I^{\tau}_{\ell}}}^2 + \normH{g_i - \overline{g}_{I^{\tau}_{\ell}}}^2 + 
2 \prodH{f_{I^{\tau}_{\ell}} - \overline{g}_{I^{\tau}_{\ell}}}{\overline{g}_{I^{\tau}_{\ell}} - g_i}}
\\
&= \sum_{\ell=1}^{D_\tau} \croch{ (\tau_{\ell}-\tau_{\ell-1}) \normH{f_{I^{\tau}_{\ell}} - \overline{g}_{I^{\tau}_{\ell}}}^2 }
+ \sum_{\ell=1}^{D_\tau} \sum_{i \in I^{\tau}_{\ell}} \normH{g_i - \overline{g}_{I^{\tau}_{\ell}}}^2 
\enspace.
\end{align*}
since $\sum_{i \in I^{\tau}_{\ell}} \parens{\overline{g}_{I^{\tau}_{\ell}} - g_i }=0$. 
So, $\norm{f-g}^2$ is minimal over $f \in F_{\tau}$ if and only if $f_{I^{\tau}_{\ell}} = \overline{g}_{I^{\tau}_{\ell}}$ for every $\ell \in \inter{1,D_\tau}$. 
\hfill \blackbox

\subsubsection{Proof of Eq.~\eqref{empirical.risk.loss}}

We use the notations introduced in the proof of Eq.~\eqref{eq.regressogram.Hilbert}. 
Then, 
\begin{align*}
\norm{ Y - \ERM_{\tau} }^2 
&= \sum_{\ell=1}^{D_{\tau}} \sum_{i \in I^{\tau}_{\ell}} \normHs{Y_i - \overline{Y}_{I^{\tau}_{\ell}}}^2
= \sum_{\ell=1}^{D_{\tau}} \sum_{i \in I^{\tau}_{\ell}} \parenj{ \normHs{Y_i}^2  - \normHs{\overline{Y}_{I^{\tau}_{\ell}}}^2 }
\end{align*} 
where we used Eq.~\eqref{eq.regressogram.Hilbert} for the first equality, 
and that 
\[
\sum_{i \in I^{\tau}_{\ell}} \prodH{Y_i}{\overline{Y}_{I^{\tau}_{\ell}}} 
= \card(I^{\tau}_{\ell}) \normH{\overline{Y}_{I^{\tau}_{\ell}}}^2 
\] 
for the second equality. 
Therefore, 
\begin{align*}
\norms{ Y - \ERM_{\tau} }^2 
&= \sum_{i=1}^n \normHs{Y_i}^2 
- \sum_{\ell=1}^{D_{\tau}}
\frac{1}{\tau_{\ell} - \tau_{\ell-1}} 
\normHBb{ \sum_{i \in I^{\tau}_{\ell}} Y_i}^2
\\
&= \sum_{i=1}^n \normHs{Y_i}^2 
- \sum_{\ell=1}^{D_{\tau}}
\frac{1}{\tau_{\ell} - \tau_{\ell-1}} 
\sum_{i,j \in I^{\tau}_{\ell}} \prodH{Y_i}{Y_j}
\\
&= \sum_{i=1}^n k(X_i,X_i)
- \sum_{\ell=1}^{D_{\tau}}
\frac{1}{\tau_{\ell} - \tau_{\ell-1}} 
\sum_{i,j \in I^{\tau}_{\ell}} k(X_i,X_j)
\enspace , 
\end{align*}
which proves Eq.~\eqref{empirical.risk.loss}. 
\hfill \blackbox

\subsection{Concentration of the linear term: proof of Proposition~\ref{prop.concentration.lineaire}}
\label{sec.proof.conc-linear}

Let us define $\bayes_\tau = \Pi_\tau \bayes $ and 
\begin{align*}
	S_\tau = \prodscal{\bayes-\bayes_\tau}{\varepsilon}  = \sum_{i=1}^n  Z_i 
	\quad \text{with} \quad Z_i = \prodscalb{(\bayes-\bayes_\tau)_i}{\varepsilon_i}_{\H} \enspace ,
\end{align*}
The $Z_i$s are independent and centered, so Eq.~\eqref{eq.lem.upperbounds.Bernstein.Linear}--\eqref{eq.lem.upperbounds.Bernstein.Linear.2} in Lemma~\ref{lem.upperbounds.Bernstein.Linear} below (which requires assumption \eqref{hyp.donnees-bornees}) show that the conditions of Bernstein's inequality are satisfied (see Proposition~\ref{thm.Bernstein.ineq}). 
Therefore for every $x \geq 0$, with probability at least $1-2\e^{-x}$, 
\begin{align*} 
\absj{\sum_{i=1}^n Z_i} 
&\leq \sqrt{2 \vmax \norm{\bayes - \bayes_\tau}^2 x} + \frac{4 M^2 x}{3}
\\ 
&\leq \theta \norm{\bayes - \bayes_\tau}^2 + \paren{ \frac{\vmax}{2 \theta} + \frac{4 M^2}{3} } x 
\end{align*}
for every $\theta >0$, using $2ab \leq \theta a^2 + \theta^{-1}b^2$. 
\hfill\blackbox

A key argument in the proof is the following lemma. 
\begin{lemma}\label{lem.upperbounds.Bernstein.Linear}
For every $\mM_n$, if \eqref{hyp.donnees-bornees} holds true, 
the following holds true with probability one\textup{:} 
\begin{gather} 
\label{eq.maj.normH-(mum-mu)i.donnees-bornees}
\forall i \in \set{1, \ldots, n} \, , \quad 
 \normH{\bayes_i}  \leq M 
 \, , 
\qquad 
 \normH{\varepsilon_i} \leq  2 M
\\
\label{eq.lem.upperbounds.Bernstein.Linear}
\text{and} \quad 
\normH{(\bayes-\bayes_\tau)_i}  \leq 2 M
\quad \text{so that} \quad  \absj{Z_i} \leq 4 M^2 \enspace .
\\
\label{eq.lem.upperbounds.Bernstein.Linear.2}
\quad \text{In addition,} \quad 
	\sum_{i=1}^n \var\paren{Z_i} \leq \vmax \norm{\bayes-\bayes_\tau}^2 \enspace .
\end{gather}
\end{lemma}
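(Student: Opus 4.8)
The plan is to prove the three displayed bounds in sequence, each relying on assumption \eqref{hyp.donnees-bornees} and, for the last two, on the fact recorded in \eqref{eq.regressogram.Hilbert} that $\Pi_m$ is a local-averaging operator.

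\textbf{Step 1 (bounds on $\normH{\bayes_i}$ and $\normH{\varepsilon_i}$).} I would start from the defining property of the mean element, $\prodH{\bayes_i}{g} = \E\croch{\prodH{Y_i}{g}}$ for every $g \in \H$, evaluated at $g = \bayes_i$. This gives $\normH{\bayes_i}^2 = \E\croch{\prodH{Y_i}{\bayes_i}} \le \E\croch{\normH{Y_i}}\,\normH{\bayes_i} \le M\,\normH{\bayes_i}$, using Cauchy--Schwarz in $\H$ inside the expectation and then \eqref{hyp.donnees-bornees}; dividing by $\normH{\bayes_i}$ yields $\normH{\bayes_i} \le M$. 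The triangle inequality then gives $\normH{\varepsilon_i} = \normH{Y_i - \bayes_i} \le \normH{Y_i} + \normH{\bayes_i} \le 2M$ almost surely, which is \eqref{eq.maj.normH-(mum-mu)i.donnees-bornees}.

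\textbf{Step 2 (bounds on $\normH{(\bayes-\bayes_m)_i}$ and $\absj{Z_i}$).} Applying \eqref{eq.regressogram.Hilbert} with $g = \bayes$, for $i$ in a segment $\lambda \in m$ the element $(\bayes_m)_i$ is the arithmetic mean of $(\bayes_j)_{j\in\lambda}$, hence $\normH{(\bayes_m)_i} \le M$ by the triangle inequality and Step 1. Combined with $\normH{\bayes_i}\le M$ this gives $\normH{(\bayes-\bayes_m)_i} \le 2M$, and then Cauchy--Schwarz together with the bound $\normH{\varepsilon_i}\le 2M$ yields $\absj{Z_i} = \absj{\prodH{(\bayes-\bayes_m)_i}{\varepsilon_i}} \le \normH{(\bayes-\bayes_m)_i}\,\normH{\varepsilon_i} \le 4M^2$, i.e.\ \eqref{eq.lem.upperbounds.Bernstein.Linear}.

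\textbf{Step 3 (variance bound).} Since $\bayes$ is deterministic and $\E\croch{\varepsilon_i}=0$, each $Z_i$ is centered, so $\var(Z_i) = \E\croch{Z_i^2}$. Writing $u_i = (\bayes-\bayes_m)_i$, which is deterministic, I would bound $\E\croch{\prodH{u_i}{\varepsilon_i}^2}$ either by Cauchy--Schwarz pointwise, $\prodH{u_i}{\varepsilon_i}^2 \le \normH{u_i}^2\normH{\varepsilon_i}^2$, followed by taking expectations, or by writing $\E\croch{\prodH{u_i}{\varepsilon_i}^2} = \prodH{u_i}{\Sigma_i u_i} \le \normH{u_i}^2 \tr(\Sigma_i)$ with $\Sigma_i$ the covariance operator of $\varepsilon_i$; either route gives $\var(Z_i) \le v_i\,\normH{u_i}^2 \le \vmax\,\normH{u_i}^2$. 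Summing over $i$ and using $\sum_{i=1}^n \normH{(\bayes-\bayes_m)_i}^2 = \norm{\bayes-\bayes_m}^2$ gives \eqref{eq.lem.upperbounds.Bernstein.Linear.2}. The only step that requires a moment's care is Step 1 --- extracting $\normH{\bayes_i}\le M$ from \eqref{hyp.donnees-bornees} by viewing $\normH{\bayes_i}^2$ as an expectation of an inner product; everything else is the triangle inequality, the explicit averaging formula, and Cauchy--Schwarz.
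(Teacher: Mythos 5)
Your proof is correct and follows essentially the same route as the paper's: triangle inequality plus the averaging formula for $\Pi_m$, then Cauchy--Schwarz for both $\absj{Z_i}$ and the variance bound. The only (immaterial) difference is in deriving $\normH{\bayes_i}\leq M$, which you obtain from the defining property of the mean element via Cauchy--Schwarz, whereas the paper reads it off from $v_i=\E\croch{k(X_i,X_i)}-\normH{\bayes_i}^2\geq 0$.
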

\begin{proof}[of Lemma~\ref{lem.upperbounds.Bernstein.Linear}]
First, remark that for every $i$, 
\[ v_i = \E\crochb{ \norms{\varepsilon_i}^2 } = \E\crochb{k(X_i,X_i)} - \normHs{\bayes_i}^2 \geq 0 \enspace , \]
so that with \eqref{hyp.donnees-bornees},
\[ \normHs{\bayes_i}^2 \leq \E\crochb{k(X_i,X_i)} \leq M^2 \enspace , \]
which proves the first bound in Eq.~\eqref{eq.maj.normH-(mum-mu)i.donnees-bornees}. 
As a consequence, by the triangular inequality, 
\[ \normH{\varepsilon_i} \leq \normH{Y_i} + \normH{\bayes_i} \leq 2 M 
\enspace , \]
that is, the second inequality  in Eq.~\eqref{eq.maj.normH-(mum-mu)i.donnees-bornees} holds true. 

Let us now define for every  $i \in \set{1, \ldots, n}$, the integer $K(i) \in \sets{1, \ldots,  D_\tau}$ 
such that $I_{K(i)}^\tau= \inter{ \tau_{K(i)-1}+1, \tau_{K(i)} }$ is the unique interval of the segmentation $\tau$ such that $ i\in I^\tau_{K(i)}$. 
Then, 
\[ (\bayes - \bayes_\tau)_i = \frac{1}{ \tau_{K(i)}-\tau_{K(i)-1} } \sum_{j \in I^\tau_{K(i)}} (\bayes_i - \bayes_j) , \]
so that the triangular inequality and Eq.~\eqref{eq.maj.normH-(mum-mu)i.donnees-bornees} imply 
\begin{align*} 
\normH{(\bayes-\bayes_\tau)_i} &\leq \sup_{j \in I^\tau_{K(i)}} \normH{\bayes_i - \bayes_j}
\leq \sup_{1 \leq j,k \leq n} \normH{\bayes_k - \bayes_j}
\leq 2 \sup_{1 \leq j \leq n} \normH{\bayes_j} 
\leq 2 M
\enspace ,
\end{align*}
that is, the first part of Eq.~\eqref{eq.lem.upperbounds.Bernstein.Linear} holds true. 
The second part of Eq.~\eqref{eq.lem.upperbounds.Bernstein.Linear} directly follows from Cauchy-Schwarz's inequality. 
For proving Eq.~\eqref{eq.lem.upperbounds.Bernstein.Linear.2}, we remark that 
\begin{align*} 
\E\croch{Z_i^2} 
&= \E \crochB{  \prodHb{(\bayes-\bayes_\tau)_i}{\varepsilon_i}^2 } 
\\ 
&\leq \normH{(\bayes-\bayes_\tau)_i}^2 \E\croch{\normH{\varepsilon_i}^2} 
\qquad \text{by Cauchy-Schwarz's inequality}
\\
&
= \normH{(\bayes-\bayes_\tau)_i}^2 v_i \leq \normH{(\bayes-\bayes_\tau)_i}^2 \vmax  \enspace ,
\\
\text{so that} \quad 
	\sum_{i=1}^n \var\paren{Z_i} &\leq \vmax \norm{\bayes-\bayes_\tau}^2 \enspace.
\end{align*}
\end{proof}



\bibliography{kernelchpt}

\clearpage
\section{Supplementary material}
\label{sec.supmat}

\subsection{Classical concentration inequalities}
This section collects a few results that are used throughout the paper.

\subsubsection{Bernstein's inequality}

\begin{proposition}[Bernstein's inequality, as stated by \citealt{Mas:2003:St-Flour}, Proposition~2.9]\label{thm.Bernstein.ineq}
Let $X_1,\ldots,X_n$ be independent real-valued random variables.
Assume that some positive constants $v$ and $c$ exist such that, for every $k\geq 2$
\begin{align}
\label{constraint.Bernstein.moment}
  \sum_{i=1}^n \E\crochj{ \absj{X_i}^k} 
  \leq \frac{k!}{2} v c^{k-2}\enspace.
\end{align}
Then, for every $x>0$,
\begin{align*} 
  \P\parenj{ \sum_{i=1}^n \parenb{ X_i - \E\crochs{ X_i } } > \sqrt{ 2 v x  }  + c x } \leq \e^{-x} \enspace. 
\end{align*}
In particular, if for every $i \in \sets{1, \ldots, n}$, $\abss{X_i} \leq 3c$ almost surely, Eq.~\eqref{constraint.Bernstein.moment} holds true with $v=\sum_{i=1}^n \var\parens{X_i}$.
\end{proposition}

\subsubsection{Pinelis-Sakhanenko's inequality}

\begin{proposition}[\citet{Pin:Sak:1986}, Corollary~1] \label{prop.Pinelis.Sakhanenko}
  Let $X_1,\ldots,X_n$ be independent random variables with values in some Hilbert space $\mathcal{H}$. 
  Assume the $X_i$ are centered and that constants $\sigma^2,c>0$ exist such that for every $p\geq 2$, 
\begin{align*}
  \sum_{i=1}^n \E\croch{\normH{X_i}^p} \leq \frac{p!}{2} \sigma^2 c^{p-2} \enspace,
\end{align*}
Then, for every $x>0$,
\begin{align*}
  \P\croch{ \normH{ \sum_{i=1}^n X_i} > x } \leq 2 \exp\croch{- \frac{x^2}{2\paren{\sigma^2 + cx}}} \enspace.
\end{align*}
\end{proposition}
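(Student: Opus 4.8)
The plan is to control $\P\croch{\normH{S_n} > x}$, where $S_n \egaldef \sum_{i=1}^n X_i$, by the Chernoff method, choosing the auxiliary exponential function so that it interacts well with the Hilbertian structure. First I would note that $e^{\lambda t} \leq 2\cosh(\lambda t)$ for all $\lambda, t \geq 0$, so that
\[
\P\croch{\normH{S_n} > x} \leq e^{-\lambda x}\,\E\croch{e^{\lambda\normH{S_n}}} \leq 2\, e^{-\lambda x}\,\E\croch{\cosh\paren{\lambda\normH{S_n}}}
\quad\text{for every }\lambda > 0 ;
\]
this is where the factor $2$ in the statement will come from. The point of replacing $e^{\lambda\normH{\cdot}}$ by $\cosh(\lambda\normH{\cdot})$ is that the latter is an \emph{entire function of} $\normH{\cdot}^2$, namely $\cosh(\lambda\normH{x}) = \sum_{k\geq 0}\frac{\lambda^{2k}}{(2k)!}\normH{x}^{2k}$, hence a smooth radial function on $\H$, whereas $e^{\lambda\normH{\cdot}}$ involves odd powers of $\normH{\cdot}$ and is not smooth at $0$.

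The core is then a one-step recursion, run on $\normH{S_k}^2$ with $S_k \egaldef \sum_{i\leq k} X_i$. Conditioning on $S_{k-1}$ and using independence and centering of $X_k$, one has $\normH{S_k}^2 = \normH{S_{k-1}}^2 + 2\prodH{S_{k-1}}{X_k} + \normH{X_k}^2$, with $\E_{X_k}\croch{\prodH{S_{k-1}}{X_k}} = 0$ and, by Cauchy--Schwarz, $\E_{X_k}\croch{\absj{\prodH{S_{k-1}}{X_k}}^p} \leq \normH{S_{k-1}}^p\,\E_{X_k}\croch{\normH{X_k}^p}$ for all $p \geq 1$. Writing $G_\lambda(u) \egaldef \cosh(\lambda\sqrt{u})$, whose derivatives are all nonnegative on $[0,\infty)$, and expanding $G_\lambda\paren{\normH{S_{k-1}}^2 + 2\prodH{S_{k-1}}{X_k} + \normH{X_k}^2}$ as a power series in the perturbation, then taking $\E_{X_k}$, the odd-order contributions of $\prodH{S_{k-1}}{X_k}$ vanish and the remaining terms are controlled by $\E_{X_k}\croch{\normH{X_k}^p}$; using the hypothesis $\sum_i \E\croch{\normH{X_i}^p} \leq p!\,\sigma^2 c^{p-2}$, I expect to obtain, for $c\lambda < 1$, a recursive inequality of the type $\E\croch{G_\lambda(\normH{S_k}^2)} \leq \exp\paren{\psi_\lambda\paren{\E\croch{\normH{X_k}^2},\dots}}\,\E\croch{G_\lambda(\normH{S_{k-1}}^2)}$, which telescopes to $\E\croch{\cosh\paren{\lambda\normH{S_n}}} \leq \exp\paren{\frac{\lambda^2\sigma^2}{2(1 - c\lambda)}}$ up to the exact bookkeeping of constants, exactly as in the scalar Bernstein recursion.

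Finally I would plug this back into the Chernoff bound and optimize over $\lambda$: $\P\croch{\normH{S_n} > x} \leq 2\exp\paren{\frac{\lambda^2\sigma^2}{2(1-c\lambda)} - \lambda x}$, and the choice $\lambda = \frac{x}{\sigma^2 + cx}$, as in the scalar Bernstein inequality (Theorem~\ref{thm.Bernstein.ineq}), yields $\P\croch{\normH{S_n} > x} \leq 2\exp\croch{-\frac{x^2}{2(\sigma^2 + cx)}}$.

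The main obstacle is the recursion step: carrying out the power-series expansion of $G_\lambda$ at a generic point $\normH{S_{k-1}}^2$ and showing that, thanks to the $2$-smoothness of the Hilbert norm, only the scalar quantities $\sum_i\E\croch{\normH{X_i}^p}$ enter the bound, with no factor depending on the dimension of $\H$ (such a factor would be fatal when $\H$ is infinite-dimensional). An alternative, less slick route would be to bound the even integer moments $\E\croch{\normH{S_n}^{2m}}$ directly via a Hilbert-space Rosenthal-type inequality and then sum the series $\sum_{m\geq 0}\frac{\lambda^{2m}}{(2m)!}\E\croch{\normH{S_n}^{2m}}$; there the delicate point is instead the combinatorics behind Rosenthal's inequality.
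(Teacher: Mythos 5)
A preliminary remark: the paper gives no proof of Proposition~\ref{prop.Pinelis.Sakhanenko}. It is imported verbatim as Corollary~1 of \cite{Pin:Sak:1986} and used as a black box inside the proof of Proposition~\ref{prop.concentration.norm.projection.Bernstein}, so there is no in-paper argument to compare yours against; your attempt can only be measured against the original proof and against whether it actually closes.

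Your architecture is the right one --- it is essentially the Pinelis--Sakhanenko method (and Pinelis's later treatment of $2$-smooth spaces): a Chernoff bound routed through $\cosh\paren{\lambda\normH{S_n}}$, which supplies the factor $2$ via $e^{t}\leq 2\cosh t$ for $t\geq 0$ and turns the norm into the smooth quantity $\sprodscal{S_n}{S_n}$; a one-step recursion using $\normH{S_{k-1}+X_k}^2=\normH{S_{k-1}}^2+2\prodH{S_{k-1}}{X_k}+\normH{X_k}^2$ with the cross term killed by centering; and the Bernstein optimization $\lambda=x/(\sigma^2+cx)$. The gap is that the step you defer --- proving $\E\croch{\cosh\paren{\lambda\normH{S_n}}}\leq\exp\paren{\tfrac{\lambda^2\sigma^2}{2(1-c\lambda)}}$ --- is the entire content of the theorem, and your sketch does not show it closes with the stated constants. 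The most natural execution of your recursion (second-order Taylor expansion of $x\mapsto\cosh(\lambda\normH{x})$, whose Hessian is bounded by $\lambda^2\cosh(\lambda\normH{x})$ in a Hilbert space, followed by $\cosh(\lambda\normH{y+tX})\leq\cosh(\lambda\normH{y})e^{\lambda\normH{X}}$ on the remainder) yields a per-step factor $1+\tfrac{\lambda^2}{2}\E\croch{\normH{X_k}^2e^{\lambda\normH{X_k}}}$; summing the moment hypothesis $\sum_i\E\croch{\normH{X_i}^{j+2}}\leq(j+2)!\,\sigma^2c^{j}$ over $j$ gives an exponent of order $\lambda^2\sigma^2(1-c\lambda)^{-3}$ rather than $\tfrac{\lambda^2\sigma^2}{2(1-c\lambda)}$, hence after optimization a deviation bound with strictly worse constants than $2\exp\scroch{-x^2/(2(\sigma^2+cx))}$. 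Reaching the exact stated form requires the finer comparison argument of the original paper, so as written your proposal proves a Pinelis--Sakhanenko-\emph{type} inequality but not this proposition; since the result is a classical external tool, citing it as the paper does is the appropriate resolution.
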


\subsubsection{Talagrand's inequality}

The following proposition is a refined version of 
Talagrand's concentration inequality \citep{Tal:1996a}, 
as it is stated by \citet[Corollary~12.12]{Bou_Lug_Mas:2011:livre}. 

\begin{proposition}[Corollary~12.12 of \cite{Bou_Lug_Mas:2011:livre}]
\label{pro.talagrand}
Let $X_1, \ldots, X_n$ be independent vector-valued random variables and let 
\[ 
Z = \sup_{f \in \F} \sum_{i=1}^n X_{i,f}
\, . 
\]
Assume that for all $i \in  \sets{1, \ldots, n}$ and $f \in \F$, $\E[X_{i,f}]=0$ and $| X_{i,f} | \leq 1$. 
Define 
\[ \sigma^2 = \sup_{f \in \F} \sum_{i=1}^n \E \croch{ X_{i,f}^2 } 
\qquad \text{and} \qquad 
v = 2 \E[Z] + \sigma^2
\, . \]
Then, for all $x \geq 0$, 
\begin{align} 
\label{eq.talagrand.deviations-sup}
\P \parenj{ Z \geq \E[Z] + \sqrt{2 v x} + \frac{x}{3} } &\leq \e^{-x} \\
\label{eq.talagrand.deviations-inf}
\P \parenj{ Z \leq \E[Z] - \sqrt{2 v x} - \frac{x}{8} } &\leq \e^{-x} 
\, .
\end{align}
\end{proposition}

\subsection{Proof of Proposition~\ref{pro.concentration.quadratic.talagrand}}
\label{app.Talagrand-details}
The first step is to write $\norm{\Pi_\tau \varepsilon}$ of the form of $Z$ in Proposition~\ref{pro.talagrand} for some well-chosen $(X_{i,f})_{1 \leq i \leq n , \, f \in \mathcal{G}_\tau}$.
With for every $1\leq K\leq D_\tau$, $\overline{f}_K = 1/(\tau_K - \tau_{K-1}) \sum_{i=\tau_{K-1}+1}^{\tau_K} f_i$, it comes
\begin{align*}
\norms{\Pi_\tau \varepsilon} 
&= \sup_{f \in \H^n , \, \norm{f} \leq 1} \absb{ \prodscal{f}{\Pi_\tau \varepsilon} } 
\\
&= \sup_{f \in \H^n , \, \norm{f} \leq 1} \absb{ \prodscal{\Pi_\tau f}{ \varepsilon} } 
\\
&= \sup_{f \in \H^n , \, \sum_{K=1}^{D_\tau} (\tau_K - \tau_{K-1}) \norm{\overline{f}_{K}}^2 \leq 1} \absj{ \sum_{K=1}^{D_\tau} \sum_{i =\tau_{K-1}+1}^{\tau_K} \prodscal{\overline{f}_{K}}{ \varepsilon_i}_{\mathcal{H}} } 
\\
&= \sup_{f \in \mathcal{G}_\tau} \sum_{i=1}^n \overline{X}_{i,f}
\end{align*}
where $\mathcal{G}_\tau$ is some countable dense subset of 
\[ 
\setj{ f \in \H^n , \, \sum_{K=1}^{D_\tau} \paren{\tau_K-\tau_{K-1}} \norm{\overline{f}_{K}}_{\mathcal{H}}^2 \leq 1 } 
\]
(such a set $\mathcal{G}_\tau$ exists since $\H$ is separable), 
and for every $i \in \set{1, \ldots, n}$ and $f \in \mathcal{G}_\tau$, 
\[ \overline{X}_{i,f} = \prodscal{\overline{f}_{K(i)}}{ \varepsilon_i}_{\mathcal{H}} \]
where we recall that $K(i)$ is defined in the proof of Lemma~\ref{lem.upperbounds.Bernstein.Linear}. 

Let us now check that the assumptions of Proposition~\ref{pro.talagrand} are satisfied: $(\overline{X}_{1,f})_{f \in \mathcal{G}_\tau}, \ldots, (\overline{X}_{n,f})_{f \in \mathcal{G}_\tau}$ are independent since $\varepsilon_1, \ldots, \varepsilon_n$ are assumed independent. 
For every $i \in \set{1, \ldots, n}$ and $f \in \mathcal{G}_\tau$, 
\[ \E\croch{ \overline{X}_{i,f} } = \E\croch{ \prodscal{\overline{f}_{K(i)}}{ \varepsilon_i}_{\mathcal{H}} } = 0 \]
since $\overline{f}_{K(i)} \in \H$ is deterministic, and for every $f \in \mathcal{G}_{\tau}$, 
\[ 
\absj{ \overline{X}_{i,f} } 
= \absj{\prodscal{\overline{f}_{K(i)}}{ \varepsilon_i}_{\mathcal{H}}} 
\leq \norm{\overline{f}_{K(i)}}_{\mathcal{H}} \norm{\varepsilon_i}_{\mathcal{H}} 
\leq \frac{2M}{\sqrt{\tau_{K(i)} - \tau_{K(i)-1}}} \leq 2M \]
by Cauchy-Schwarz's inequality, assumption \eqref{hyp.donnees-bornees} and Lemma~\ref{lem.upperbounds.Bernstein.Linear}. 
So, we can apply Proposition~\ref{pro.talagrand} to 
\[ 
Z = \frac{1}{2M} \norm{\Pi_\tau \varepsilon} = \sup_{f \in \mathcal{G}_\tau} \sum_{i=1}^n X_{i,f} 
\]
where $X_{i,f} \egaldef (2M)^{-1} \overline{X}_{i,f}$. 

Before writing the resulting concentration inequality, let us first compute (and bound) the quantity denoted by $\sigma^2$ in the statement of Proposition~\ref{pro.talagrand}. 
For every $f \in \mathcal{G}_\tau$, 
\begin{align*}
4 M^2 \sum_{i=1}^n \E \croch{ X_{i,f}^2 } 
=  \sum_{i=1}^n \E \croch{ \prodscal{\overline{f}_{K(i)}}{ \varepsilon_i}_{\mathcal{H}} ^2 } 
&\leq  \sum_{i=1}^n \croch{ \norm{\overline{f}_{K(i)}}_{\mathcal{H}}^2 \E \croch{ \norm{\varepsilon_i}_{\mathcal{H}} ^2 } }
\\
&=  \sum_{K=1}^{D_\tau}  \croch{ \norm{\overline{f}_{K}}_{\mathcal{H}}^2 \sum_{i =\tau_{K-1}+1}^{\tau_K} v_i }
\\
&=  \sum_{K=1}^{D_\tau}  \croch{ (\tau_K - \tau_{K-1}) \norm{\overline{f}_{K}}_{\mathcal{H}}^2 v^\tau_K } 
\end{align*}
by Cauchy-Schwarz's inequality. 
So, by definition of $\mathcal{G}_{\tau}$ and $\sigma^2$, 
\[
\sigma^2 \leq \frac{1}{4 M^2} \max_{1\leq K\leq D_\tau}  v^\tau_{K} 
\, . 
\]

We can now write what Proposition~\ref{pro.talagrand} proves about the concentration of $\norms{\Pi_\tau \varepsilon}$: for every $x \geq 0$, with probability at least $1 - \e^{-x}$, 
\begin{align*} 
\norms{\Pi_\tau \varepsilon} - \E\crochb{\norms{\Pi_\tau \varepsilon}} 
\leq 2 M \sqrt{2 v x} + \frac{2 M x}{3}
\leq \sqrt{ 2 x \parenj{ 4 M  \E\crochb{\norms{\Pi_\tau \varepsilon}} +  \max_{1\leq K\leq D_\tau}  v^\tau_{K} }} + \frac{2 M x}{3} 
\enspace , 
\end{align*} 
and similarly, with probability at least $1 - \e^{-x}$,
\begin{align*} 
\norms{\Pi_\tau \varepsilon} - \E\crochb{\norms{\Pi_\tau \varepsilon}} 
\geq - \sqrt{ 2 x \paren{ 4 M  \E\crochb{\norms{\Pi_\tau \varepsilon}} + \max_{1\leq K\leq D_\tau}  v^\tau_{K} }} - \frac{M x}{4} 
\enspace . 
\end{align*} 
So, using a union bound, we have just proved Eq.~\eqref{eq.pro.concentration.quadratic.talagrand.1}. 
\hfill \blackbox

\subsection{Second method for choosing $c_1,c_2$ in KCP}
\label{sec.choix-c1-c2.variance}

We describe an alternative to the slope heuristics for choosing  $c_1,c_2$ in KCP. 

When prior information guarantee that the ``variance'' is almost constant 
and that no change occurs in some parts of the observed time series ---say, at the start and at the end---, 
we can estimate this ``variance'' within each of these parts and take $c_1=c_2$ equal to  
\begin{align}\label{eq.estimated.variance.Zaid}
  \widehat{c}_{var} \egaldef 2 \max\parenb{ \widehat{v}_s, \widehat{v}_e }, 
\end{align}
where
\begin{align*}
\widehat{v}_s 
& \egaldef 
\frac{1}{|I_s|-1} \sum_{i \in I_s} \crochj{ k(X_i,X_i) + \frac{1}{|I_s|^2} \sum_{j,\ell \in I_s} k(X_j,X_\ell) - \frac{2}{|I_s|} \sum_{j \in I_s} k(X_i,X_j) } 
\end{align*}
denotes the empirical variance of the start $(X_i)_{i \in I_s}$ of the time series, 
and $\widehat{v}_e$ is defined similarly from the end $(X_i)_{i \in I_e}$ of the time series. 
The fact that an estimate of the variance multiplied by 2 is a good choice for $c_1=c_2$ is justified by the numerical experiments made by \citet{Leb:2005} in the case of the linear kernel and one-dimensional data. 
This strategy was used successfully in the real-data 
experiments of an earlier version of the present paper \citep[][Section~6.2]{Arl_Cel_Har:2012:v1}.

\clearpage

\subsection{Additional details about the synthetic experiments}

\subsubsection*{Data generation process}

Table~\ref{tab.Sc1.moy-var} provides the values of the mean and variance of the seven distribution considered in Scenario~1. 
It shows that the pair (mean, variance) changes at every change-point in Scenario~1, but the mean sometimes stays constant.

\begin{table}
\centering
\begin{tabular}{l@{\hspace{0.025\textwidth}}l@{\hspace{0.025\textwidth}}l}
Distribution & Mean & Variance 
\\ \hline
$\mathcal{B}(10, 0.2)$       & $2$            & $1.6$ \\
$\mathcal{NB}\paren{3, 0.7}$ & $9/7 \approx 1.29$         & $90/49 \approx 1.84$ \\
$\mathcal{H}(10, 5, 2)$      & $1$             & $4/9 \approx 0.44$ \\
$\mathcal{N}(2.5, 0.25)$     & $2.5$          & $0.25$ \\
$\gamma\paren{0.5, 5}$       & $2.5$          & $12.5$ \\
$\mathcal{W}(5,2)$           & $\frac{5\sqrt{\pi}}{2} \approx 4.43$ & $25 ( 1- \frac{\pi}{4} ) \approx 5.37$ \\
$\mathcal{P}ar(1.5, 3)$      & $9/4=2.25$            & $27/16 \approx 1.69$ 
\end{tabular}
\caption{\label{tab.Sc1.moy-var} Scenario~1, mean and variance for the seven distributions considered.}
\end{table}
\subsubsection*{Further results on synthetic data}
\label{sec.appendice.simul}
This section gathers some additional results concerning the experiments of Section~\ref{sec.synthetic.data}.

\clearpage 


\begin{figure}
\centering
\includegraphics[width = .478\textwidth]{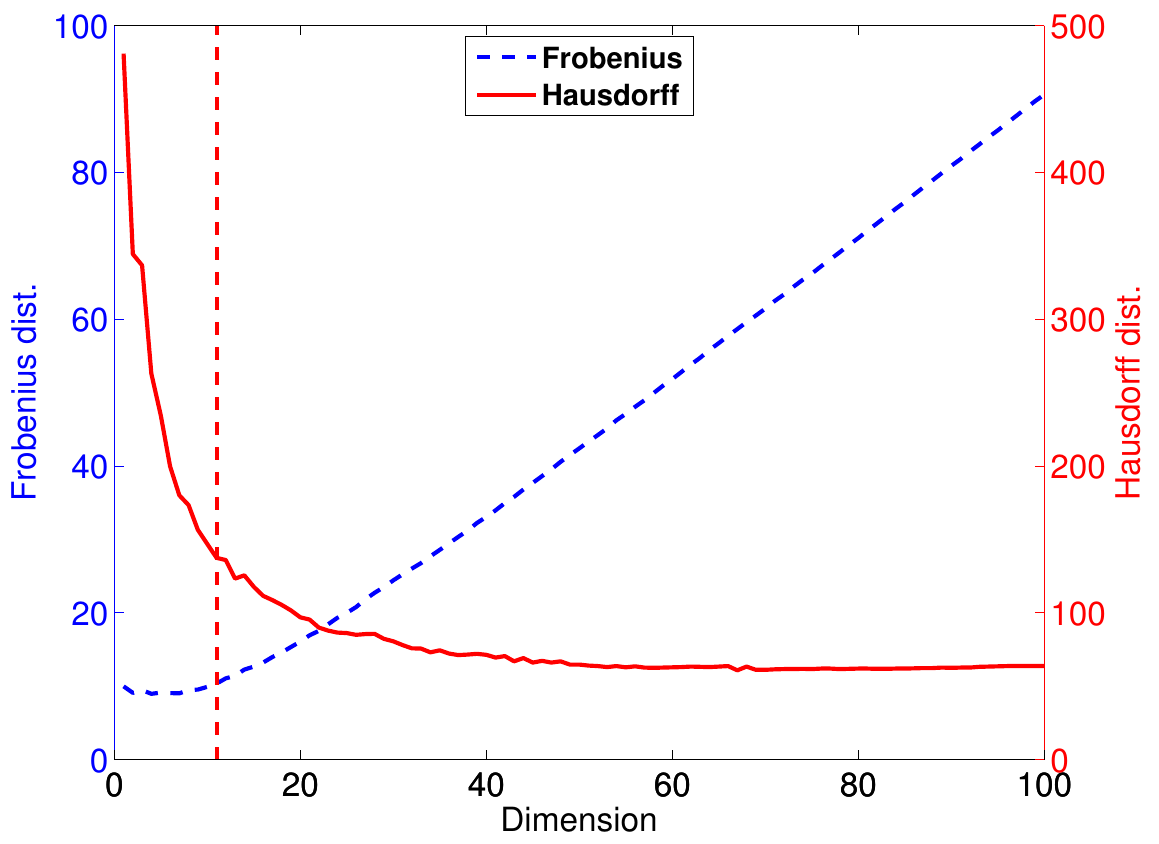}
\caption{\label{fig.Sc1.klin.dist} 
Scenario~1: $\X=\R$, variable (mean, variance). 
Performance of KCP with kernel $\klin$. 
Average distance ($d_F$ or $d_H$) between $\tauh(D)$ and $\taus$, as a function of $D$.
}
\end{figure}

\begin{figure}
  \centering
  \figtroishspace
  \subfloat[$k=\klin$]{\label{fig.Sc1.klin-Dh} 
\includegraphics[width = \figtroiswidth]{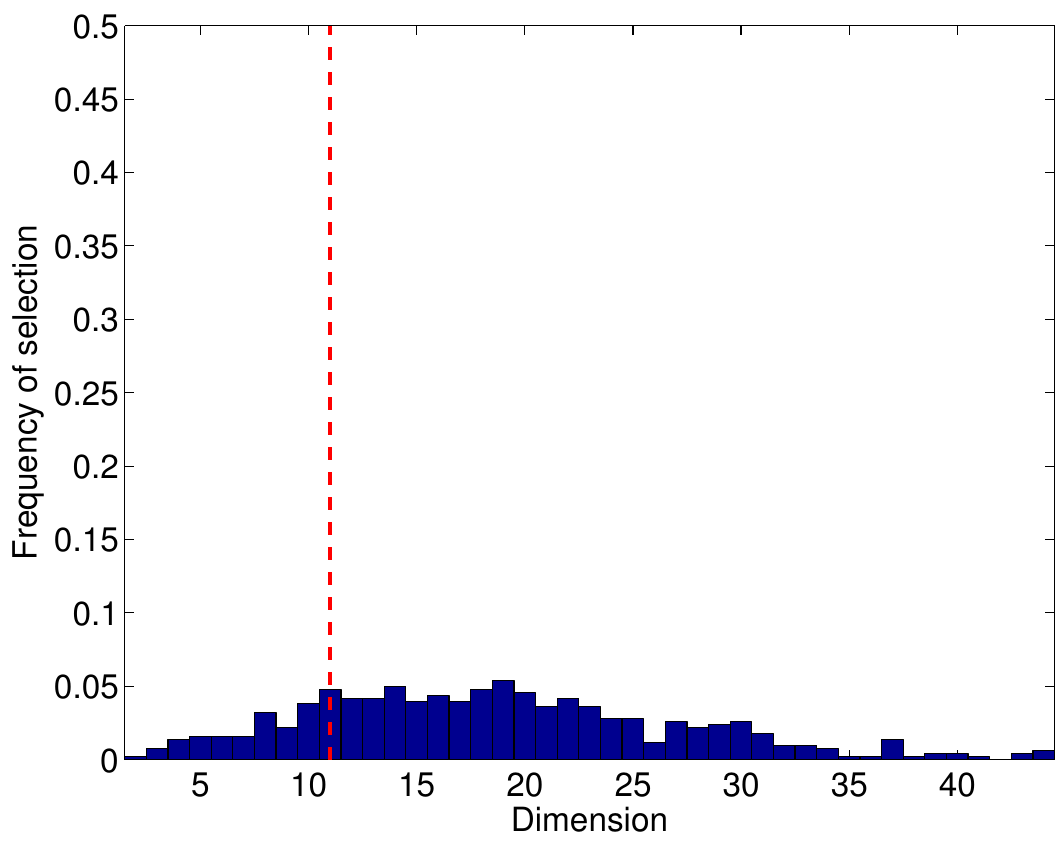}
}
 \hspace*{0.01\textwidth}
  \subfloat[$k=\kHer_{1}$]{\label{fig.Sc1.kHer-Dh}
\includegraphics[width = \figtroiswidth]{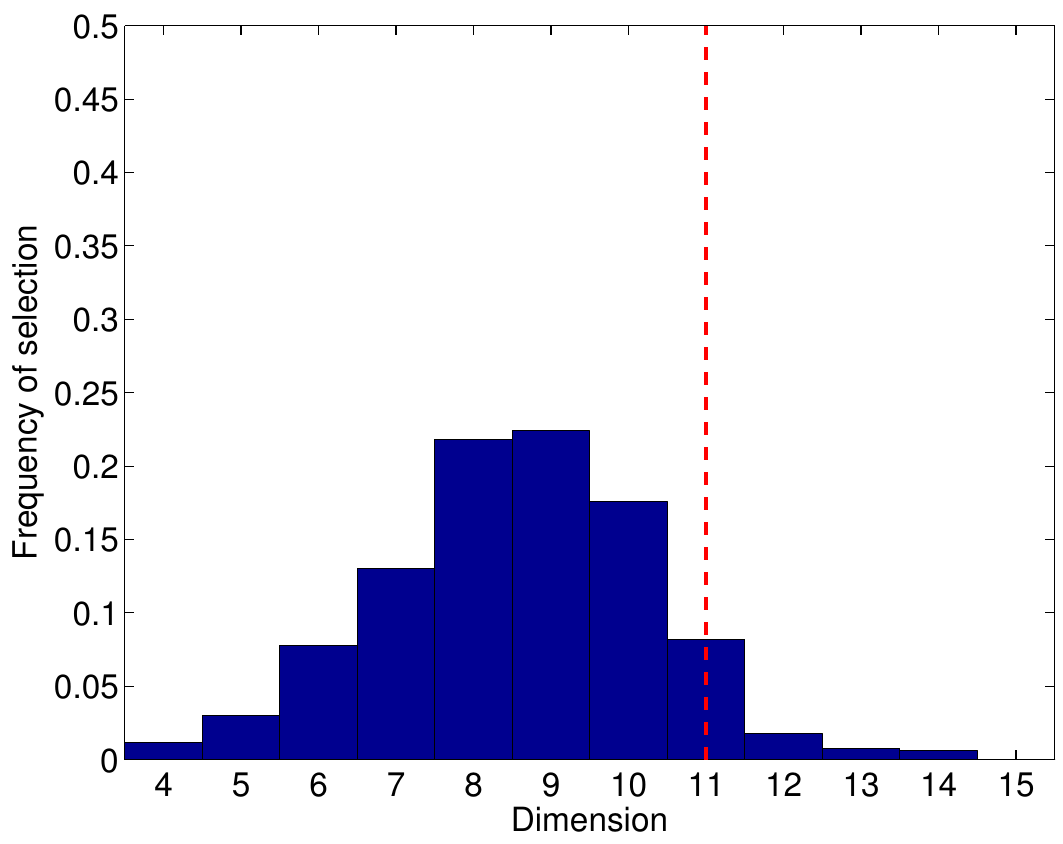}
}
 \hspace*{0.01\textwidth}
  \subfloat[$k=\kGau_{0.1}$]{\label{fig.Sc1.kGau-Dh}
\includegraphics[width = \figtroiswidth]{fig_Distrib_Dimension_type_2kerneltype_L2_bw_1_Nb500}
}
\caption{%
Scenario~1: $\X=\R$, variable (mean, variance). 
KCP with three different kernels $k$.  
Distribution of $\Dh$. 
(Figure~\ref{fig.Sc1.kGau-Dh} is a copy of Figure~\ref{fig.Sc1.Dh}, that we repeat here for making comparisons easier.) 
}
\label{fig.Sc1.Dh.3k}
\end{figure}

\clearpage

\begin{figure}
  \centering
\figtroishspace
  \subfloat[$k=\klin$]{\label{fig.Sc1.freq-Ds.klin} 
\includegraphics[width = \figtroiswidth]{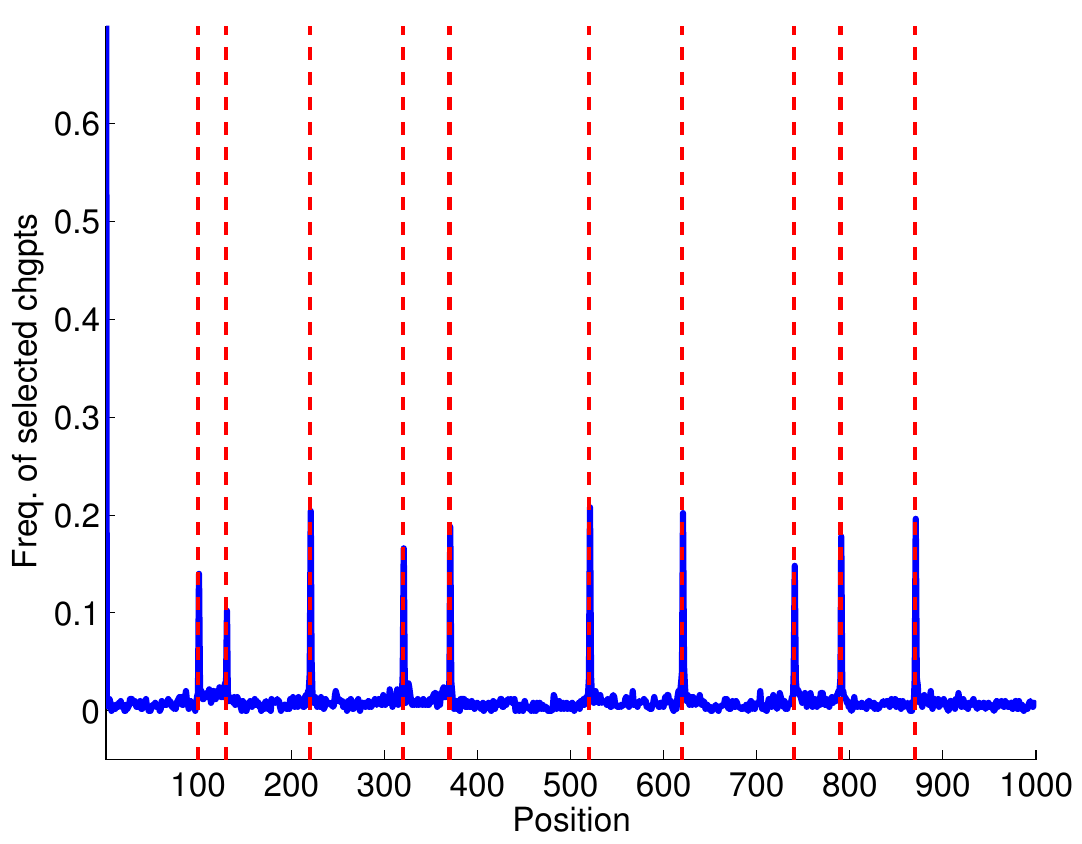}
}
 \hspace*{0.01\textwidth}
  \subfloat[$k=\kHer_{1}$]{\label{fig.Sc1.freq-Ds.kHer} 
\includegraphics[width = \figtroiswidth]{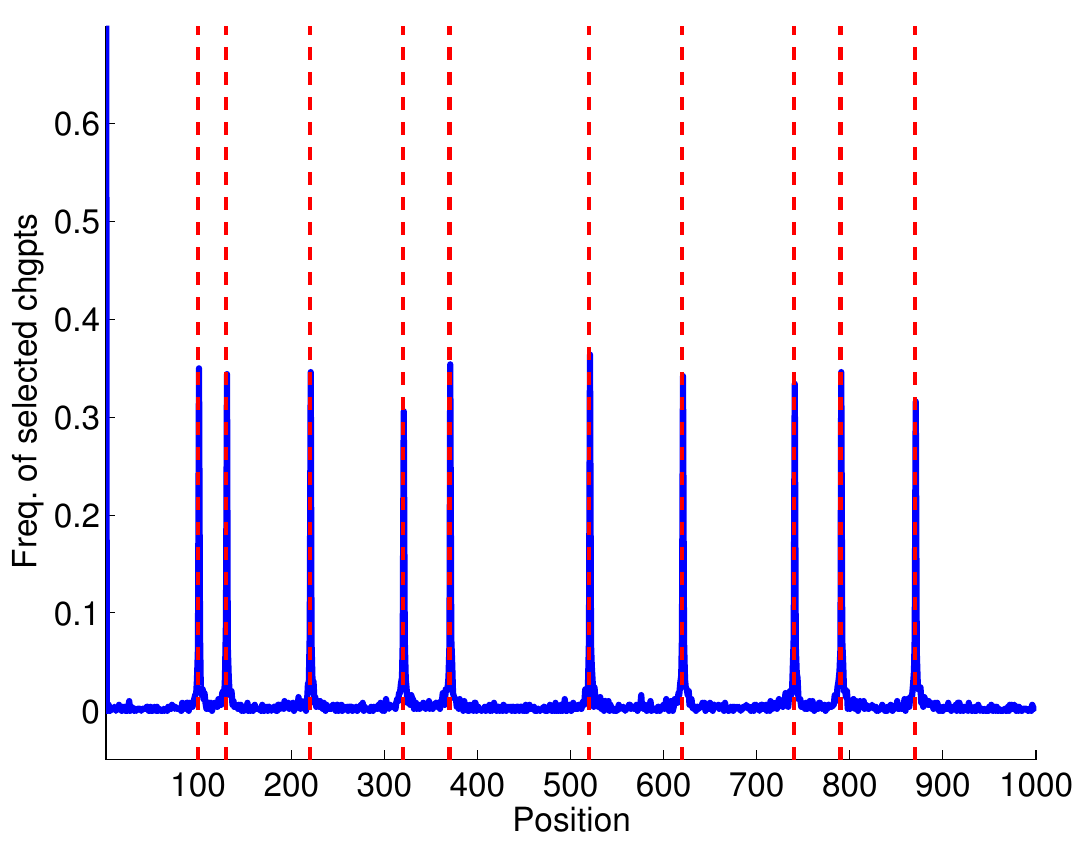}
}
 \hspace*{0.01\textwidth}
  \subfloat[$k=\kGau_{0.1}$]{\label{fig.Sc1.freq-Ds.kGau}
\includegraphics[width = \figtroiswidth]{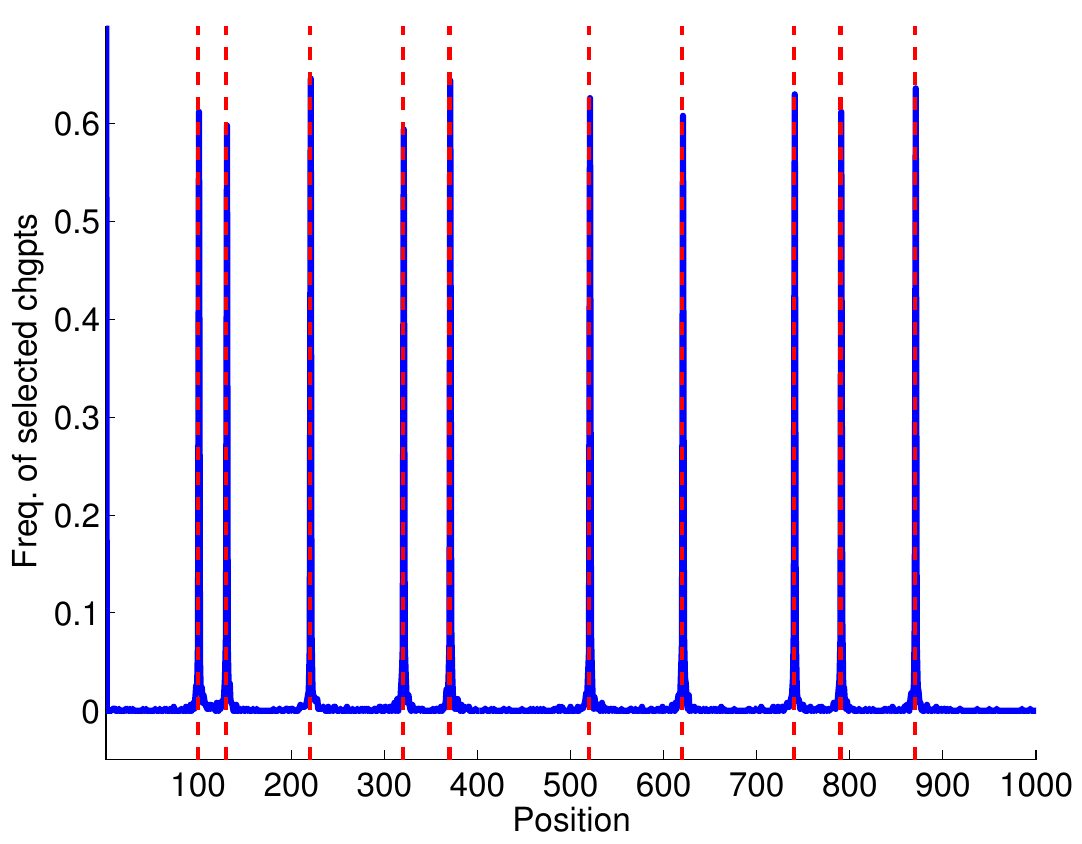}
}
  \caption{
Scenario~1: $\X=\R$, variable (mean, variance). 
Performance of KCP with three different kernels. 
Probability, for each instant $i \in \sets{1, \ldots, n}$, that $\tauh(\Ds)$ puts a change-point at $i$.
}
  \label{fig.Sc1.freq-Ds} 
\end{figure}

\begin{figure}
  \centering
\figtroishspace
  \subfloat[$k=\klin$]{\label{fig.Sc1.klin.freq} 
\includegraphics[width = \figtroiswidth]{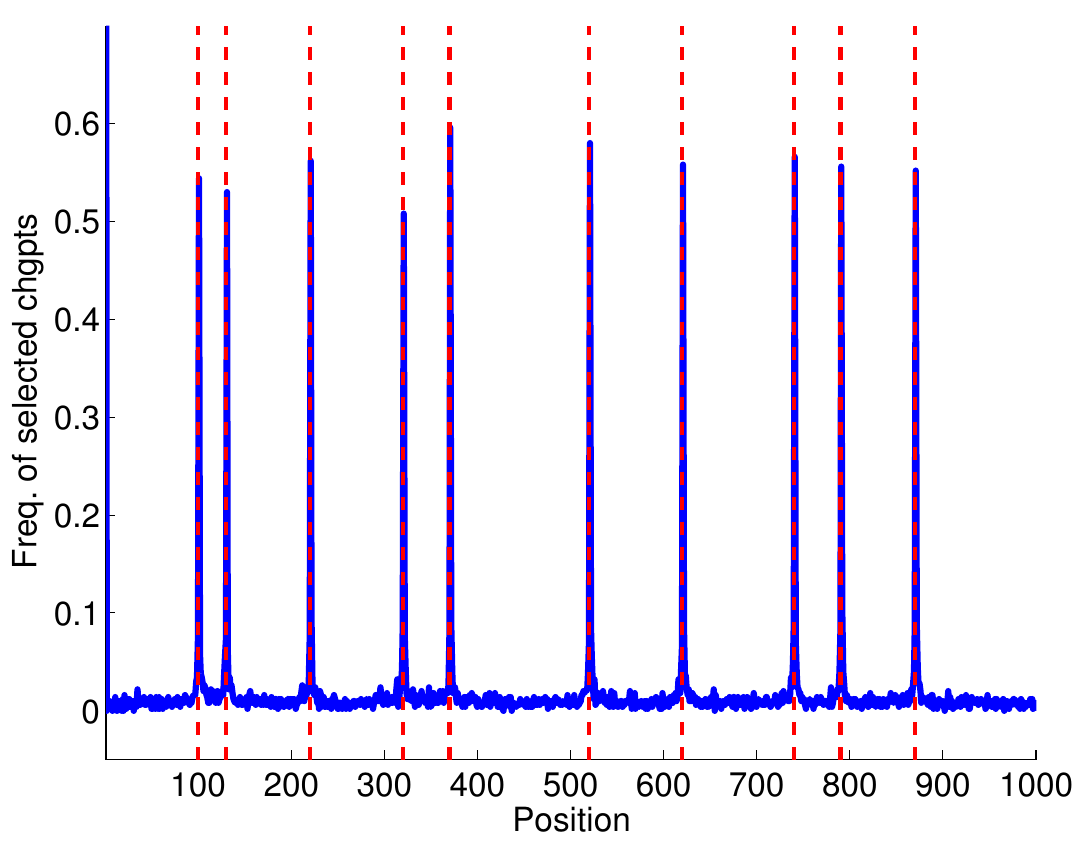}
}
 \hspace*{0.01\textwidth}
  \subfloat[$k=\kHer_{1}$]{\label{fig.Sc1.kHer.freq} 
\includegraphics[width = \figtroiswidth]{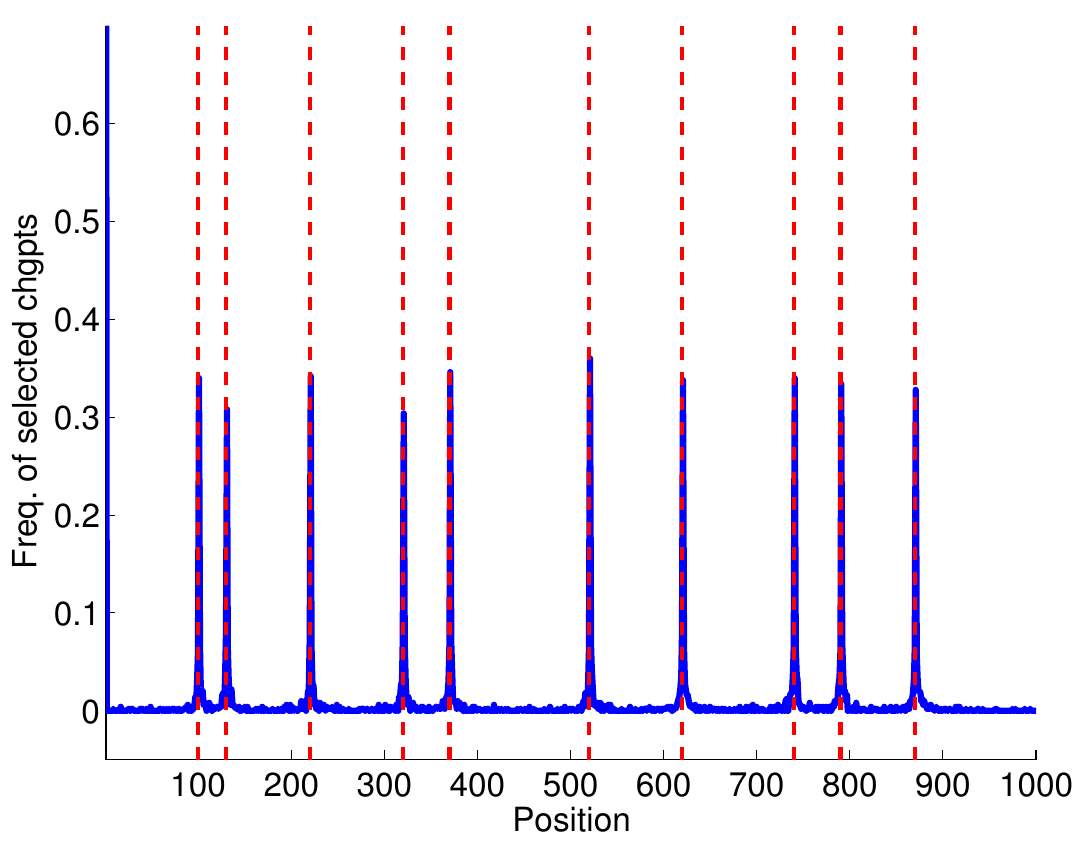}
}
 \hspace*{0.01\textwidth}
  \subfloat[$k=\kGau_{0.1}$]{\label{fig.Sc1.kGau.freq} 
\includegraphics[width = \figtroiswidth]{fig_Freq_chgpts_type_2kerneltype_L2_bw_1_Nb500}
}
  \caption{
Scenario~1: $\X=\R$, variable (mean, variance). 
Performance of KCP with three different kernels. 
Probability, for each instant $i \in \sets{1, \ldots, n}$, that $\tauh=\tauh(\Dh)$ puts a change-point at $i$. 
\\
For $k=\klin$, notice the high `baseline' level of (wrong) detection of change-points, which is due to a frequent overestimation of the number of change-points, see Figure~\ref{fig.Sc1.klin-Dh}. 
\\
(Figure~\ref{fig.Sc1.kGau.freq} is a copy of Figure~\ref{fig.Sc1.freq}, that we repeat here for making comparisons easier.) 
}
  \label{fig.Sc1.freq-Dh} 
\end{figure}

\clearpage


\begin{figure}
  \centering
\figtroishspace
  \subfloat[$k=\klin$]{\label{fig.Sc2.klin-freqDh} 
\includegraphics[width = \figtroiswidth]{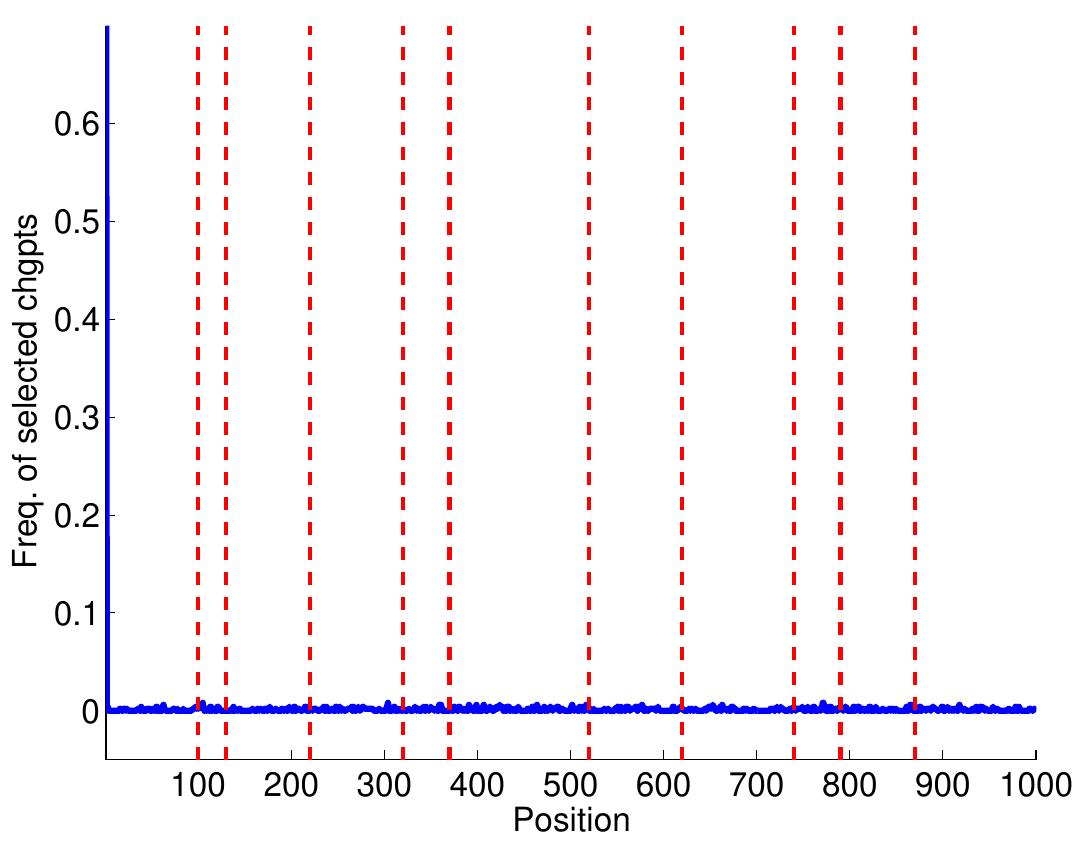}
}
 \hspace*{0.01\textwidth}
  \subfloat[$k=\kHer_{0.1}$]{\label{fig.Sc2.kHer-freqDh}
\includegraphics[width = \figtroiswidth]{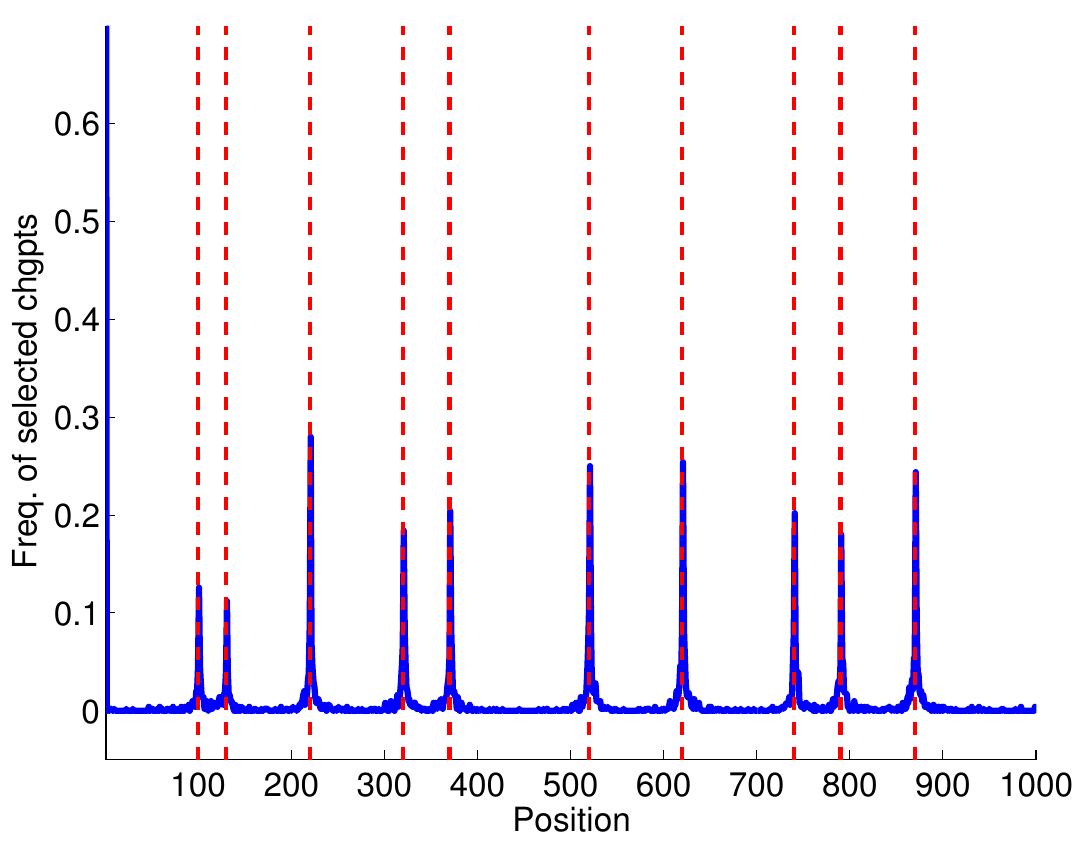}
}
 \hspace*{0.01\textwidth}
  \subfloat[$k=\kGau_{0.16}$]{\label{fig.Sc2.kGau-freqDh} 
\includegraphics[width = \figtroiswidth]{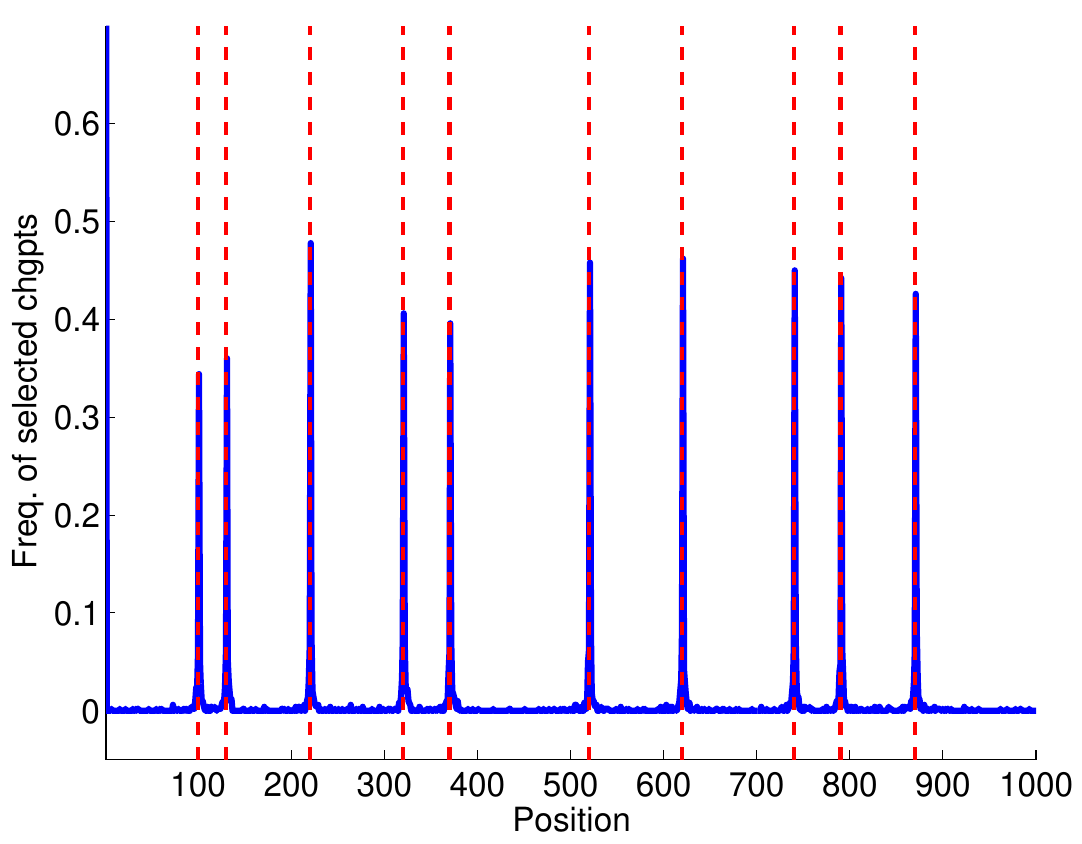}
}
\caption{%
Scenario 2: $\X=\R$, constant mean and variance. 
Performance of KCP with three different kernels $k$.  
Probability, for each instant $i \in \sets{1, \ldots, n}$, that $\tauh = \tauh(\Dh)$ puts a change-point at $i$.
}
\label{fig.Sc2.freqDh}
\end{figure}

\begin{figure}
  \centering
\figtroishspace
  \subfloat[$k=\klin$]{\label{fig.Sc2.klin-Dh} 
\includegraphics[width = \figtroiswidth]{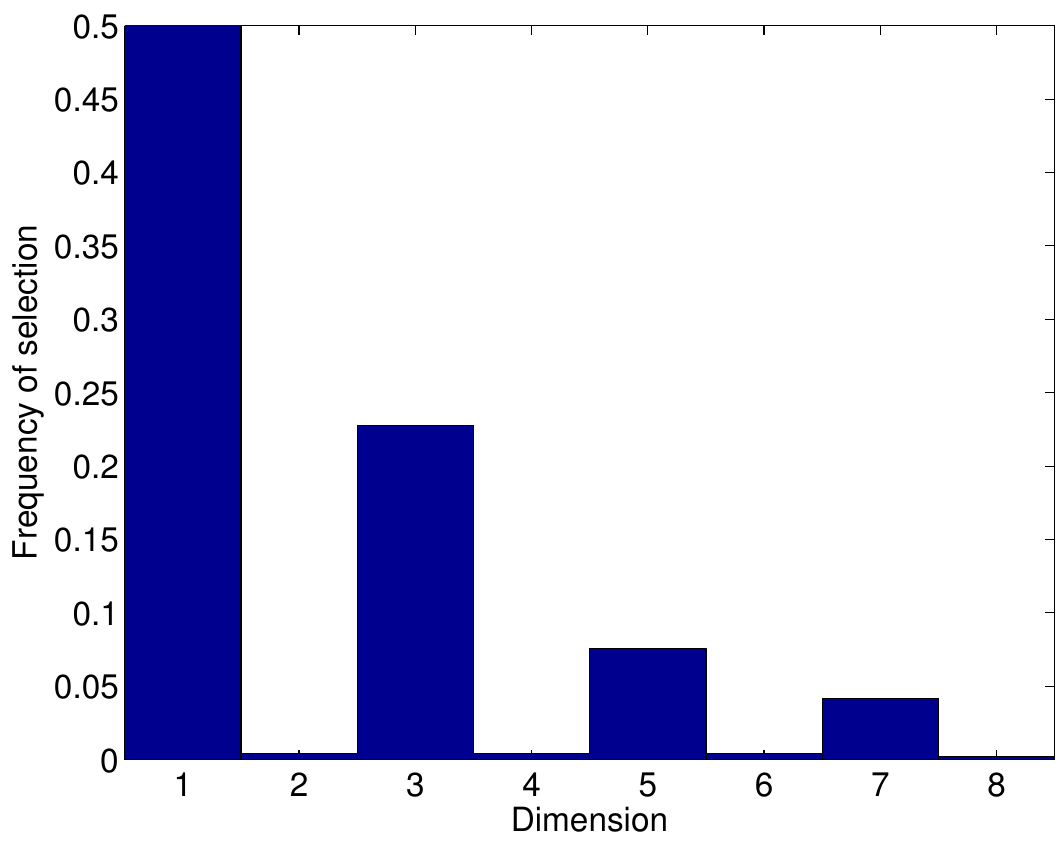}
}
 \hspace*{0.01\textwidth}
  \subfloat[$k=\kHer_{0.1}$]{\label{fig.Sc2.kHer-Dh}
\includegraphics[width = \figtroiswidth]{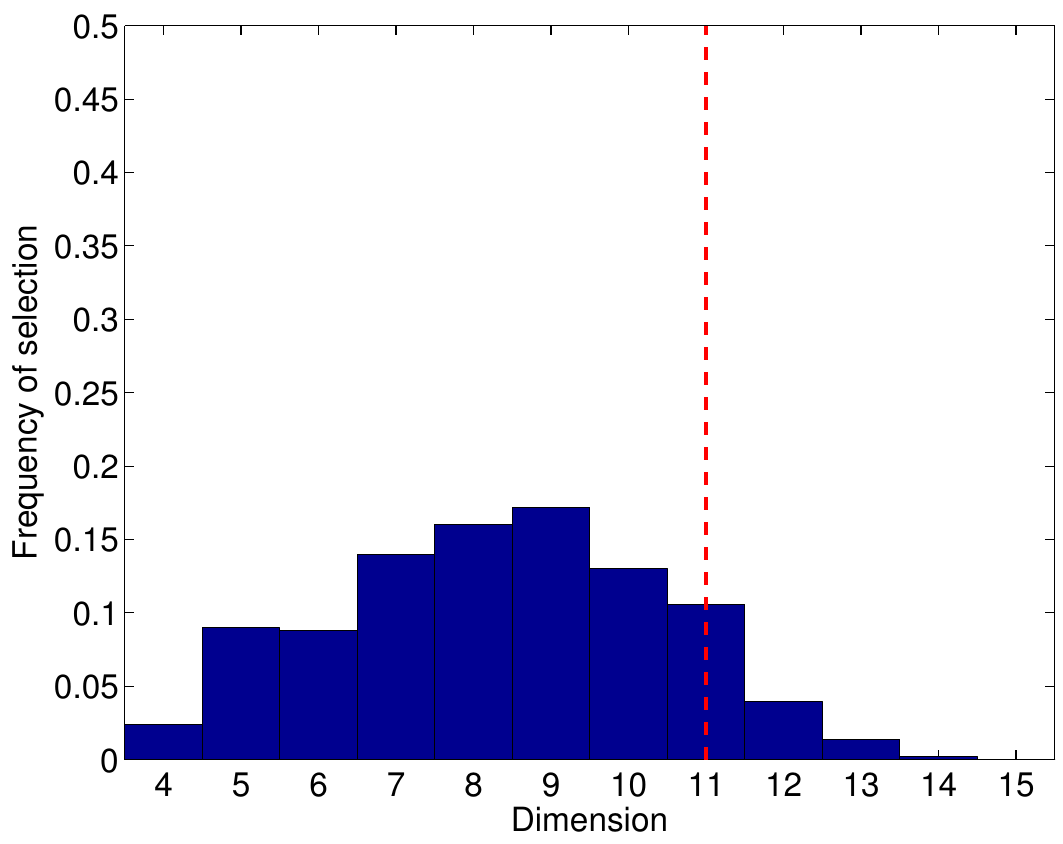}
}
 \hspace*{0.01\textwidth}
  \subfloat[$k=\kGau_{0.16}$]{\label{fig.Sc2.kGau-Dh}
\includegraphics[width = \figtroiswidth]{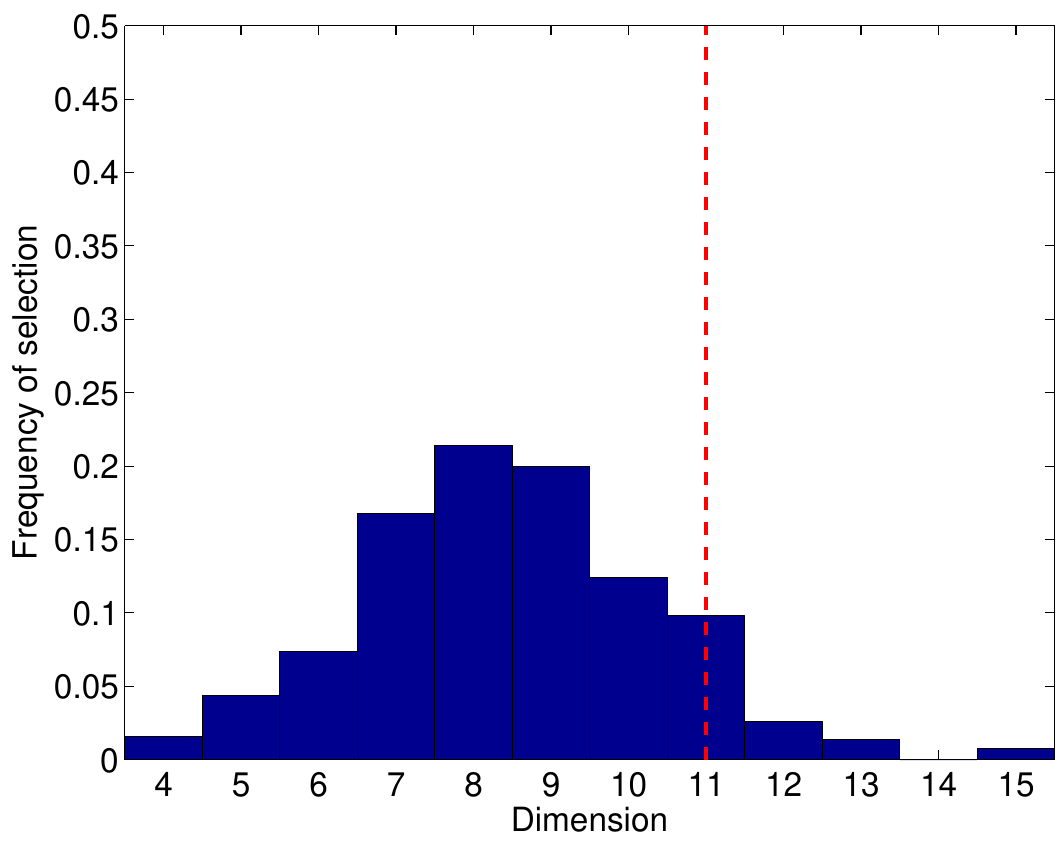}
}
\caption{%
Scenario 2: $\X=\R$, constant mean and variance. 
KCP with three different kernels~$k$.  
Distribution of~$\Dh$.
}
\label{fig.Sc2.Dh}
\end{figure}

\clearpage


\begin{figure}
  \centering
  \subfloat[$k=\kchi_{0.1}$]{\label{fig.Sc3.chi2-freqDh} 
\includegraphics[width = .45\textwidth]{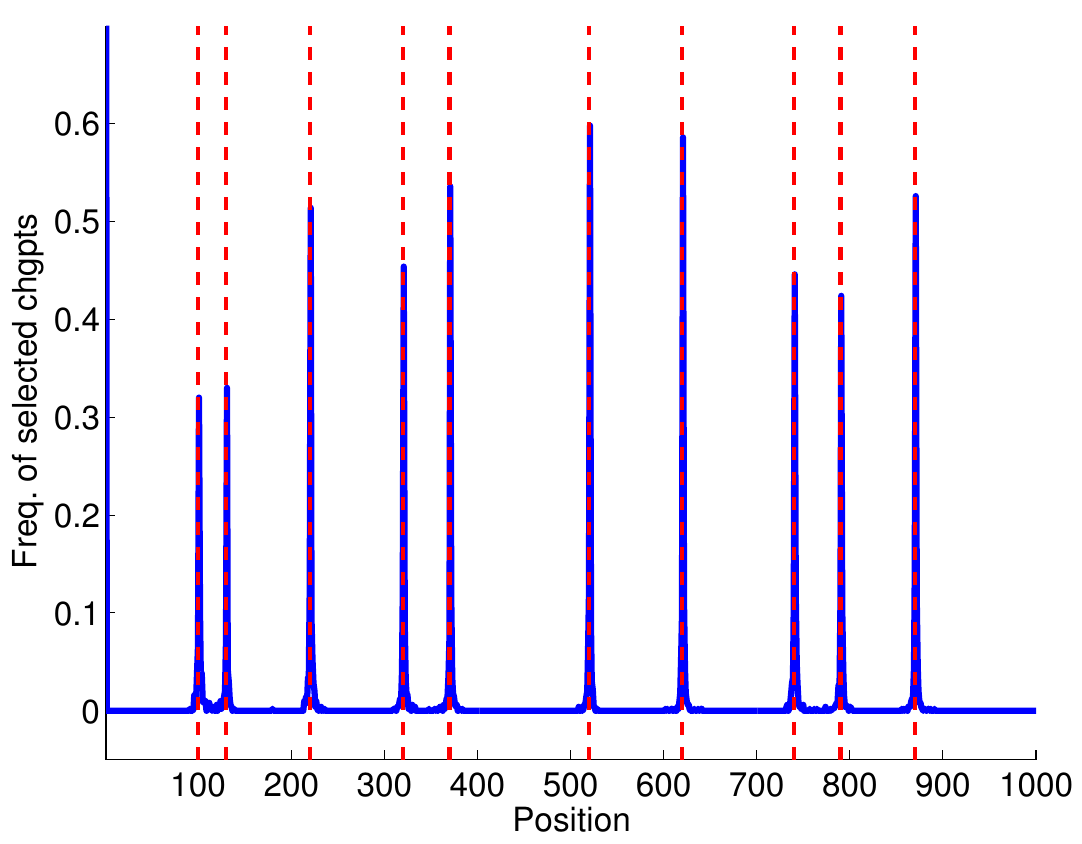}
}
 \hspace*{0.01\textwidth}
  \subfloat[$k=\kGau_{1}$]{\label{fig.Sc3.Gau-freqDh}
\includegraphics[width = .45\textwidth]{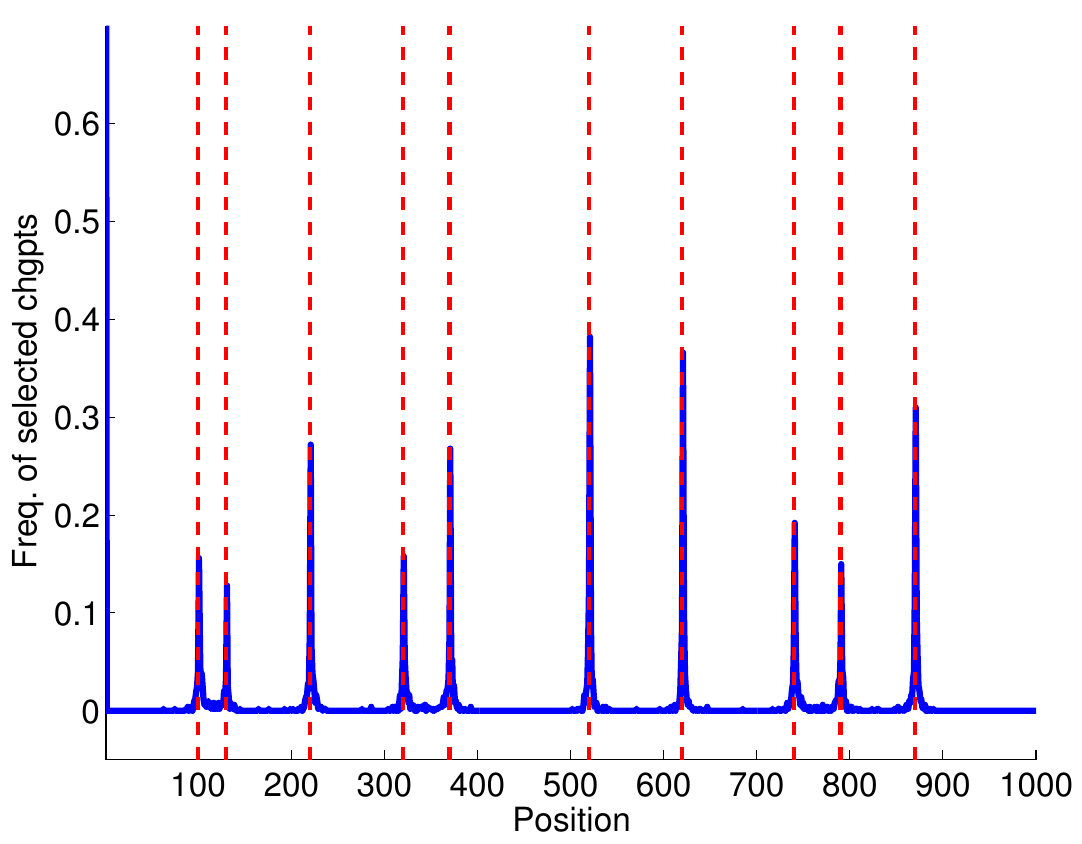}
}
  \caption{
Scenario~3: histogram-valued data. 
Performance of KCP with two different kernels. 
Probability, for each instant $i \in \sets{1, \ldots, n}$, that $\tauh = \tauh(\Dh)$ puts a change-point at $i$.
}
  \label{fig.Sc3.freq-Dh} 
\end{figure}

\clearpage


\begin{figure}[h!]
	\centering
	\figtroishspace
	\subfloat[Scenario~1, $k=\kGau_{0.1}$]{\label{fig.Sc1.penlin-Dh} 
		\includegraphics[width = \figtroiswidth]{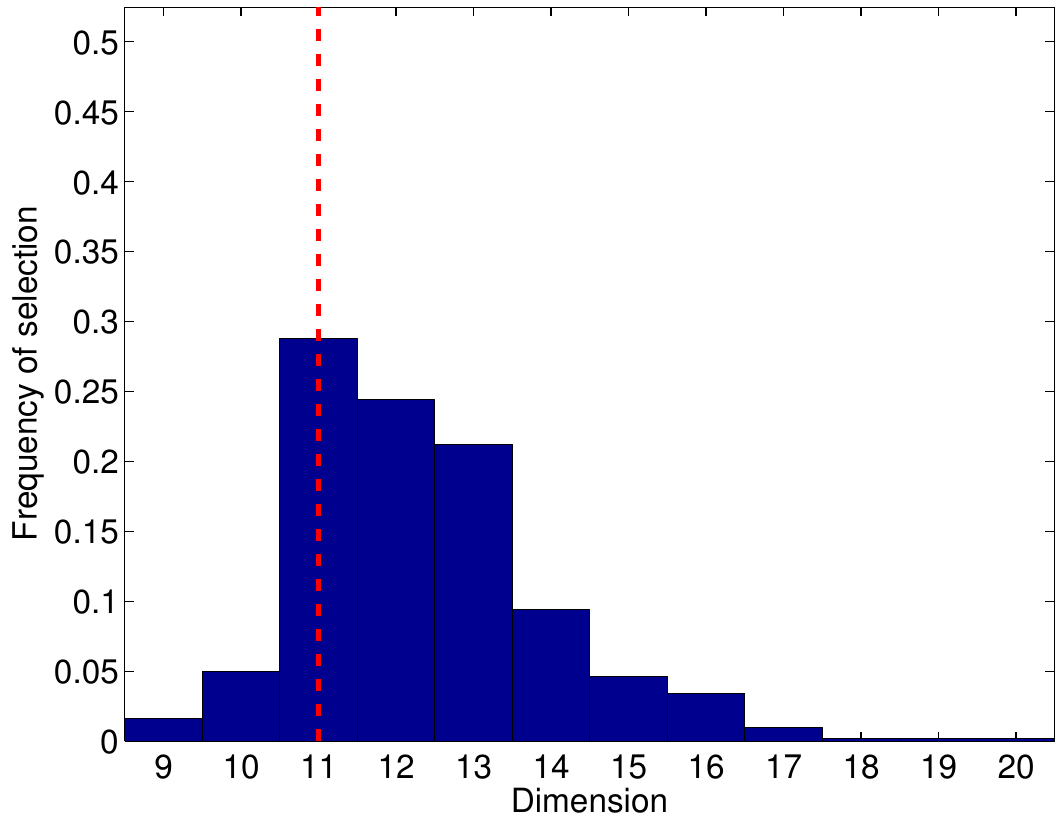}
	}
	\hspace*{0.01\textwidth}
	\subfloat[Scenario~2, $k=\kGau_{0.16}$]{\label{fig.Sc2.penlin-Dh} 
		\includegraphics[width = \figtroiswidth]{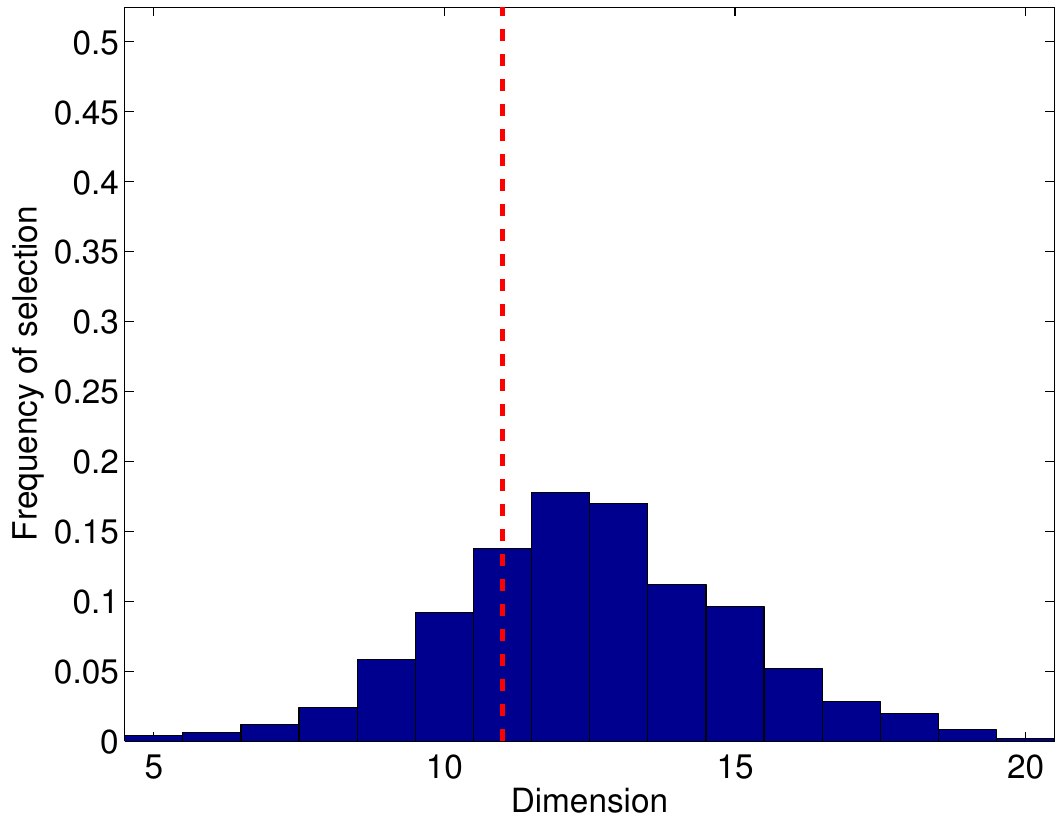}
	}
	\hspace*{0.01\textwidth}
	\subfloat[Scenario~3, $k=\kchi_{0.1}$]{\label{fig.Sc3.penlin-Dh} 
		\includegraphics[width = \figtroiswidth]{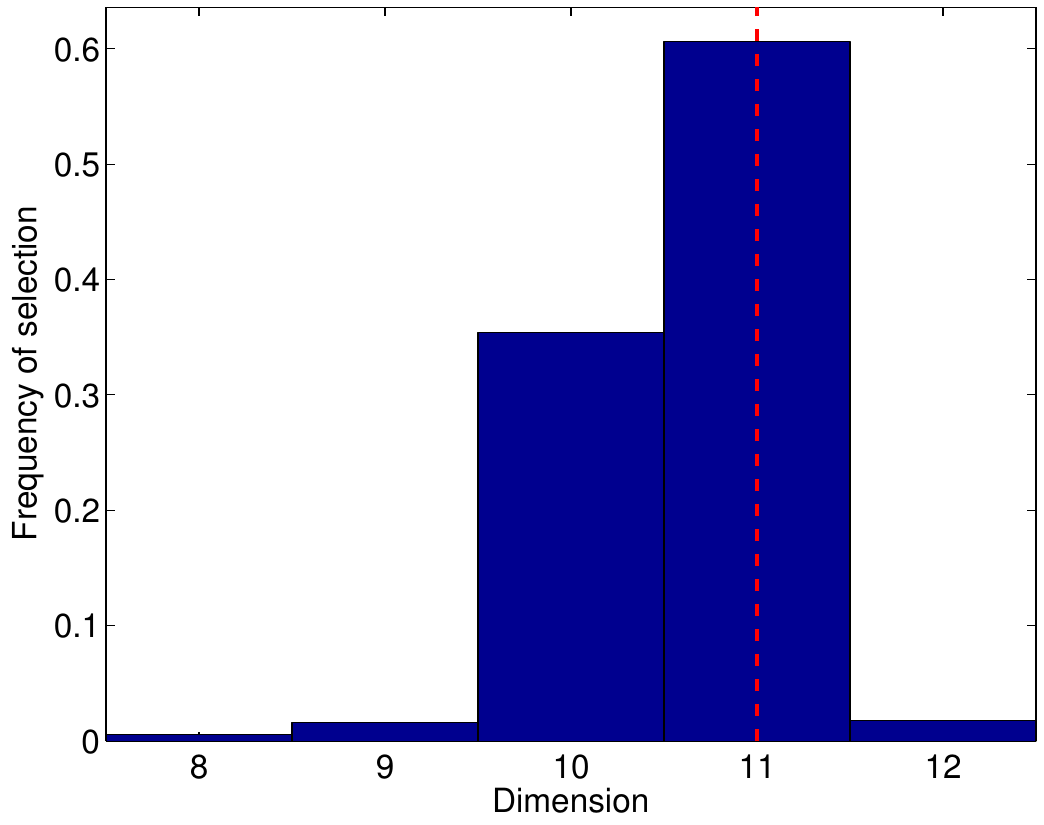}
	}
	\caption{%
	KCP with a linear penalty (see Section~\ref{sec.synthetic.data.results}): 
	distribution of $\Dh$.
	}
	\label{fig.LinearPenalty.Dh}
\end{figure}

\begin{figure}[h!]
	\centering
	\figtroishspace
	\subfloat[Scenario~1, $k=\kGau_{0.1}$]{\label{fig.Sc1.penlin-freq} 
		\includegraphics[width = \figtroiswidth]{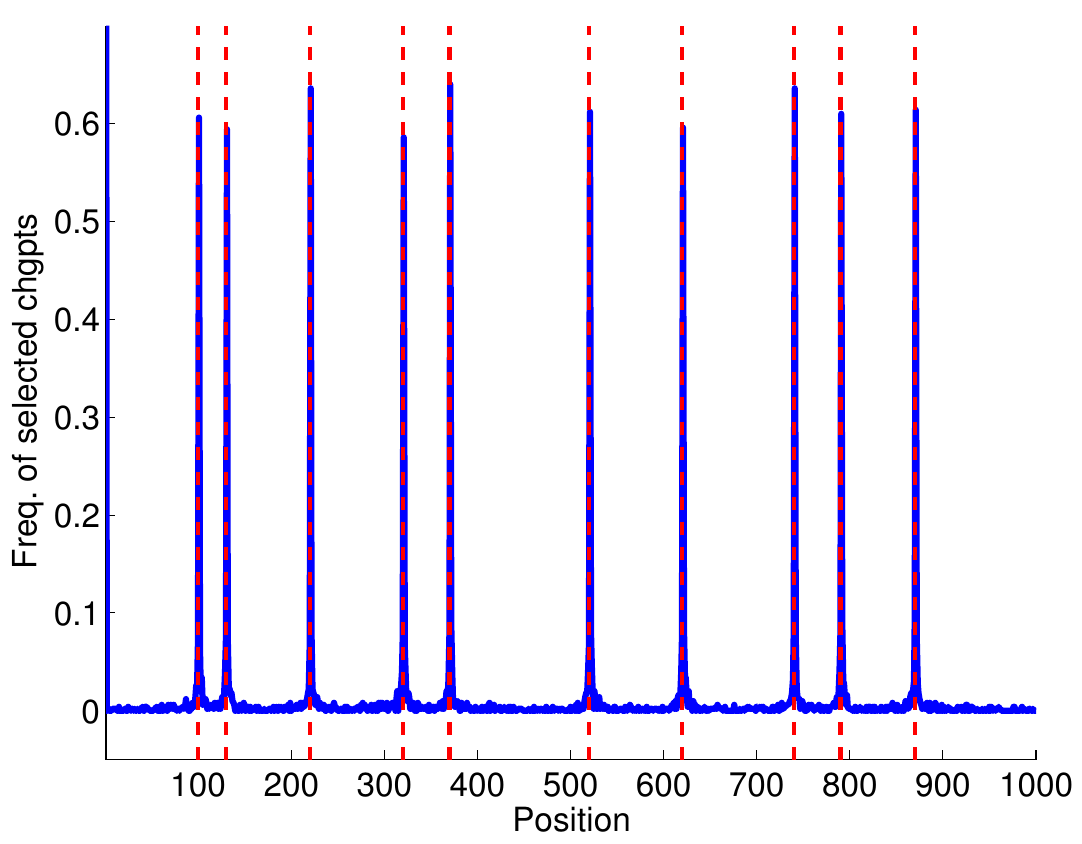}
	}
	\hspace*{0.01\textwidth}
	\subfloat[Scenario~2, $k=\kGau_{0.16}$]{\label{fig.Sc2.penlin-freq} 
		\includegraphics[width = \figtroiswidth]{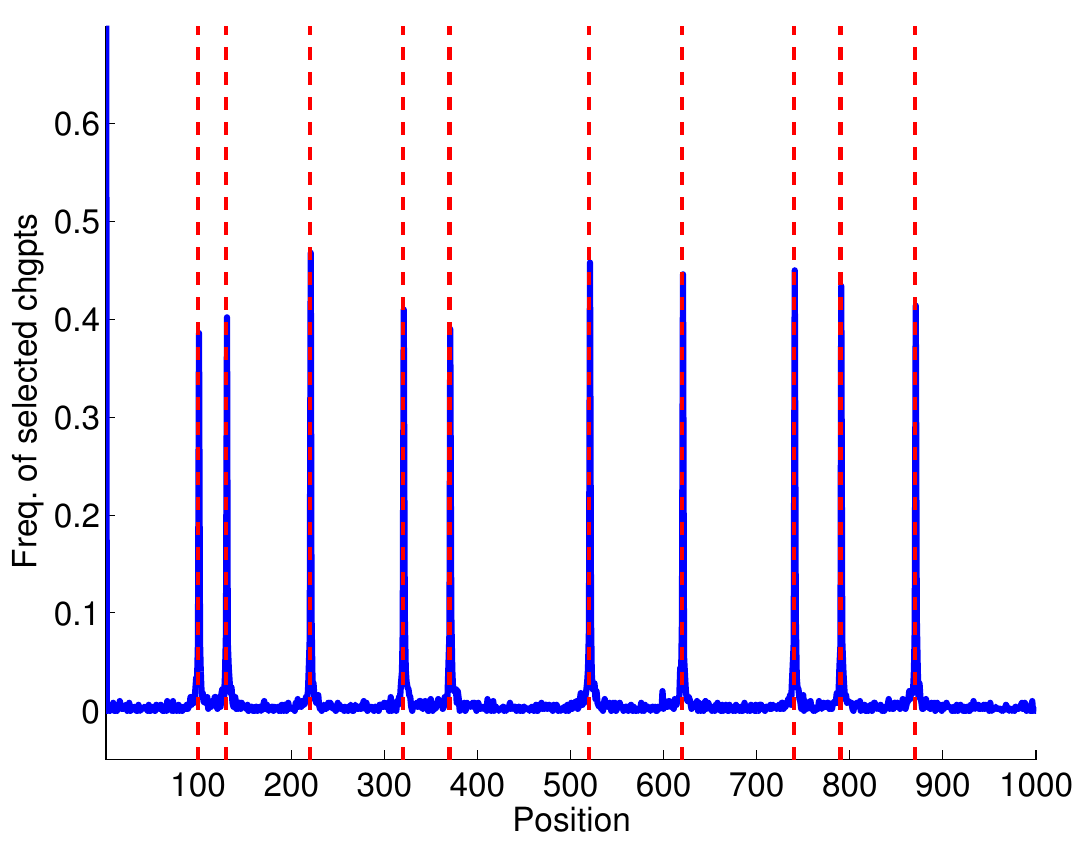}
	}
	\hspace*{0.01\textwidth}
	\subfloat[Scenario~3, $k=\kchi_{0.1}$]{\label{fig.Sc3.penlin-freq} 
		\includegraphics[width = \figtroiswidth]{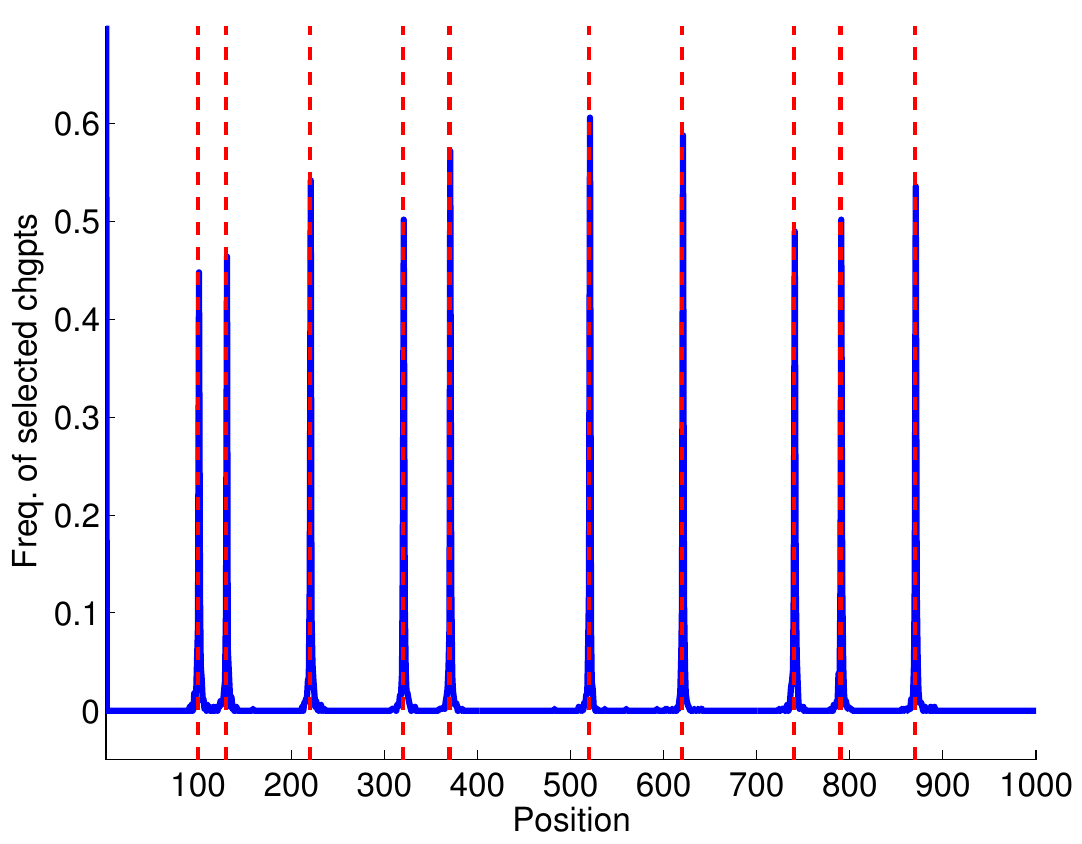}
	}
	\caption{%
	KCP with a linear penalty (see Section~\ref{sec.synthetic.data.results}): 
	Probability, for each instant $i \in \sets{1, \ldots, n}$, that $\tauh = \tauh(\Dh)$ puts a change-point at $i$.
	}
	\label{fig.LinearPenalty.freq}
\end{figure}

\clearpage


\begin{figure}[h!]
	\centering
	\subfloat[Scenario~1]{\label{fig.Sc1.EDivisive-Dh} 
		\includegraphics[width = .45\textwidth]{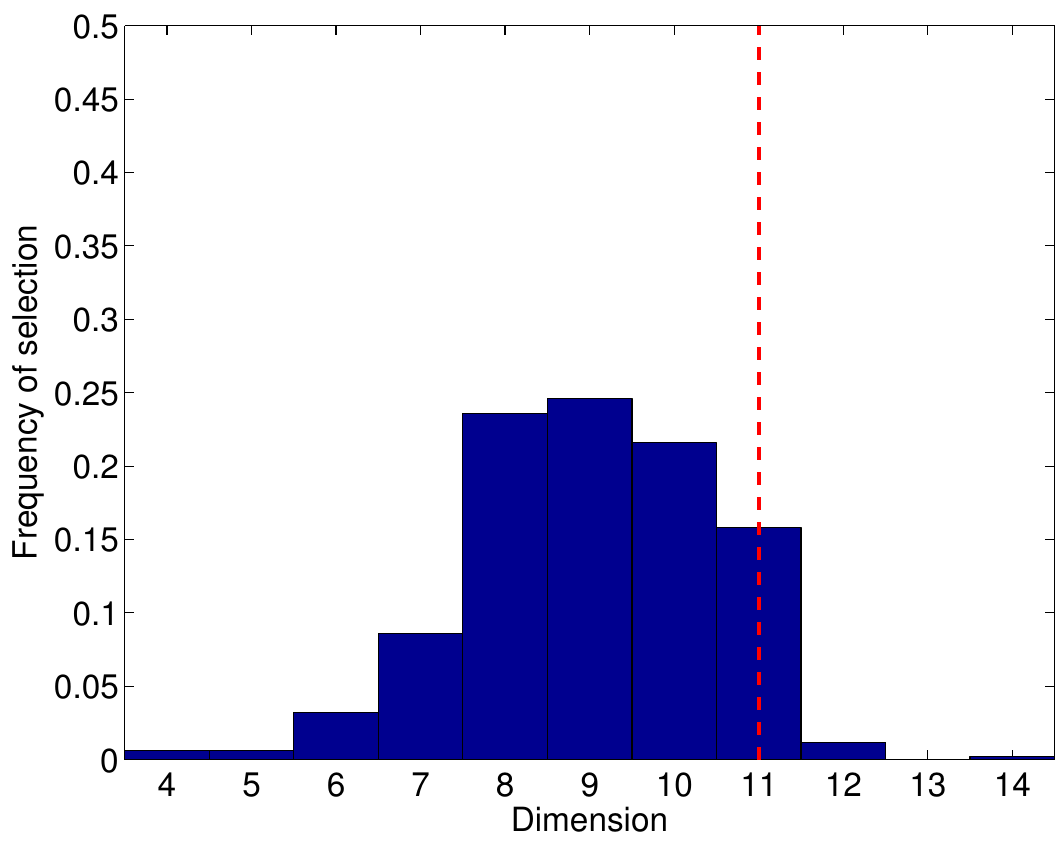}
	}
	\hspace*{0.01\textwidth}
	\subfloat[Scenario~2]{\label{fig.Sc2.EDivisive-Dh}
		\includegraphics[width = .45\textwidth]{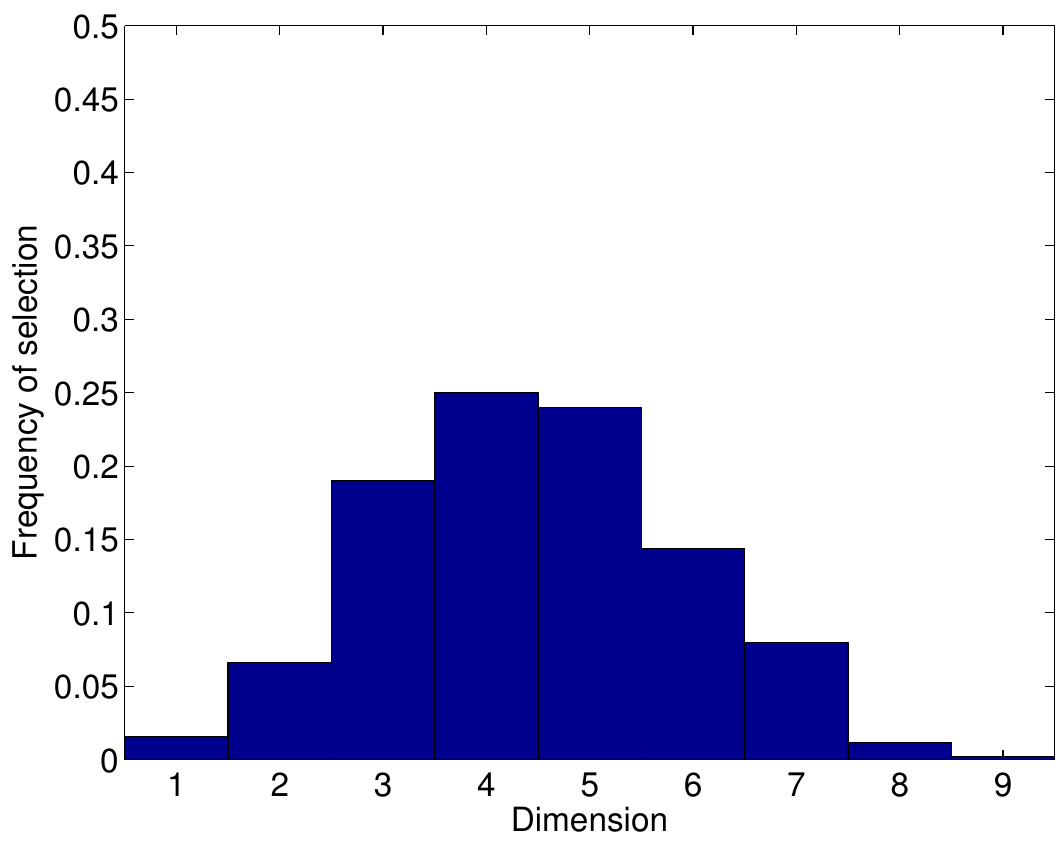}
	}
	\caption{
	E-divisive procedure (ED, see Section~\ref{sec.synthetic.data.results}) with type-I error level $\mathtt{sig.lvl}=0.05$, $\alpha=1$, and $R=199$: 
	Distribution of $\Dh$, the number of segments selected.
	}
	\label{fig.EDivisive-Dh} 
\end{figure}

\begin{figure}[h!]
	\centering
	\subfloat[Scenario~1]{\label{fig.Sc1.EDivisive-freqDh} 
		\includegraphics[width = .45\textwidth]{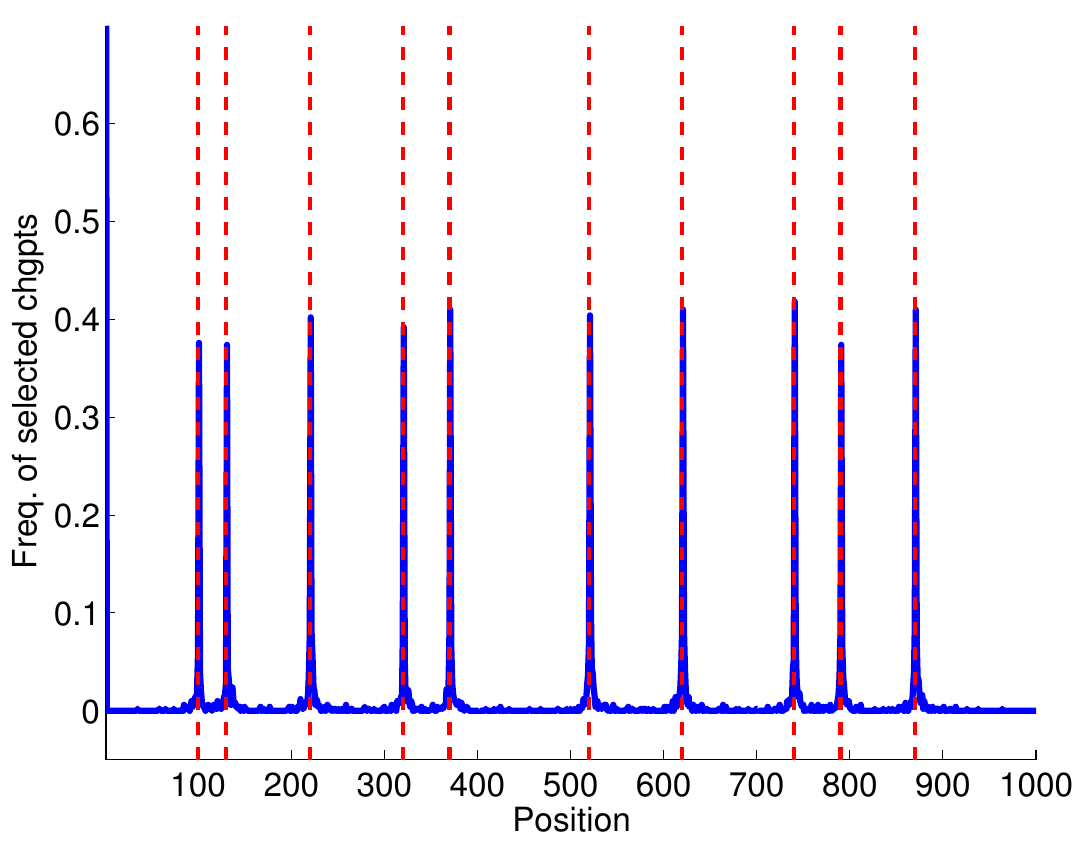}
	}
	\hspace*{0.01\textwidth}
	\subfloat[Scenario~2]{\label{fig.Sc2.EDivisive-freqDh}
		\includegraphics[width = .45\textwidth]{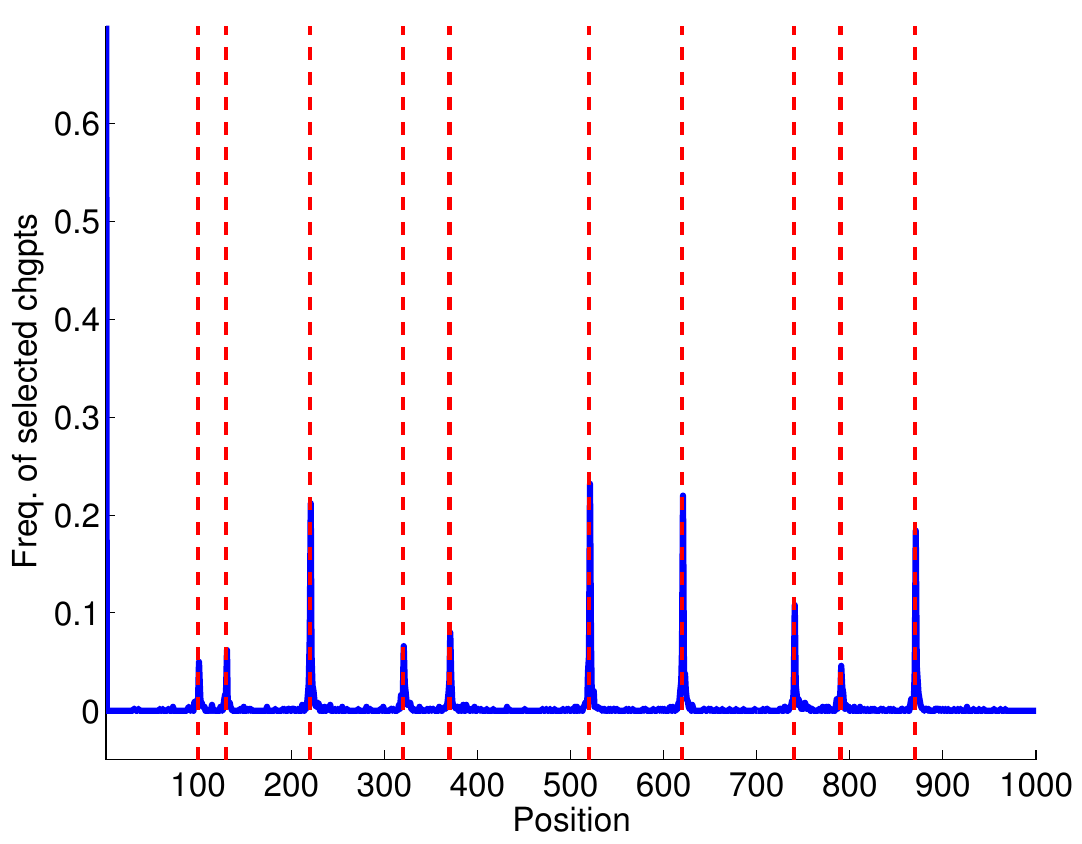}
	}
	\caption{
	E-divisive procedure (ED, see Section~\ref{sec.synthetic.data.results}) with type-I error level $\mathtt{sig.lvl}=0.05$, $\alpha=1$, and $R=199$: 
	Probability, for each instant $i \in \sets{1, \ldots, n}$, that $\tauh_{\mathrm{ED}}$ puts a change-point at $i$.
	}
	\label{fig.EDivisive.freq-Dh} 
\end{figure}

\begin{figure}[h!]
	\centering
	\subfloat[Scenario~1]{\label{fig.Sc1.EDivisive-freqDs} 
		\includegraphics[width = .45\textwidth]{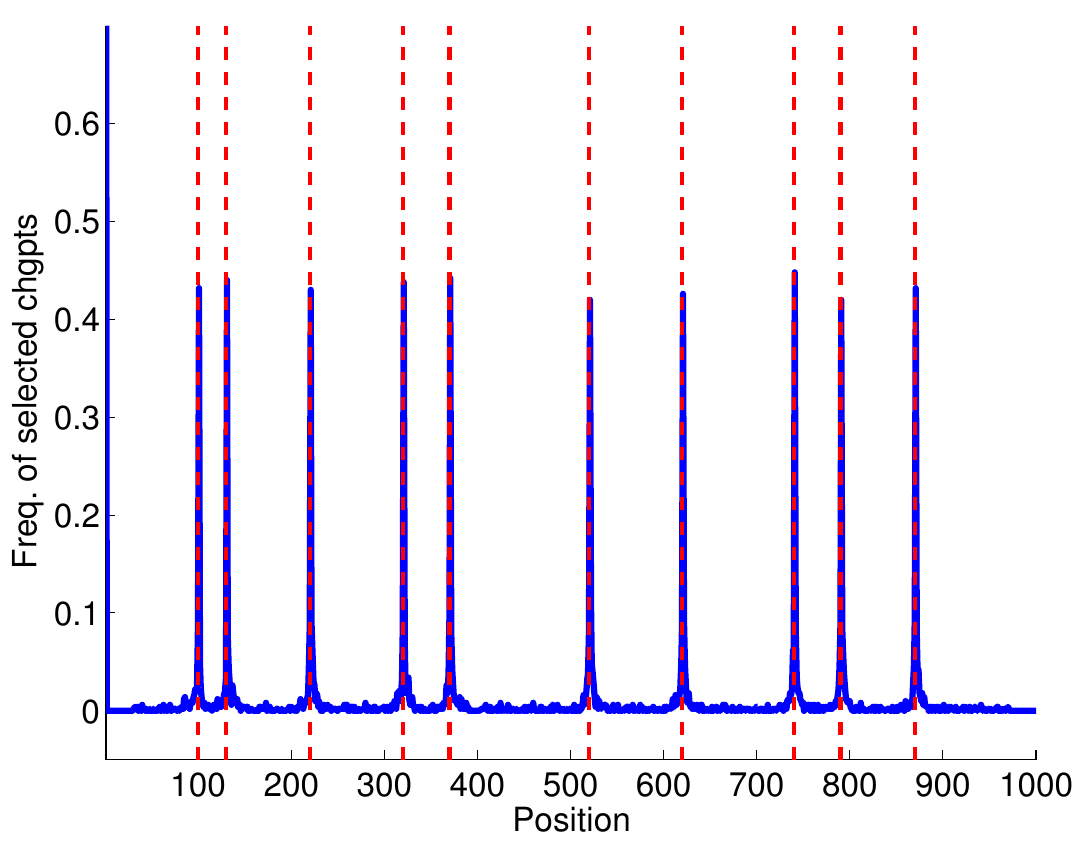}
	}
	\hspace*{0.01\textwidth}
	\subfloat[Scenario~2]{\label{fig.Sc2.EDivisive-freqDs}
		\includegraphics[width = .45\textwidth]{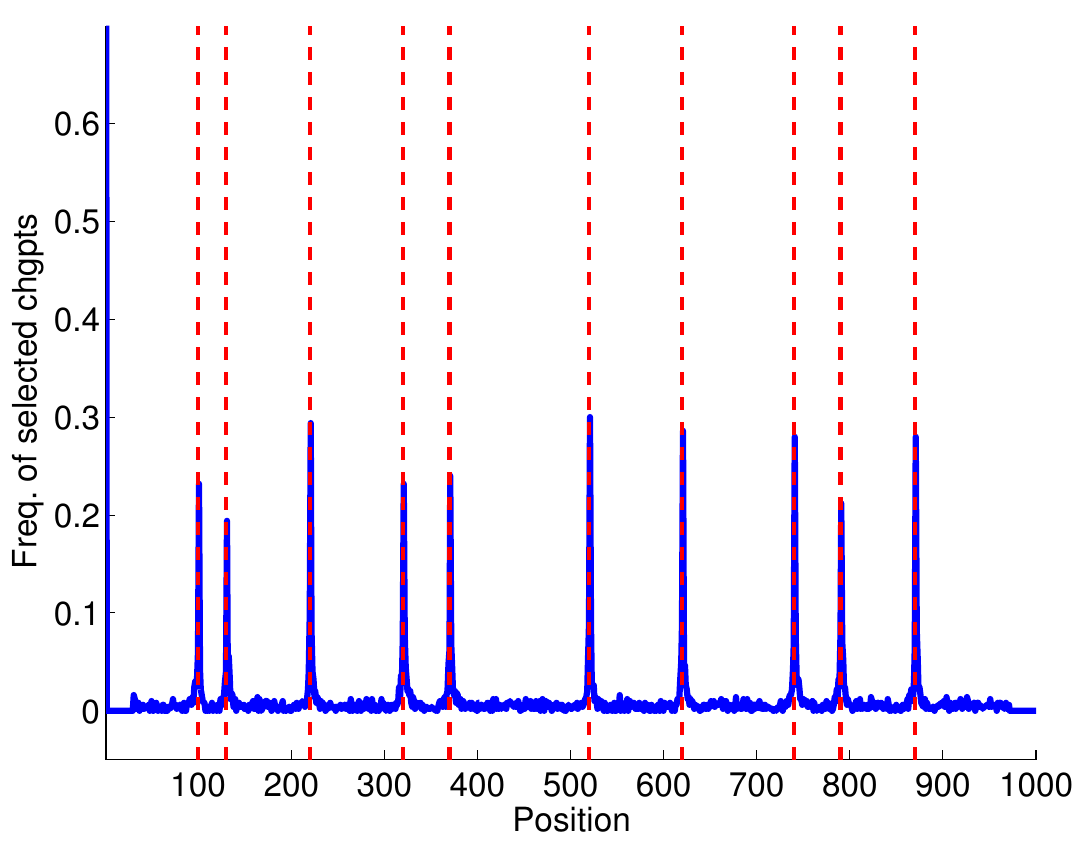}
	}
	\caption{
	E-divisive procedure (ED, see Section~\ref{sec.synthetic.data.results}) with $\alpha=1$ and $D=\Ds=11$ known: 
	Probability, for each instant $i \in \sets{1, \ldots, n}$, that $\tauh_{\mathrm{ED}} (\Ds) $ puts a change-point at $i$. 
	}
	\label{fig.EDivisive.freq-Ds} 
\end{figure}

\end{document}